\makeatletter \@addtoreset{equation}{section} \makeatother
\newtheorem{theorem}{Theorem}[section]
\newtheorem{definition}{Definition}[section]
\newtheorem{proposition}{Proposition}[section]
\newtheorem{lemma}{Lemma}[section]
\newtheorem{remark}{Remark}[section]
\newtheorem{corollary}[theorem]{Corollary}
\begin{document}
\title{Normalized ground states for the critical fractional NLS equation \\with a perturbation}
\author{ Maoding Zhen$^{a}$,  Binlin Zhang$^{b, }$\footnote{Corresponding author.
E-mail address:\, {\tt maodingzhen@163.com} (M. Zhen), \ {\tt zhangbinlin2012@163.com} (B. Zhang)}\\
\footnotesize $^a\,$School of Mathematics, Hefei University of Technology, Hefei 230009, P.R. China\\
\footnotesize $^b\,$College of Mathematics and Systems Science,
Shandong University of Science and Technology,
\\ \footnotesize Qingdao, 266590, P.R. China}
\date{ }
\maketitle
\begin{abstract}
{In this paper, we study normalized ground states for the following critical fractional NLS equation with prescribed mass:
\begin{equation*}
\begin{cases}

(-\Delta)^{s}u=\lambda u +\mu|u|^{q-2}u+|u|^{2_{s}^{\ast}-2}u,&x\in\mathbb{R}^{N},  \\

\int_{\mathbb{R}^{N}}u^{2}dx=a^{2},\\
\end{cases}
\end{equation*}
where $(-\Delta)^{s}$ is the fractional Laplacian, $0<s<1$, $N>2s$, $2<q<2_{s}^{\ast}=2N/(N-2s)$ is a fractional critical Sobolev exponent, $a>0$, $\mu\in \mathbb{R}$. By using Jeanjean's trick in \cite{Jeanjean}, and the standard method which can be found in \cite{Brezis} to overcome the lack of compactness, we first prove several existence and nonexistence results for a $L^{2}$-subcritical (or $L^{2}$-critical or $L^{2}$-supercritical) perturbation $\mu|u|^{q-2}u$, then we give some results about the behavior of the ground state obtained above  as $\mu\rightarrow 0^{+}$.
Our results extend and improve the existing ones in several directions.
}\medskip

\emph{\bf Keywords:}  Fractional Laplacian; Critical exponent; Normalized ground state solution.\medskip

\emph{\bf 2010 Mathematics Subject Classification:} 35J20, 35B33,  58E05.
\end{abstract}

\section{Introduction and main results}
\hspace{0.6cm}
In this paper, we consider the following critical  nonlinear Schr\"{o}dinger equation involving the fractional Laplacian:
\begin{align}\label{int1}
(-\Delta)^{s}u=\lambda u +\mu|u|^{q-2}u+|u|^{2_{s}^{\ast}-2}u&\ \ x\in\mathbb{R}^{N},
\end{align}
and possessing prescribed mass
\begin{align}\label{in2}
 \int_{\mathbb{R}^{N}}u^{2}dx=a^{2},
\end{align}
where $(-\Delta)^{s}$ is the fractional Laplacian, $0<s<1,\ 2<q<2_{s}^{\ast}=2N/(N-2s)$ is a fractional critical Sobolev exponent.
The fractional Laplacian\ $(-\Delta)^{s}$\ is defined by
$$
(-\Delta)^{s}u(x)=C(N,s) {\rm P.V.}\int_{\mathbb{R}^{N}}\frac{u(x)-u(y)}{|x-y|^{N+2s}}dy,\ \ x\in\mathbb{R}^{N}
$$
for $u \in C^\infty_0(\mathbb{R}^{N})$, where $C(N,s)$ is a suitable positive normalizing constant and P.V. denotes the Cauchy principle value. We refer to \cite{DPV, MRS, GDGC,FV14,EV09, BV16} for a simple introduction to basic properties of the fractional Laplace operator and concrete applications based on variational methods.

Our main driving force for the study of
\eqref{int1} arises in the study of the following time-dependent fractional Schr\"odinger
equation with combined power nonlinearities:
\begin{align}\label{eq1.1}
i\psi_t-(-\Delta)^{s}\psi+\mu|\psi|^{q-2}\psi+|\psi|^{2_{s}^{\ast}-2}\psi=0\ \ \mbox{in}\ \mathbb{R}^{N}.
\end{align}
When searching for stationary
waves of the form $\psi(t,x)=e^{-i\lambda t}u(x)$, where $\lambda\in \mathbb{R}$ is the chemical
potential and $u(x):\mathbb{R}^N\rightarrow \mathbb{C}$ is a time-independent function, one
is led to studying \eqref{int1}.
In this case, particular attention is paid to {\it ground state solutions}, a.e., solutions minimizing an energy functional
 among all non-trivial solutions. An alternative choice is to look for solutions to \eqref{int1} having prescribed mass,
and in this case $\lambda \in \mathbb{R}$ is part of the unknown. This approach seems particularly meaningful from the physical
point of view, since, in addition to being a conserved quantity for the time dependent \eqref{eq1.1},
the mass has often an evident physical meaning, for example, it indicates the power supply
in nonlinear optics, or the total number of atoms in Bose-Einstein condensation. Moreover,
this approach gives a better insight of the properties of the stationary solutions
for \eqref{int1}, for example, stability or instability, see \cite{Soave} for more details.

The existence of normalized stationary states can be summarized as follows: given $a>0$ and $\mu\in \mathbb{R}$,
$2<q<2_s^*$, our aim is to find $(\lambda, u)\in \mathbb{R}\times H^{s}(\mathbb{R}^N, \mathbb{C})$ such that
\eqref{int1} and \eqref{in2} hold.
 For the Laplacian case, i.e., $s=1$ in \eqref{int1}, we would like to mention a seminal paper by Jeanjean in \cite{Jeanjean}, which dealt with the existence of normalized solutions when the energy function is unbounded from below on the $L^{2}$ constraint.
 In fact, the  normalized solutions for nonlinear Schr\"{o}dinger equation or system have attracted much attention in recent years, both for their interesting theoretical structure and their concrete applications (see \cite{BJJN,TBNS,TBNSS,Soave,Soave1} and references therein). Since $\lambda$ and $\mu$  are parts of the unknown, the Nehari manifold method is not available in the framework of normalized solutions. Meanwhile, the appearance of the  $L^{2}$ constraint makes some classical methods, used to prove the boundedness of any Palais-Smale sequence for the unconstrained problem, difficult to implement. It is well known that a new $L^{2}$-critical exponent $\widetilde{p}=2+4/N$ plays a special role. Indeed, if the problem is $L^{2}$-subcritical, i.e., $2<q<p<\widetilde{p}$, the energy  functional $E_{\mu}$ (defined in \eqref{eq1.2}) is bounded from below on the constraint $\overline{S}_{a}=\{u\in H^1(\mathbb{R}^N, \mathbb{C}): \int_{\mathbb{R}^{N}}u^{2}dx=a^{2}\}$, so the ground state solution can be found as global minimizers of $E_{\mu}|_{\overline{S}_{a}}$. Moreover, if the problem is $L^{2}$-supercritical, i.e., $\widetilde{p}<q<p<2^{\ast}=2N/(N-2)$, then the energy  functional $E_{\mu}$ is unbounded both from
above and from below on $\overline{S}_{a}$. In this case, the ideas introduced by  Jeanjean in \cite{Jeanjean}  can be employed to consider the existence of normalized solutions for any $a,\mu >0.$

 Compared to the semilinear case that corresponds to the Laplace operator, the fractional Laplacian problems are nonlocal and more challenging. For fractional Laplacian equations or systems with fixed $\lambda_{i}$, the existence and non-degeneracy of solutions have been studied by a lot of researchers and there are many results about existence, nonexistence, multiplicity of solutions for fractional Laplacian equation, since it seems almost impossible for us to provided a complete list of references, we just refer the readers to  \cite{CS1,CS2,BCSS,CRS,CDS2,CT,GLZ,XSZ,HPP,ZZ1,ZZ2,XZR,RLFL,FLS} and references therein.

Recently, Soave in \cite{Soave, Soave1} first investigated the existence and properties of ground states for the nonlinear Schr\"{o}dinger equation  with combined power type nonlinearities and also gave new criteria for global existence and finite time blow-up in the associated dispersive equation.
More precisely, Soave in \cite{Soave} considered the normalized solutions for subcritical exponent and give a complete classification about the existence and nonexistence of normalized solution for $L^{2}$-subcritical, $L^{2}$-critical and $L^{2}$-supercritical. For the critical case, the problem is also interesting and challenging. By focusing the leading nonlinearity and analysing how the introduction of lower order term modifies the energy functional structure,  Soave in \cite{Soave1} obtained the existence and nonexistence of normalized solutions for $L^{2}$-subcritical, $L^{2}$-critical and $L^{2}$-supercritical in the Sobolev critical case. Due to the lack of compactness of the Sobolev embedding $H^{1}(\mathbb{R}^{N})\hookrightarrow L^{2^{\ast}}(\mathbb{R}^{N})$, the problem is more complicated, however, the difficulty was overcome ingeniously by combining some ideas from \cite{Brezis} and \cite{Jeanjean}.

Inspired by the above-mentioned works, especially by \cite{Soave, Soave1}, in the present paper our goal is two-fold.
One is to show the existence and nonexistence of normalized ground states for fractional elliptic equations with critical exponent. Another  is to give  some results about the behavior of  ground state solutions obtained above  as $\mu\rightarrow 0^{+}$. The method we use is Jeanjean's method \cite{Jeanjean} combined with Pohozaev manifold argument. By using the test function as in \cite{SV1}, we show that the least energy of the equation is below the critical energy $\frac{s}{N}S^{N/(2s)}_{s}$ under the proper conditions given on $N, s,p,\lambda$ under which the Palais-Smale condition is satisfied. The main difficulty is to prove the convergence of constrained Palais-Smale sequence. Indeed,  if we find a bounded Palais-Smale sequence, according to the compactness of the embedding $H^{s}_{rad}(\mathbb{R}^{N})\hookrightarrow L^{p}(\mathbb{R}^{N}),\ 2<p<2_{s}^{\ast}$, we just get a strongly convergent subsequence in $L^{p}(\mathbb{R}^{N})$, but we cannot deduce the strong convergence in $L^{2}(\mathbb{R}^{N}).$ Hence we require new arguments to overcome the lack of compactness of the embedding $H^{s}_{rad}(\mathbb{R}^{N})\hookrightarrow L^{2}(\mathbb{R}^{N}).$ To this end, we adopt some ideas of \cite{PFAQ} to obtain a Liouville-type result.

Before we state our main results, we first introduce some notations. Let ${H^{s}(\mathbb{R}^{N})}$ be the Hilbert space of function in $\mathbb{R}^N$ endowed with the standard inner product and norm
$$\langle u,v\rangle=\int_{\mathbb{R}^{N}}\left((-\Delta)^{\frac{s}{2}} u(-\Delta)^{\frac{s}{2}}v+uv\right)dx,\ \ \|u\|_{H^{s}(\mathbb{R}^{N})}^{2}=\langle u,u\rangle.$$

Let $D_{s}(\mathbb{R}^N)$ be the Hilbert space defined as the completion of $C_{c}^{\infty}(\mathbb{R}^N)$ with the inner product
$$
\langle u,v\rangle_{D_{s}(\mathbb{R}^N)}=\frac{C(N,s)}{2}\iint_{\mathbb{R}^{2N}}\frac{(u(x)-u(y))(v(x)-v(y))}{|y-x|^{N+2s}}dxdy
$$and norm
$$
\|u\|^{2}_{D_{s}(\mathbb{R}^{N})}=\int_{\mathbb{R}^{N}}|(-\Delta)^{\frac{s}{2}}u|^{2}dx=\frac{C(N,s)}{2}\iint_{\mathbb{R}^{2N}}\frac{|u(x)-u(y)|^{2}}{|y-x|^{N+2s}}dxdy.
$$

The energy functional associated with \eqref{int1} and the constraint are given by
\begin{align}\label{eq1.2}
E_{\mu}(u)&=\frac{1}{2}||u||^{2}_{D_{s}(\mathbb{R}^{N})}
-\frac{\mu}{q}\int_{\mathbb{R}^{N}}|u|^{q}dx-\frac{1}{2_{s}^{\ast}}\int_{\mathbb{R}^{N}}|u|^{2_{s}^{\ast}}dx
\end{align}
and
$$S_{a}=\left\{u\in H^s(\mathbb{R}^N, \mathbb{C}): \int_{\mathbb{R}^{N}}u^{2}dx=a^{2}\right\}.$$

Let $S_s$ be the sharp imbedding constant of ${D_{s}(\mathbb{R}^{N})}\hookrightarrow L^{2^{\ast}}(\mathbb{R}^{N}),$
\begin{align}\label{int2}
S_{s}=\inf \limits_{u\in {D_{s}(\mathbb{R}^{N})})\setminus \{0\}}\frac{\|u\|^{2}_{D_{s}(\mathbb{R}^{N})}}{(\int_{\mathbb{R}^{N}}|u|^{2^{\ast}}dx)^{\frac{2}{2^{\ast}}}}.
\end{align}
From \cite{CT} we know that  $S_{s}$\ is attained in\ $\mathbb{R}^{N}$ by
\begin{equation}\label{G1}
U_{\epsilon,x_{0}}(x)=\kappa(\epsilon^{2}+|x-x_{0}|^{2})^{-\frac{N-2s}{2}}
\end{equation}
 where $\kappa\neq 0 \in \mathbb{R},\ \varepsilon>0 $ are fixed constants and $x_{0}\in
 \mathbb{R}^{N}$.

To present our main results, put
\begin{align*}
\gamma_{q,s}=\frac{N(q-2)}{2qs},
\end{align*}
\begin{align}\label{equ2}
C'=\frac{q(2_{s}^{\ast}-2)}{2C^{q}_{N,q,s}(2_{s}^{\ast}-q\gamma_{q,s})}\left(\frac{(2-q\gamma_{q,s})2_{s}^{\ast}}{2(2_{s}^{\ast}-q\gamma_{q,s})}S^{\frac{2_{s}^{\ast}}{2}}_{s}\right)^{\frac{2-q\gamma_{q,s}}{2_{s}^{\ast}-2}},
\end{align}
\begin{align}\label{equ3}
C''=\frac{22_{s}^{\star}}{(2_{s}^{\star}-q\gamma_{q,s})C^{q}_{N,q,s}}\left(\frac{Nq\gamma^{2}_{q,s}S^{\frac{N}{2s}}_{s}}{(2-q\gamma_{q,s})s}\right)^{\frac{2-q\gamma_{q,s}}{2}}.
\end{align}
\begin{theorem}\label{TH1}
Let $N>2s,\ a,\mu>0$  and $2<q<\overline{p}:=2+4s/N$. If there exists a constant $\alpha=\alpha(N,q)>0$ such that
\begin{align}\label{equ4}
\mu a^{q(1-\gamma_{q,s})}<\alpha:=\min\{C', C''\},
\end{align}
then $E_{\mu}|_{S_{a}}$ has a ground state $\widetilde{u}$ with the following properties: $\widetilde{u}$ is a positive, radially symmetric function and solves \eqref{int1}--\eqref{in2} for some $\widetilde{\lambda}<0.$ Moreover, $m(a,\mu)<0$ and $\widetilde{u}$ is an interior local minimizer of $E_{\mu}(u)$ on the set $$A_{k}=\{u\in S_{a}: ||u||_{D_{s}(\mathbb{R}^{N})}< k\},$$ for suitable $k$ small enough. Any other ground state solution of $E_{\mu}$ on $S_{a}$ is a local minimizer of $E_{\mu}$ on $A_{k}$.
\end{theorem}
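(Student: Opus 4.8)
The plan is to follow the now-classical scheme for $L^2$-subcritical perturbations of a Sobolev-critical problem, adapting Soave's approach \cite{Soave1} to the fractional setting. The first step is to analyze the fibering map $t\mapsto E_\mu(t\star u)$, where $(t\star u)(x)=e^{Nt/2}u(e^{t}x)$ is the $L^2$-preserving dilation. Writing $\|t\star u\|_{D_s}^2=e^{2st}\|u\|_{D_s}^2$, $\int|t\star u|^q=e^{q\gamma_{q,s}st}\int|u|^q$ and $\int|t\star u|^{2_s^\ast}=e^{2_s^\ast st}\int|u|^{2_s^\ast}$, one sees that since $2<q<\overline p$ we have $q\gamma_{q,s}<2$, so the middle term is subcritical in $t$; using the fractional Gagliardo--Nirenberg inequality with constant $C_{N,q,s}$ and the Sobolev inequality \eqref{int2}, one shows that under the smallness condition \eqref{equ4} on $\mu a^{q(1-\gamma_{q,s})}$ the function $g(u):=\|u\|_{D_s}^2$ restricted to $S_a$ decomposes into a region where $E_\mu<0$ for small $\|u\|_{D_s}$ and a barrier: there exist $0<k_0<k_1$ such that $E_\mu(u)>0$ whenever $\|u\|_{D_s}=k_1$, while $\inf\{E_\mu(u):\|u\|_{D_s}\le k_0\}<0$. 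This is exactly the mountain-pass-free geometry that produces a local minimizer inside $A_k$ for $k\in(k_0,k_1)$; define $m(a,\mu)=\inf_{A_{k}}E_\mu$.

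Next I would run the minimization. Take a minimizing sequence $(u_n)\subset A_k$; by the Schwarz-symmetrization / Pólya--Szegő inequality for $D_s$ one may assume each $u_n$ is nonnegative and radially decreasing, hence $(u_n)\subset H^s_{rad}(\mathbb R^N)$ is bounded, and up to a subsequence $u_n\rightharpoonup \widetilde u$ weakly in $H^s$, strongly in $L^q$ by the compact embedding $H^s_{rad}\hookrightarrow L^q$, and a.e. The strict inequality $m(a,\mu)<0$ rules out $\widetilde u=0$ (a zero weak limit would force $\|u_n\|_{D_s}\to0$ along a subsequence and then $E_\mu(u_n)\to0$). With $\widetilde u\neq0$ one uses the Brezis--Lieb lemma on both $\int|u_n|^{2_s^\ast}$ and $\|u_n\|_{D_s}^2$ together with the Sobolev inequality to compare $E_\mu(u_n)$ with $E_\mu(\widetilde u)+\frac12\|u_n-\widetilde u\|_{D_s}^2-\frac1{2_s^\ast}\int|u_n-\widetilde u|^{2_s^\ast}$; since $m(a,\mu)<0<\frac{s}{N}S_s^{N/(2s)}$ and the possible "bubble" would contribute at least $\frac{s}{N}S_s^{N/(2s)}$ to the energy, the vanishing of the Sobolev-critical defect is forced, giving $u_n\to\widetilde u$ strongly in $D_s$. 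Strong $L^2$ convergence then follows from $\widetilde u\in S_a$ (the mass cannot be lost: $\|\widetilde u\|_2\le a$ always, and if it were strictly less one could rescale to lower the energy, contradicting minimality — alternatively the mass of the vanishing part is controlled by its $D_s$-norm which has gone to zero). Hence $\widetilde u$ is a minimizer; it lies in the interior of $A_k$ because on the boundary $\|u\|_{D_s}=k\in(k_0,k_1)$ one has $E_\mu>0>m(a,\mu)$.

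Then $\widetilde u$ is a genuine critical point of $E_\mu|_{S_a}$, so by the Lagrange multiplier rule there is $\widetilde\lambda\in\mathbb R$ with $(-\Delta)^s\widetilde u=\widetilde\lambda\widetilde u+\mu|\widetilde u|^{q-2}\widetilde u+|\widetilde u|^{2_s^\ast-2}\widetilde u$. Testing this equation with $\widetilde u$ and combining it with the Pohozaev identity for the fractional problem (valid since $\widetilde u$ is a solution; this eliminates the $\|\widetilde u\|_{D_s}^2$ and $\int|\widetilde u|^{2_s^\ast}$ terms appropriately) yields $\widetilde\lambda\,a^2 = -\,\frac{\mu(2-q\gamma_{q,s})}{q}\int|\widetilde u|^q<0$ because $q\gamma_{q,s}<2$ and $\mu>0$; hence $\widetilde\lambda<0$. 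Positivity follows from $\widetilde u\ge0$, $\widetilde u\not\equiv0$, the maximum principle for $(-\Delta)^s-\widetilde\lambda$ (since $-\widetilde\lambda>0$), and radial symmetry is inherited from the symmetrized minimizing sequence. Finally, "ground state" is justified by noting that any normalized solution of \eqref{int1} lies on the Pohozaev set, on which $E_\mu$ is bounded below by $m(a,\mu)$ restricted suitably — equivalently, any other ground state has energy $m(a,\mu)<0$, which forces $\|u\|_{D_s}<k_1$, and the geometry then shows it must sit in $A_k$ and be a local minimizer there.

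The main obstacle is the compactness step: recovering strong $L^2$ convergence. The compact embedding $H^s_{rad}\hookrightarrow L^p$ holds only for $2<p<2_s^\ast$, so it gives us nothing at the endpoints $p=2$ and $p=2_s^\ast$. The critical endpoint is handled by the energy-level argument ($m(a,\mu)$ below the first bubbling threshold $\frac{s}{N}S_s^{N/(2s)}$) via Brezis--Lieb, which is why establishing the sharp two-sided bounds on $m(a,\mu)$ in the first step — and in particular the \emph{strict} sign $m(a,\mu)<0$ — is delicate and is precisely where the quantitative hypothesis \eqref{equ4} with the explicit constants $C',C''$ gets used. The $L^2$ endpoint is where one may additionally need the Liouville-type argument alluded to in the introduction (following \cite{PFAQ}) to exclude a nontrivial noncompact piece carrying mass; for a local minimizer with negative energy, though, the sign of $\widetilde\lambda$ together with $\widetilde\lambda\in\mathbb R$ being bounded should suffice to pin the mass, so the bulk of the technical work is really the careful fibering/energy analysis producing the mountain-pass-free geometry and the strict energy inequality.
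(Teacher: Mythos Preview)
Your overall architecture (fibering geometry $\Rightarrow$ local minimum structure $\Rightarrow$ minimization under the threshold $\frac{s}{N}S_s^{N/(2s)}$) matches the paper, but the compactness step as you wrote it has a genuine gap, and you misplace the role of the constant $C''$.

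\medskip
\textbf{The gap.} For a \emph{raw} minimizing sequence $(u_n)\subset A_k$ you only get, after Br\'ezis--Lieb and $L^q$-compactness,
\[
m(a,\mu)=E_\mu(\widetilde u)+\tfrac12\|\sigma_n\|_{D_s}^2-\tfrac{1}{2_s^\ast}\int|\sigma_n|^{2_s^\ast}+o(1),\qquad \sigma_n:=u_n-\widetilde u.
\]
Your assertion that ``the bubble contributes at least $\frac{s}{N}S_s^{N/(2s)}$'' is \emph{not} true here: Sobolev alone gives $\tfrac12\ell_1-\tfrac{1}{2_s^\ast}\ell_2\ge \tfrac12\ell_1-\tfrac{1}{2_s^\ast}S_s^{-2_s^\ast/2}\ell_1^{2_s^\ast/2}$, which for small $\ell_1>0$ is small and positive, not $\ge \frac{s}{N}S_s^{N/(2s)}$. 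The dichotomy $\ell=0$ or $\ell\ge S_s^{N/(2s)}$ that you invoke requires $\|\sigma_n\|_{D_s}^2-\int|\sigma_n|^{2_s^\ast}\to 0$, and that in turn needs $P_\mu(u_n)\to 0$. The paper obtains this extra information by (i) first projecting the minimizing sequence onto $\mathcal P^+_{a,\mu}$ via $w_n:=t_{v_n}\star v_n$ (Lemma~\ref{Lem4}), so that $P_\mu(w_n)=0$ exactly; (ii) using Lemma~\ref{Lem7} to ensure $\|w_n\|_{D_s}<R_0-\rho$ strictly; (iii) applying Ekeland's principle to produce a Palais--Smale sequence $(u_n)$ with $\|u_n-w_n\|\to 0$, hence $P_\mu(u_n)\to 0$; and only then (iv) feeding $(u_n)$ into Proposition~\ref{LM5}/\ref{LM6}. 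Your sketch skips Ekeland and the Pohozaev projection entirely, and without them the threshold argument does not close. Likewise, your ruling out of $\widetilde u=0$ (``would force $\|u_n\|_{D_s}\to 0$'') is unjustified for the same reason.

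\medskip
\textbf{Where $C''$ is used.} You write that both $C'$ and $C''$ enter in establishing the mountain-pass-free geometry. In the paper only $C'$ does (Lemmas~\ref{Lem2}--\ref{Lem3}). The constant $C''$ is used at a completely different place: after Proposition~\ref{LM5} hands you the alternative ``either strong convergence, or $E_\mu(u)\le m(a,\mu)-\frac{s}{N}S_s^{N/(2s)}$ for a limit $u$ solving \eqref{int1} with $P_\mu(u)=0$'', the paper shows (via $P_\mu(u)=0$, Gagliardo--Nirenberg, and a one-variable minimization) that
\[
E_\mu(u)+\tfrac{s}{N}S_s^{N/(2s)}\ \ge\ \tfrac{s}{N}S_s^{N/(2s)}+\min_{t>0}\Big[\tfrac{s}{N}t^2-\tfrac{\mu}{q}\big(1-\tfrac{q\gamma_{q,s}}{2_s^\ast}\big)C_{N,q,s}^q a^{q(1-\gamma_{q,s})}t^{q\gamma_{q,s}}\Big],
\]
and it is precisely the condition $\mu a^{q(1-\gamma_{q,s})}<C''$ that makes the right-hand side strictly positive, contradicting $m(a,\mu)<0$ and killing the non-compact alternative. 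Without isolating this step you cannot explain why $C''$ appears at all.

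\medskip
Two minor points: your formula $\widetilde\lambda a^2=-\frac{\mu(2-q\gamma_{q,s})}{q}\int|\widetilde u|^q$ is off; subtracting the tested equation from Pohozaev gives $\widetilde\lambda a^2=\mu(\gamma_{q,s}-1)\int|\widetilde u|^q<0$. And your fallback for $L^2$-compactness (``mass of the vanishing part is controlled by its $D_s$-norm'') is false --- there is no such inequality on $\mathbb R^N$; in the paper, strong $L^2$-convergence comes \emph{after} strong $D_s$-convergence by testing the equation and the limit equation against $u_n-u$ and using $\widetilde\lambda<0$ (end of the proof of Proposition~\ref{LM5}).
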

\begin{theorem}\label{TH2}
Let $N>2s,\ a,\mu>0$  and $2<q=\overline{p}$. If
\begin{align}\label{U200}
\mu a^{\frac{4s}{N}}< \overline{p}\left(2C^{\overline{p}}_{N,\overline{p},s}\right)^{-1},
\end{align}
then $E_{\mu}|_{S_{a}}$ has a ground state $\widetilde{u}$ with the following properties: $\widetilde{u}$ is a positive, radially symmetric function and solves \eqref{int1}--\eqref{in2} for some $\widetilde{\lambda}<0.$ Moreover, $0<m(a,\mu)<\frac{s}{N}S^{N/(2s)}_{s},$ and $\widetilde{u}$ is a critical point of  Mountain Pass type.
\end{theorem}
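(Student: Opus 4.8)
\textbf{Proof plan for Theorem \ref{TH2}.}

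The plan is to follow the Mountain Pass strategy on $S_a$ in the $L^2$-critical regime $q=\overline{p}=2+4s/N$, which is genuinely borderline: the quadratic-in-dilation term and the $L^q$ term scale in the same way under the $L^2$-preserving dilation $u\mapsto t^{N/2}u(tx)$, while the critical term dominates for large $t$. First I would record the fiber map $\psi_u(t):=E_\mu(t^{N/2}u(t\cdot))$ and observe that, using the fractional Gagliardo--Nirenberg inequality $\int|u|^{\overline{p}}\le C_{N,\overline{p},s}^{\overline{p}}\,a^{4s/N}\|u\|_{D_s}^2$ valid on $S_a$, the condition \eqref{U200} forces the coefficient $\tfrac12-\tfrac{\mu}{\overline{p}}C_{N,\overline{p},s}^{\overline{p}}a^{4s/N}$ to be strictly positive; hence $\psi_u(t)\ge c_0 t^{2s}\|u\|_{D_s}^2 - c_1 t^{2^*_s s}\|u\|_{D_s}^{2^*_s}$ near $t=0$ with $c_0>0$, so $E_\mu>0$ on a small sphere $A_k=\{u\in S_a:\|u\|_{D_s}<k\}$-boundary, and $E_\mu<0$ far out; more precisely $E_\mu|_{S_a}$ has a strict mountain-pass geometry. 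This gives the minimax level
\begin{equation*}
m(a,\mu)=\inf_{\gamma\in\Gamma}\max_{t\in[0,1]}E_\mu(\gamma(t))>0,
\end{equation*}
where $\Gamma$ is the set of paths in $S_a$ joining a point inside $A_k$ to a point where $E_\mu<0$, and one checks $m(a,\mu)$ is positive and finite by the geometry just described.

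Next I would produce a Palais--Smale sequence at level $m(a,\mu)$ that is \emph{almost a Pohozaev sequence}, i.e.\ also satisfies $P_\mu(u_n)\to 0$ where $P_\mu$ is the Pohozaev/virial functional; this is exactly Jeanjean's trick from \cite{Jeanjean} applied in the augmented functional on $S_a\times\mathbb{R}$, and it is purely formal once the mountain-pass geometry is in place. The added information $P_\mu(u_n)\to 0$ together with $E_\mu(u_n)\to m(a,\mu)$ yields boundedness of $\|u_n\|_{D_s}$: combining the two relations kills the $L^{\overline{p}}$ term (since $\overline{p}$ is $L^2$-critical, $\gamma_{\overline{p},s}=1$) and leaves $m(a,\mu)=\tfrac{s}{N}\int|u_n|^{2^*_s}+o(1)$ plus a coercive $\|u_n\|_{D_s}^2$ control, so $\{u_n\}$ is bounded in $H^s_{rad}(\mathbb{R}^N)$. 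Passing to a subsequence, $u_n\rightharpoonup \widetilde u$ in $H^s$, strongly in $L^{\overline{p}}$ by the compact radial embedding $H^s_{rad}\hookrightarrow L^{\overline{p}}$, and the Lagrange multipliers $\lambda_n$ converge to some $\widetilde\lambda$, with $\widetilde\lambda<0$ coming from testing the equation against $u_n$, using $P_\mu(u_n)\to0$, and the strict inequality \eqref{U200} (this is where the sign of the multiplier is pinned down, exactly as in Soave \cite{Soave1}).

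The main obstacle is the loss of compactness in $L^{2^*_s}$ and in $L^2$. For the critical term I would invoke the Brezis--Lieb splitting together with the threshold $m(a,\mu)<\tfrac{s}{N}S_s^{N/(2s)}$, which is precisely the statement to be proved and must be verified separately via the Soave--Valdinoci test functions from \cite{SV1}: one plugs a truncated rescaled Aubin--Talenti bubble $U_{\varepsilon}$ into a suitable path and estimates $\max_t E_\mu(t^{N/2}U_\varepsilon(t\cdot))$, the energy gain from the subcritical perturbation pushing the level strictly below $\tfrac{s}{N}S_s^{N/(2s)}$ for $\varepsilon$ small; the cross term between the bubble and the mass constraint is the delicate computation here, and \eqref{U200} is used again to keep the perturbation from overwhelming the estimate. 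Given the strict gap, the standard concentration argument (à la \cite{Brezis}) rules out vanishing of the $L^{2^*_s}$ mass into a bubble, forcing $u_n\to\widetilde u$ strongly in $D_s$. For the $L^2$ loss I would use $\widetilde\lambda<0$: the limit $\widetilde u$ solves $(-\Delta)^s\widetilde u=\widetilde\lambda\widetilde u+\mu|\widetilde u|^{\overline{p}-2}\widetilde u+|\widetilde u|^{2^*_s-2}\widetilde u$, and a Liouville-type argument in the spirit of \cite{PFAQ} together with the negativity of $\widetilde\lambda$ and the already-established $D_s$-convergence upgrades weak to strong $L^2$ convergence, so $\widetilde u\in S_a$ and $E_\mu(\widetilde u)=m(a,\mu)$. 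Finally, positivity and radial symmetry follow by replacing $\widetilde u$ with $|\widetilde u|$ and its symmetric decreasing rearrangement (both admissible since they do not increase $\|\cdot\|_{D_s}$ and preserve the other terms), the strong maximum principle for $(-\Delta)^s$ giving $\widetilde u>0$; that $\widetilde u$ is of mountain-pass type is immediate from the construction, and $m(a,\mu)>0$ was established in the first step. I expect the test-function estimate giving the strict inequality $m(a,\mu)<\tfrac{s}{N}S_s^{N/(2s)}$ to be the technical heart of the argument.
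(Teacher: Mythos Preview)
Your proposal follows essentially the same route as the paper: Jeanjean's augmented functional on $\mathbb{R}\times S_a^r$ to produce a Palais--Smale sequence with $P_\mu(u_n)\to 0$, the threshold estimate $m_r(a,\mu)<\tfrac{s}{N}S_s^{N/(2s)}$ via the truncated Aubin--Talenti bubbles of \cite{SV1}, and the compactness alternative (Proposition~\ref{LM5}) resolved by ruling out bubble extraction through the Pohozaev identity of the weak limit. The architecture is correct.

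Three points deserve attention. First, a slip: $\gamma_{\overline{p},s}\neq 1$; what holds in the $L^2$-critical case is $\overline{p}\,\gamma_{\overline{p},s}=2$, so the $L^{\overline{p}}$ term scales like $\|u\|_{D_s}^2$ under dilation. Your boundedness sketch still works, but the mechanism is that $E_\mu(u_n)=\tfrac{s}{N}\int|u_n|^{2^*_s}+o(1)$ once you subtract $P_\mu(u_n)/2$, then interpolation and $P_\mu(u_n)\to 0$ close the loop. Second, your positivity/symmetry step---replacing the limit $\widetilde u$ by $|\widetilde u|^\ast$ after the fact---is not quite right: the rearranged function need not be a constrained critical point. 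The paper instead builds this into the minimax by taking paths $\beta_n(\tau)\ge 0$ in $S_a^r$; the PS sequence produced by Ghoussoub's theorem is then close to nonnegative radial functions, and strong convergence plus the fractional strong maximum principle give $\widetilde u>0$ directly. Third, you omit the final step that the radial mountain-pass level $m_r(a,\mu)$ actually equals the full ground-state level $m(a,\mu)=\inf_{\mathcal P_{a,\mu}}E_\mu$; the paper closes this via a rearrangement argument on $\mathcal P_{a,\mu}$ (if some $u\in\mathcal P_{a,\mu}\setminus S_a^r$ had lower energy, take $v=|u|^\ast$, analyze $P_\mu(v)\le 0$, and reach a contradiction using Lemma~\ref{Lem8}). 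With these corrections the proof goes through exactly as in the paper.
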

\begin{theorem}\label{TH3}
Let $N>2s,\ a,\mu>0$ and $\overline{p}<q<2_{s}^{\ast}$. If one of the following conditions holds:
\begin{enumerate}
 \item[$(1)$] $ N>4s\  \text{and}\  \mu a^{q(1-\gamma_{q,s})}<\frac{S^{\frac{N}{4s}q(1-\gamma_{q,s})}_{s}}{\gamma_{q,s}},$

 \item[$(2)$] $ N=\frac{q}{q-1}2s\ \text{and}\ \mu a^{q(1-\gamma_{q,s})}<\frac{S^{\frac{N}{4s}q(1-\gamma_{q,s})}_{s}}{\gamma_{q,s}},$

 \item[$(3)$] $N=4s \  \text{or}\ \frac{q}{q-1}2s<N<4s\  \text{or}\ 2s<N<\frac{q}{q-1}2s, $
 \end{enumerate}
then $E_{\mu}|_{S_{a}}$ has a ground state $\widetilde{u}$ with the following properties: $\widetilde{u}$ is a positive, radially symmetric function and solves \eqref{int1}--\eqref{in2} for some $\widetilde{\lambda}<0.$ Moreover, $0<m(a,\mu)<\frac{s}{N}S^{N/(2s)}_{s},$ and $\widetilde{u}$ is a critical point of  Mountain Pass type.
\end{theorem}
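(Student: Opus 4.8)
\noindent\textbf{Proof plan for Theorem \ref{TH3}.} Since $q>\overline p$ the functional $E_\mu$ is unbounded below on $S_a$, so instead of minimising I would build a mountain--pass critical point on a Pohozaev constraint, following \cite{Jeanjean} and Soave's Sobolev--critical treatment in \cite{Soave1}. For $u\in S_a$ put $t\star u(x)=e^{Nt/2}u(e^tx)\in S_a$ and consider the fibre map $\psi_u(t)=E_\mu(t\star u)=\tfrac12 e^{2st}\|u\|^2_{D_s(\mathbb R^N)}-\tfrac\mu q e^{\frac{N(q-2)}2 t}\int_{\mathbb R^N}|u|^q\,dx-\tfrac1{2_s^{\ast}}e^{2_s^{\ast}st}\int_{\mathbb R^N}|u|^{2_s^{\ast}}\,dx$. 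As $\overline p<q<2_s^{\ast}$ gives $0<2s<\tfrac{N(q-2)}2<2_s^{\ast}s$, we have $\psi_u\to0^+$ at $-\infty$, $\psi_u\to-\infty$ at $+\infty$, and $\psi_u$ has a unique critical point $t_u$, its global maximum. Setting $P_\mu(u)=\|u\|^2_{D_s(\mathbb R^N)}-\mu\gamma_{q,s}\int_{\mathbb R^N}|u|^q\,dx-\int_{\mathbb R^N}|u|^{2_s^{\ast}}\,dx=\tfrac1s\psi_u'(0)$, the fractional Pohozaev identity puts every solution of \eqref{int1} on $\mathcal P_a:=\{u\in S_a:P_\mu(u)=0\}$; moreover $u\mapsto t_u\star u$ maps $S_a$ onto $\mathcal P_a$. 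I would then set $m(a,\mu)=\inf_{\mathcal P_a}E_\mu$ and identify it with the mountain--pass level of $E_\mu$ on $S_a$ for paths from a low--norm point to a point of negative energy.

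The quantitative heart is the bound $0<m(a,\mu)<\tfrac sN S_s^{N/(2s)}$. On $\mathcal P_a$ one has the identity $E_\mu(u)=\mu\tfrac{q\gamma_{q,s}-2}{2q}\int_{\mathbb R^N}|u|^q\,dx+\tfrac sN\int_{\mathbb R^N}|u|^{2_s^{\ast}}\,dx$, a sum of positive terms since $q\gamma_{q,s}>2$ (which is $q>\overline p$); combined with the lower bound $\|u\|_{D_s(\mathbb R^N)}\ge\delta>0$ on $\mathcal P_a$ --- obtained by inserting $P_\mu(u)=0$ into the fractional Gagliardo--Nirenberg inequality (constant $C_{N,q,s}$) and the Sobolev inequality --- this gives $m(a,\mu)>0$. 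For the strict upper bound I would insert the truncated extremal family $U_\varepsilon$ of \cite{SV1} into $t\mapsto E_\mu(t\star U_\varepsilon)$, maximise in $t$, and expand as $\varepsilon\to0^+$: the leading term is $\tfrac sN S_s^{N/(2s)}$, the positive errors from truncating $\|U_\varepsilon\|^2_{D_s}$, $\int|U_\varepsilon|^{2_s^{\ast}}$ and from normalising $\|U_\varepsilon\|_2$ carry powers of $\varepsilon$ governed by the sign of $N-4s$, and the negative gain $-\tfrac\mu q\int|U_\varepsilon|^q$ is governed by $q$ versus $2_s^{\ast}/2$ (note $N=\tfrac q{q-1}2s\iff q=2_s^{\ast}/2$); in each of cases (1)--(3), after the stated smallness of $\mu a^{q(1-\gamma_{q,s})}$ in (1),(2), the negative term wins and the inequality is strict. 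This $\varepsilon$--expansion is the heaviest computation.

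Next, working in $H^s_{rad}(\mathbb R^N)$ (valid by symmetric criticality) and applying the argument of \cite{Jeanjean} to $\widetilde E_\mu(t,u)=E_\mu(t\star u)$ on $\mathbb R\times S_a$, I would obtain $u_n\in S_a$ --- which may be taken nonnegative and radially nonincreasing --- with $E_\mu(u_n)\to m(a,\mu)$, $\big(E_\mu|_{S_a}\big)'(u_n)\to0$ and $P_\mu(u_n)\to0$. Because $P_\mu(u_n)\to0$ one has $E_\mu(u_n)=\mu\tfrac{q\gamma_{q,s}-2}{2q}\int|u_n|^q\,dx+\tfrac sN\int|u_n|^{2_s^{\ast}}\,dx+o(1)$ with both coefficients positive ($q>\overline p$), which bounds $\|u_n\|_{D_s(\mathbb R^N)}$, hence $\|u_n\|_{H^s}$, and then the Lagrange multipliers $\lambda_n$ (obtained by pairing with $u_n$) are bounded. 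Up to a subsequence $u_n\rightharpoonup\widetilde u$ in $H^s$, $u_n\to\widetilde u$ in $L^q(\mathbb R^N)$ by the compact radial embedding, $\lambda_n\to\widetilde\lambda$, and $\widetilde u$ solves \eqref{int1} with $\widetilde\lambda$.

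The final step --- and the main obstacle --- is to upgrade this to strong convergence in $H^s$, in particular in $L^2$ so that $\widetilde u\in S_a$; here the failure of compactness of $H^s_{rad}\hookrightarrow L^2$ is overcome by the estimate of the second step together with a Liouville--type nonexistence result in the spirit of \cite{PFAQ}. With $w_n=u_n-\widetilde u\rightharpoonup0$, Brezis--Lieb splits $\|u_n\|^2_{D_s(\mathbb R^N)}$ and $\int|u_n|^{2_s^{\ast}}$, while $\|w_n\|_q\to0$; pairing the equation with $w_n$ and with $u_n$ yields $\|w_n\|^2_{D_s(\mathbb R^N)}=\int|w_n|^{2_s^{\ast}}\,dx+o(1)=:\ell$. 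If $\ell>0$ then $\ell\ge S_s^{N/(2s)}$ by Sobolev and the energy splitting gives $m(a,\mu)=E_\mu(\widetilde u)+\tfrac sN\ell$; since $P_\mu(\widetilde u)=0$ and $q>\overline p$ force $E_\mu(\widetilde u)\ge0$ (strictly if $\widetilde u\neq0$, by the $\mathcal P_a$--identity above), this contradicts the upper bound, and the same argument with $\widetilde u=0$ shows $\widetilde u\neq0$. Thus $\ell=0$, i.e.\ $u_n\to\widetilde u$ in $D_s(\mathbb R^N)$. Since $\widetilde u$ is a nontrivial solution, Pohozaev and Nehari give $\widetilde\lambda\|\widetilde u\|^2_2=\mu(\gamma_{q,s}-1)\int_{\mathbb R^N}|\widetilde u|^q\,dx<0$ because $\gamma_{q,s}<1$ (equivalently $q<2_s^{\ast}$), so $\widetilde\lambda<0$; passing to the limit in $\lambda_na^2=\|u_n\|^2_{D_s(\mathbb R^N)}-\mu\int|u_n|^q\,dx-\int|u_n|^{2_s^{\ast}}\,dx+o(1)$ then forces $\|u_n\|^2_2\to\|\widetilde u\|^2_2=a^2$, so $\widetilde u\in S_a$ and $u_n\to\widetilde u$ in $H^s$. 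Hence $\widetilde u$ attains $m(a,\mu)$, is a critical point of mountain--pass type, is positive by the strong maximum principle for $(-\Delta)^s-\widetilde\lambda$, and is radially symmetric, completing the proof of Theorem \ref{TH3}.
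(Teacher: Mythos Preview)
Your plan is essentially the paper's own proof: the fibre map $\psi_u$, the Pohozaev manifold $\mathcal P_{a,\mu}$, the identification of $m(a,\mu)$ with the mountain--pass level, the lower bound $m(a,\mu)>0$ via $P_\mu(u)=0$ and Gagliardo--Nirenberg, the strict upper bound $m(a,\mu)<\tfrac sN S_s^{N/(2s)}$ via the truncated Sobolev extremals $u_\varepsilon$ (the case split (1)--(3) is exactly the paper's Lemma~\ref{Lem17}), Jeanjean's augmented functional on $\mathbb R\times S_a^r$ to produce a Palais--Smale sequence with $P_\mu(u_n)\to0$, and the Brezis--Lieb dichotomy argument (the paper's Proposition~\ref{LM5}) to obtain strong convergence and $\widetilde\lambda<0$.

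One step you pass over: you construct the critical point in the radial class and assert it attains $m(a,\mu)=\inf_{\mathcal P_{a,\mu}}E_\mu$, but your Palais--Smale sequence a priori only reaches the radial level $m_r(a,\mu)=\inf_{\mathcal P_{a,\mu}\cap S_a^r}E_\mu$. The paper closes this by a symmetric decreasing rearrangement argument: if some non-radial $u\in\mathcal P_{a,\mu}$ had $E_\mu(u)<m_r(a,\mu)$, set $v=|u|^\ast$; then $P_\mu(v)\le0$, and using the unique maximum structure of $\psi_v$ together with $E_\mu(w)=\tfrac sN\int|w|^{2_s^\ast}+\tfrac\mu q(\tfrac{q\gamma_{q,s}}2-1)\int|w|^q$ on $\mathcal P_{a,\mu}$ one derives a contradiction. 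You should include this to justify that $\widetilde u$ is a genuine ground state on $S_a$ rather than only on $S_a^r$. (Your aside about a Liouville result from \cite{PFAQ} is not actually used in the argument you wrote; the paper does not invoke it here either.)
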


\begin{theorem}\label{Lem1}
Let $a>0$ and $\mu=0$. Then we have the following conclusions:
\begin{enumerate}
\item[$(1)$] If $N>4s$, then $E_{0}$ on $S_{a}$ has a unique positive radial ground state $U_{\epsilon,0}$ defined in \eqref{G1} for the unique choice of $\epsilon>0$ which gives $||U_{\epsilon,0}||_{L^{2}(\mathbb{R}^{N})}=a.$

 \item[$(2)$] If $2s<N\leq4s$, then \eqref{int1} has no positive solutions in $S_{a}$ for any $\lambda\in \mathbb{R}$.
\end{enumerate}
\end{theorem}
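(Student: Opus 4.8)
The strategy rests on the observation that for $\mu=0$ the nonlinearity reduces to the single critical power, and the terms $\lambda u$ and $|u|^{2_{s}^{\ast}-2}u$ are exactly balanced in the Pohozaev identity, so the first step is to show the Lagrange multiplier must vanish. Given a positive solution $(\lambda,u)\in\mathbb{R}\times H^{s}(\mathbb{R}^{N})$ of \eqref{int1}--\eqref{in2} with $\mu=0$, I would use the regularity theory for the fractional Laplacian to justify pairing the equation with $u$ and applying the Pohozaev identity, which give
\begin{equation*}
\|u\|^{2}_{D_{s}(\mathbb{R}^{N})}=\lambda\|u\|^{2}_{L^{2}(\mathbb{R}^{N})}+\|u\|^{2_{s}^{\ast}}_{L^{2_{s}^{\ast}}(\mathbb{R}^{N})},\qquad
\frac{N-2s}{2}\|u\|^{2}_{D_{s}(\mathbb{R}^{N})}=\frac{N\lambda}{2}\|u\|^{2}_{L^{2}(\mathbb{R}^{N})}+\frac{N}{2_{s}^{\ast}}\|u\|^{2_{s}^{\ast}}_{L^{2_{s}^{\ast}}(\mathbb{R}^{N})}.
\end{equation*}
Since $N/2_{s}^{\ast}=(N-2s)/2$, subtracting $\frac{N-2s}{2}$ times the first relation from the second cancels the critical term and leaves $s\lambda\|u\|^{2}_{L^{2}(\mathbb{R}^{N})}=0$; as $\|u\|^{2}_{L^{2}(\mathbb{R}^{N})}=a^{2}>0$, this forces $\lambda=0$.

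Next I would invoke the classification of positive solutions of the pure critical equation $(-\Delta)^{s}u=|u|^{2_{s}^{\ast}-2}u$ in $\mathbb{R}^{N}$ (equivalently, of the extremals of the Sobolev quotient \eqref{int2}; cf. \cite{CT} and the references therein) to conclude that every positive solution is $U_{\epsilon,x_{0}}$ as in \eqref{G1}, with $\kappa$ the constant attached to that equation. The decisive computation is the scaling of the $L^{2}$-norm: a change of variables gives $\|U_{\epsilon,x_{0}}\|^{2}_{L^{2}(\mathbb{R}^{N})}=\kappa^{2}\epsilon^{4s-N}\int_{\mathbb{R}^{N}}(1+|y|^{2})^{-(N-2s)}\,dy$, and the integral converges if and only if $2(N-2s)>N$, i.e. $N>4s$.

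From this both assertions follow quickly. For $(2)$: if $2s<N\le 4s$, the bubble $U_{\epsilon,x_{0}}$ is never in $L^{2}(\mathbb{R}^{N})$, so by the two steps above no positive solution of \eqref{int1} can lie in $S_{a}$, whatever $\lambda\in\mathbb{R}$. For $(1)$: when $N>4s$ one has $4s-N<0$, so $\epsilon\mapsto\|U_{\epsilon,0}\|^{2}_{L^{2}(\mathbb{R}^{N})}$ is a continuous strictly decreasing bijection of $(0,\infty)$ onto $(0,\infty)$, and there is a unique $\epsilon$ with $\|U_{\epsilon,0}\|_{L^{2}(\mathbb{R}^{N})}=a$; that $U_{\epsilon,0}$ solves \eqref{int1}--\eqref{in2} with multiplier $0$ and is positive and radial. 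To identify it as a ground state I would compute its energy using that it attains $S_{s}$ and solves the critical equation, so $\|U_{\epsilon,0}\|^{2}_{D_{s}(\mathbb{R}^{N})}=\|U_{\epsilon,0}\|^{2_{s}^{\ast}}_{L^{2_{s}^{\ast}}(\mathbb{R}^{N})}=S_{s}^{N/(2s)}$ and $E_{0}(U_{\epsilon,0})=\frac{s}{N}S_{s}^{N/(2s)}$; on the other hand, any positive solution $v\in S_{a}$ has multiplier $0$ by Step 1, hence solves the critical equation, so the fractional Sobolev inequality yields $\|v\|^{2}_{D_{s}(\mathbb{R}^{N})}\ge S_{s}^{N/(2s)}$ and $E_{0}(v)=\frac{s}{N}\|v\|^{2}_{D_{s}(\mathbb{R}^{N})}\ge\frac{s}{N}S_{s}^{N/(2s)}=E_{0}(U_{\epsilon,0})$, with equality precisely when $v$ is a Sobolev extremal; the mass constraint then pins $\epsilon$ down and radiality forces the centre to be $0$. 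Hence $U_{\epsilon,0}$ is a ground state of $E_{0}$ on $S_{a}$, it is the unique positive radial one, and every other ground state is some $U_{\epsilon,x_{0}}$.

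The routine parts — the two integral identities, the scaling calculation, the value of the energy — present no real difficulty. The main obstacle is to quote correctly the two nontrivial imported facts: the regularity and Pohozaev identity for $H^{s}(\mathbb{R}^{N})$-solutions of the critical equation, and the classification of its positive solutions (equivalently, of the extremals of \eqref{int2}); once these are available, the argument is exactly the computation sketched above.
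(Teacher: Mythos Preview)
Your argument is correct and considerably more direct than the paper's. The paper proceeds variationally: it sets up the Pohozaev manifold $\mathcal{P}_{a,0}$, shows $\mathcal{P}^{+}_{a,0}=\mathcal{P}^{0}_{a,0}=\emptyset$ so that $\mathcal{P}_{a,0}=\mathcal{P}^{-}_{a,0}$ is a natural constraint, uses the fiber map $\Psi^{0}_{u}$ to identify
\[
\inf_{\mathcal{P}_{a,0}}E_{0}=\inf_{u\in S_{a}}\max_{t\in\mathbb{R}}E_{0}(t\star u)=\frac{s}{N}S_{s}^{N/(2s)},
\]
and then argues (via symmetric decreasing rearrangement in the case $2s<N\le 4s$) that this level is attained exactly by the bubbles when $N>4s$. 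You bypass all of this machinery by going straight to the PDE: the pair consisting of the Nehari identity and the Pohozaev identity forces $\lambda=0$, after which the classification of positive solutions of $(-\Delta)^{s}u=u^{2_{s}^{\ast}-1}$ and the $L^{2}$ scaling of $U_{\epsilon,0}$ settle both parts at once. Your route is shorter and also proves part~(2) more transparently---the paper's written argument literally shows only that the infimum on $\mathcal{P}_{a,0}$ is not attained, whereas the theorem asserts nonexistence of \emph{any} positive solution, which is exactly what your direct argument yields.

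One small point to tighten: when you establish the ground-state property in~(1) you compare $E_{0}(U_{\epsilon,0})$ only with $E_{0}(v)$ for \emph{positive} solutions $v\in S_{a}$, but a ground state must minimise among all constrained critical points. The fix is immediate---your Step~1 (Pohozaev plus testing the equation with $u$) does not actually require positivity once the needed regularity is in place, so every critical point of $E_{0}|_{S_{a}}$ has multiplier $0$, hence lies in $\mathcal{P}_{a,0}$, and then your Sobolev inequality computation gives $E_{0}(v)=\frac{s}{N}\|v\|^{2}_{D_{s}(\mathbb{R}^{N})}\ge \frac{s}{N}S_{s}^{N/(2s)}$ for any such $v$. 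Just make this explicit, and note that the Pohozaev identity you invoke (Proposition~\ref{pro} in the paper) is stated for positive $L^{\infty}$ solutions, so a line on regularity for general $H^{s}$ solutions is warranted.
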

\begin{theorem}\label{TH4}
Let $u_{\mu}$ be the corresponding positive ground state solution obtained in Theorems \ref{TH1}--\ref{TH3} with energy level $m(a,\mu)$.
Then the following conclusions hold:
\begin{enumerate}
\item[$1)$] If $2<q<\overline{p}$, then $m(a,\mu)\rightarrow 0$, and $\|u_{\mu}\|^{2}_{D_{s}(\mathbb{R}^{N})}\rightarrow 0$ as $\mu\rightarrow 0^{+}$.
\item[$2)$] If $\overline{p}\leq q<2^{\ast}$, then $m(a,\mu)\rightarrow \frac{s}{N}S^{\frac{N}{2s}}_{s}$, and $\|u_{\mu}\|^{2}_{D_{s}(\mathbb{R}^{N})}\rightarrow S^{\frac{N}{2s}}_{s}$ as $\mu\rightarrow 0^{+}$.
\end{enumerate}
\end{theorem}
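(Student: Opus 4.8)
The plan is to exploit the Pohozaev constraint. Being a critical point of $E_{\mu}|_{S_{a}}$ (equivalently, a solution of \eqref{int1}--\eqref{in2}), $u_\mu$ lies on the Pohozaev manifold used in the proofs of Theorems \ref{TH1}--\ref{TH3}; that is, testing the equation against $u_\mu$ and combining with the fractional Pohozaev identity yields
\begin{equation}\label{th4P}
\|u_\mu\|^{2}_{D_{s}(\mathbb{R}^{N})}=\gamma_{q,s}\,\mu\int_{\mathbb{R}^{N}}|u_\mu|^{q}\,dx+\int_{\mathbb{R}^{N}}|u_\mu|^{2_{s}^{\ast}}\,dx,
\end{equation}
and substituting \eqref{th4P} into \eqref{eq1.2},
\begin{equation}\label{th4E}
m(a,\mu)=E_{\mu}(u_\mu)=\Big(\tfrac{1}{2}-\tfrac{1}{q\gamma_{q,s}}\Big)\|u_\mu\|^{2}_{D_{s}(\mathbb{R}^{N})}+\Big(\tfrac{1}{q\gamma_{q,s}}-\tfrac{1}{2_{s}^{\ast}}\Big)\int_{\mathbb{R}^{N}}|u_\mu|^{2_{s}^{\ast}}\,dx.
\end{equation}
Since $2<q<2_{s}^{\ast}$ one always has $q\gamma_{q,s}<2_{s}^{\ast}$, so the coefficient $\tfrac{1}{q\gamma_{q,s}}-\tfrac{1}{2_{s}^{\ast}}$ is positive, while $\tfrac{1}{2}-\tfrac{1}{q\gamma_{q,s}}$ is negative, zero or positive according as $q<\overline{p}$, $q=\overline{p}$ or $q>\overline{p}$. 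I will repeatedly combine \eqref{th4P}--\eqref{th4E} with the fractional Gagliardo--Nirenberg inequality $\int|u_\mu|^{q}\le C^{q}_{N,q,s}\|u_\mu\|^{q\gamma_{q,s}}_{D_{s}(\mathbb{R}^{N})}a^{q(1-\gamma_{q,s})}$ and the Sobolev inequality $\int|u_\mu|^{2_{s}^{\ast}}\le S_{s}^{-2_{s}^{\ast}/2}\|u_\mu\|^{2_{s}^{\ast}}_{D_{s}(\mathbb{R}^{N})}$.

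For part $1)$, $2<q<\overline{p}$, so $q\gamma_{q,s}<2$: in \eqref{th4E} the coefficient of $\|u_\mu\|^{2}_{D_{s}(\mathbb{R}^{N})}$ is negative and the other is positive. As $m(a,\mu)<0$ by Theorem \ref{TH1}, \eqref{th4E} forces $\int|u_\mu|^{2_{s}^{\ast}}<\theta\|u_\mu\|^{2}_{D_{s}(\mathbb{R}^{N})}$ for an explicit constant $\theta=\theta(N,s,q)\in(0,1)$. Feeding this into \eqref{th4P} and using Gagliardo--Nirenberg gives $(1-\theta)\|u_\mu\|^{2}_{D_{s}(\mathbb{R}^{N})}\le\gamma_{q,s}\mu\int|u_\mu|^{q}\le \gamma_{q,s}C^{q}_{N,q,s}a^{q(1-\gamma_{q,s})}\mu\,\|u_\mu\|^{q\gamma_{q,s}}_{D_{s}(\mathbb{R}^{N})}$; since $2-q\gamma_{q,s}>0$ this yields $\|u_\mu\|^{2-q\gamma_{q,s}}_{D_{s}(\mathbb{R}^{N})}\le c\,\mu$, hence $\|u_\mu\|^{2}_{D_{s}(\mathbb{R}^{N})}\to0$ as $\mu\to0^{+}$. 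Then $|E_{\mu}(u_\mu)|$ is bounded by a sum of positive powers of $\|u_\mu\|_{D_{s}(\mathbb{R}^{N})}$ (again by Gagliardo--Nirenberg and Sobolev), so $m(a,\mu)\to0$.

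For part $2)$, $\overline{p}\le q<2_{s}^{\ast}$, I would first prove a uniform bound $\|u_\mu\|^{2}_{D_{s}(\mathbb{R}^{N})}\le R$ for $\mu$ small. If $q>\overline{p}$ the first coefficient in \eqref{th4E} is positive, so \eqref{th4E} with $0<m(a,\mu)<\frac{s}{N}S_{s}^{N/(2s)}$ gives the bound at once. If $q=\overline{p}$, then $q\gamma_{q,s}=2$, \eqref{th4E} reduces to $m(a,\mu)=\frac{s}{N}\int|u_\mu|^{2_{s}^{\ast}}$, hence $\int|u_\mu|^{2_{s}^{\ast}}<S_{s}^{N/(2s)}$, and inserting $\int|u_\mu|^{\overline{p}}\le C^{\overline{p}}_{N,\overline{p},s}a^{4s/N}\|u_\mu\|^{2}_{D_{s}(\mathbb{R}^{N})}$ into \eqref{th4P} absorbs the $\overline{p}$-term for $\mu$ small and bounds $\|u_\mu\|_{D_{s}(\mathbb{R}^{N})}$. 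In both cases $\int|u_\mu|^{q}$ stays bounded, so $\mu\int|u_\mu|^{q}\to0$. From \eqref{th4P}, $\int|u_\mu|^{2_{s}^{\ast}}=\|u_\mu\|^{2}_{D_{s}(\mathbb{R}^{N})}+o(1)$, so \eqref{th4E} becomes $m(a,\mu)=\frac{s}{N}\|u_\mu\|^{2}_{D_{s}(\mathbb{R}^{N})}+o(1)$; combined with $m(a,\mu)<\frac{s}{N}S_{s}^{N/(2s)}$ this gives $\limsup_{\mu\to0^{+}}\|u_\mu\|^{2}_{D_{s}(\mathbb{R}^{N})}\le S_{s}^{N/(2s)}$. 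For the reverse inequality I would exclude $\|u_{\mu_{n}}\|_{D_{s}(\mathbb{R}^{N})}\to0$ along any $\mu_{n}\to0^{+}$: dividing the identity $\|u_\mu\|^{2}_{D_{s}(\mathbb{R}^{N})}-\widetilde{\lambda}_{\mu}a^{2}=\mu\int|u_\mu|^{q}+\int|u_\mu|^{2_{s}^{\ast}}$ (the equation tested against $u_\mu$) by $\|u_\mu\|^{2}_{D_{s}(\mathbb{R}^{N})}$ and using $\widetilde{\lambda}_{\mu}<0$, $q\gamma_{q,s}\ge2$, Gagliardo--Nirenberg, Sobolev and $\mu\to0^{+}$ leads to $1\le o(1)$, a contradiction; hence $\ell:=\liminf_{\mu\to0^{+}}\|u_\mu\|^{2}_{D_{s}(\mathbb{R}^{N})}>0$. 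Finally \eqref{th4P}, Sobolev and the bound on $\mu\int|u_\mu|^{q}$ give $\|u_\mu\|^{2}_{D_{s}(\mathbb{R}^{N})}\big(1-S_{s}^{-2_{s}^{\ast}/2}\|u_\mu\|^{2_{s}^{\ast}-2}_{D_{s}(\mathbb{R}^{N})}\big)\le c\,\mu$; dividing by $\|u_\mu\|^{2}_{D_{s}(\mathbb{R}^{N})}$ along a sequence realizing $\ell$ and letting $\mu\to0^{+}$ forces $\ell\ge S_{s}^{N/(2s)}$. Therefore $\|u_\mu\|^{2}_{D_{s}(\mathbb{R}^{N})}\to S_{s}^{N/(2s)}$ and $m(a,\mu)=\frac{s}{N}\|u_\mu\|^{2}_{D_{s}(\mathbb{R}^{N})}+o(1)\to\frac{s}{N}S_{s}^{N/(2s)}$.

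I expect the only genuine obstacle to lie in the critical/supercritical regime of part $2)$: identities \eqref{th4P}--\eqref{th4E} give only the upper estimates for free, and since $D_{s}(\mathbb{R}^{N})\hookrightarrow L^{2_{s}^{\ast}}(\mathbb{R}^{N})$ is not compact one must rule out the ``spreading'' alternative, in which $\|u_\mu\|_{D_{s}(\mathbb{R}^{N})}\to0$ while $\|u_\mu\|_{L^{2}(\mathbb{R}^{N})}=a$ is preserved; this is exactly where the sign $\widetilde{\lambda}_{\mu}<0$ and the relation $q\gamma_{q,s}\ge2$ enter, after which the Sobolev inequality recovers the sharp threshold $\frac{s}{N}S_{s}^{N/(2s)}$. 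Part $1)$ is, by comparison, a soft consequence of $m(a,\mu)<0$ together with \eqref{th4P}.
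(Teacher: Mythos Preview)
Your argument is correct and reaches the same conclusions as the paper, but the route differs in both parts.

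For part $1)$, the paper does not manipulate \eqref{th4P}--\eqref{th4E} at all: it simply recalls from Theorem~\ref{TH1} that $u_\mu$ is a local minimizer on $A_{R_0}=\{u\in S_a:\|u\|_{D_s}<R_0\}$, and then checks from the definition of $R_0$ (the first zero of $h$ in \eqref{equ5}) that $R_0(a,\mu)\to0$ as $\mu\to0^+$; the energy estimate then follows from the lower bound $E_\mu(u_\mu)\ge h(\|u_\mu\|_{D_s})$. Your approach instead extracts the quantitative bound $\|u_\mu\|_{D_s}^{\,2-q\gamma_{q,s}}\le c\,\mu$ directly from $m(a,\mu)<0$ and the Pohozaev relation, which is slicker and does not require revisiting the geometry of $h$.

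For part $2)$, the paper first proves an auxiliary monotonicity lemma ($\mu\mapsto m(a,\mu)$ is nonincreasing on $[0,\widetilde\mu]$, via the fiber-map characterization $m(a,\mu)=\inf_{S_a}\max_t E_\mu(t\star u)$). This lemma is used twice: it gives the uniform upper bound $m(a,\mu)\le m(a,0)=\tfrac{s}{N}S_s^{N/(2s)}$, and it gives the uniform positive lower bound $m(a,\mu)\ge m(a,\widetilde\mu)>0$ that rules out $\|u_{\mu}\|_{D_s}\to0$. You bypass this lemma entirely: the upper bound you take from the strict inequality $m(a,\mu)<\tfrac{s}{N}S_s^{N/(2s)}$ already established in Theorems~\ref{TH2}--\ref{TH3}, and the nondegeneracy $\ell>0$ you obtain from the sign $\widetilde\lambda_\mu<0$ together with $q\gamma_{q,s}\ge2$. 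The paper's final step coincides with yours (Sobolev applied to $\|u_\mu\|^2_{D_s}-\int|u_\mu|^{2_s^*}\to0$ forces $\ell\ge S_s^{N/(2s)}$). Your version is more self-contained; the paper's monotonicity lemma, on the other hand, makes the comparison with the limiting problem $\mu=0$ explicit and is of independent interest.
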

\begin{remark}
{\rm The assumptions \eqref{equ2}, \eqref{equ3} and \eqref{U200} are used to describe the geometry of $E_{\mu}$. Meanwhile, the assumptions in Theorem \ref{TH3} are applied to overcome the lack of compactness.}
\end{remark}
\begin{remark}
{\rm We should point out that Luo and Zhang in \cite{LZ20} considered the subcritical fractional equation with combined nonlinearities and proved the existence and nonexistence of normalized solutions, however, in this paper we consider the existence and nonexistence of normalized solutions for the critical fractional equation with combined nonlinearities. Compared with the subcritical case, the critical case is more complicated and needs to overcome the lack of compactness.}
\end{remark}
 In this paper, we invoke some ideas proposed by Soave in \cite{Soave,Soave1}. Compared to the Laplacian problems, the fractional Laplacian problems are nonlocal and more challenging. Indeed, when we consider the fractional Laplacian problem, the corresponding algebraic equation is about fractional order, which is more complicated to deal with than an integer-order algebraic equation. Moreover, one of the main difficulties is to analyze the convergence of constrained Palais-Smale sequence.  To overcome the lack of compactness, we employ delicate methods which can be found in \cite{Brezis}, that is, cut-off technique and energy estimate.  To show the least energy strictly less than ``the threshold energy", our analysis is more difficult and complicated. Indeed, when we deal with the $L^{2}$-critical and $L^{2}$-supercritical case, we need to give an exact classification of dimensions $N$, which depends on $s$ and $q$, and give exact estimate for energy function in each cases. In particular, for $2s<N<4s$ and $\mu=0$, to show Theorem \ref{Lem1} (2), we need to prove that all solutions for equation $(-\Delta)^{s}v=v^{2_{s}^{\ast}-1}, \ v\geq 0\ \text{in}\ \mathbb{R}^{N},$ must be $\alpha U_{\epsilon,0}$ for some $\alpha,\epsilon>0$.

Finally, let us sketch the proof of above theorems. In general, this study can be considered as a counterpart of the fractional Br\'ezis-Nirenberg  problem in the context of normalized solutions. To overcome the lack of compactness which is a crucial step for the critical case, we show that the least energy strictly less than ``the threshold energy", we employ delicate methods which can be found in \cite{Brezis}, that is, cut-off technique and energy estimate. The convergence of Palais-Smale sequence (see Proposition \ref{LM5}) is one of the most delicate ingredient in the proofs of our main results. We introduce a fiber maps $\Psi^{\mu}_{u}(t)$ (see \eqref{pp}), it is well known that any critical point of  $E_{\mu}|_{S_{a}}$ stays in $\mathcal{P}_{a,\mu}$(see \eqref{k1}), the monotonicity and convexity properties of $\Psi^{\mu}_{u}(t)$ strongly affect the structure of $\mathcal{P}_{a,\mu}$. It is easy to see that $(\Psi^{\mu}_{u})'(t)=P_{\mu}(t\star u)$, so that $t$ is a critical point of $\Psi^{\mu}_{u}(t)$ if and only if $t\star u\in \mathcal{P}_{(a,\mu)}$ and in particular $u\in \mathcal{P}_{(a,\mu)}$ if and only if $0$ is a critical point of $\Psi^{\mu}_{u}(t)$. In this spirit, we split $\mathcal{P}_{a,\mu}$ into three parts, then we prove that $\mathcal{P}^{0}_{a,\mu}=\emptyset$ and $\mathcal{P}_{a,\mu}$ is a smooth manifold of codimension 1 in $S_{a}$ under suitable conditions. For $L^{2}$-subcritical case, we restricted the energy function $E_{\mu}$ on the $\mathcal{P}_{a,\mu}$ and we can prove that  $E_{\mu}|_{\mathcal{P}_{a,\mu}}$ is  bounded from below, so a local minimizer $\widetilde{u}$ for $E_{\mu}$ on the $\mathcal{P}_{a,\mu}$ can be obtained. For $L^{2}$-critical/supercritical, we construct different linking structures to obtain the Mountain Pass type solutions.

The paper is organized as follows. In Section \ref{sec1}, we introduce some preliminaries that will be used to prove Theorems \ref{TH1}--\ref{TH3}. In Section \ref{sec2}, we give some lemmas for $L^{2}$-subcritical perturbation. In Section \ref{sec3}, we give some preliminaries for $L^{2}$-critical perturbation. In Section \ref{sec4}, we give some lemmas for $L^{2}$-supercritical perturbation. In Section \ref{sec5}, we prove Theorem \ref{TH1}.  In Section \ref{sec6}, we prove Theorems \ref{TH2}--\ref{TH3}. In Section \ref{Lab5}, we prove Theorem \ref{Lem1}.  Finally, the proof of Theorem \ref{TH4} will be given in Section \ref{sec7}.

\section{Preliminaries }\label{sec1}
Let $S_s$ be the sharp embedding constant of ${D^{s}(\mathbb{R}^{N})}\hookrightarrow L^{2_{s}^{\ast}}(\mathbb{R}^{N}),$
\begin{align}
S_{s}=\inf \limits_{u\in {D^{s}(\mathbb{R}^{N})})\setminus \{0\}}\frac{\|u\|^{2}_{D^{s}(\mathbb{R}^{N})}}{(\int_{\mathbb{R}^{N}}|u|^{2_{s}^{\ast}}dx)^{\frac{2}{2_{s}^{\ast}}}}.
\end{align}from \cite{CT} $S_{s}$\ is attained in\ $\mathbb{R}^{N}$ by \ $\widetilde{u}(x)=\kappa(\varepsilon^{2}+|x-x_{0}|^{2})^{-\frac{N-2s}{2}}$, where $\kappa\neq 0 \in \mathbb{R},\ \varepsilon>0 $ are fixed constants and $x_{0}\in
 \mathbb{R}^{N}$.

It is useful to introduce the  fractional Gagliardo-Nirenberg-Sobolev inequality (see\cite{FLS})
\begin{equation}\label{4}
\int_{\mathbb{R}^{N}}|u|^{p}dx\leq C_{N,p,s}\left(\int_{\mathbb{R}^{N}}|(-\Delta)^{\frac{s}{2}}u|^{2}dx\right)^{\frac{N(p-2)}{4s}}\left(\int_{\mathbb{R}^{N}}|u|^{2}dx\right)^{\frac{p}{2}-\frac{N(p-2)}{4s}}\ \ \text{for all} \ \ u\in H^{s}(\mathbb{R}^{N}).
\end{equation}

Define
\begin{align*}
\gamma_{p,s}=\frac{N(p-2)}{2ps},
\end{align*}
 it is easy to see that
\begin{equation}\label{int45}
p\gamma_{p,s}\left\{\begin{array}{ll}
                <{\displaystyle 2},& \text{if} \ 2<p<\overline{p},\\
                ={\displaystyle 2},& \text{if} \ p=\overline{p},\\
                >{\displaystyle 2},& \text{if} \ \overline{p}<p<2_{s}^{\ast},
           \end{array}\right.
\text{and that}\ \
\gamma_{2_{s}^{\ast}}=1,
\end{equation}
and
\begin{align}\label{U3}
  \|u\|_{L^{p}}\leq C_{N,p,s}\|(-\Delta)^{s}u\|^{\gamma_{p,s}}_{L^{2}}\|u\|^{1-\gamma_{p,s}}_{L^{2}}\ \text{for all} \ \ u\in H^{s}(\mathbb{R}^{N}).
\end{align}

We first give  the following key  Pohozaev identity for the fractional Laplace operator.

\begin{proposition}[Theorem A.1 in \cite{BHAKTA}]\label{pro}
 Let $u\in H^{s}(\mathbb{R}^{N})\bigcap L^{\infty}(\mathbb{R}^{N})$ be a positive solution of  $(-\Delta)^{s}u= f(u)$ and $F(u)\in L^{1}(\mathbb{R}^{N})$, then it hold that $$\frac{N-2s}{2}\int_{\mathbb{R}^{N}}|(-\Delta)^{\frac{s}{2}}u|^{2}dx=N\int_{\mathbb{R}^{N}}F(u)dx,$$ where $F(u)=\int^{u}_{0}f(t)dt$.
 \end{proposition}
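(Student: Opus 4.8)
\emph{Proof proposal.} I would prove the identity via the Caffarelli--Silvestre extension, which converts the nonlocal relation into a classical Rellich--Pohozaev computation for a degenerate-elliptic equation on a half-space. The guiding heuristic is dilation invariance: with $u_{\sigma}(x):=u(x/\sigma)$ one has
\[
\frac12\int_{\mathbb{R}^{N}}|(-\Delta)^{s/2}u_{\sigma}|^{2}\,dx-\int_{\mathbb{R}^{N}}F(u_{\sigma})\,dx
=\frac{\sigma^{N-2s}}{2}\int_{\mathbb{R}^{N}}|(-\Delta)^{s/2}u|^{2}\,dx-\sigma^{N}\int_{\mathbb{R}^{N}}F(u)\,dx,
\]
and, $u$ being a solution of $(-\Delta)^{s}u=f(u)$, the right-hand side should be stationary at $\sigma=1$, which is precisely the asserted identity. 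Since the corresponding variation $x\cdot\nabla u$ need not belong to $H^{s}(\mathbb{R}^{N})$, I would not differentiate this energy directly but instead perform the Pohozaev computation on the extension, where the integrations by parts are classical. (A direct argument testing the singular-integral form of $(-\Delta)^{s}u=f(u)$ against $x\cdot\nabla u$ is also possible, but requires extra care with the nonlocal tail.)

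First, pass to the $s$-harmonic extension: for $u\in H^{s}(\mathbb{R}^{N})$ let $U=U(x,y)$ on $\mathbb{R}^{N+1}_{+}=\mathbb{R}^{N}\times(0,\infty)$ solve $\operatorname{div}(y^{1-2s}\nabla U)=0$ with $U(\cdot,0)=u$, so that $\int_{\mathbb{R}^{N+1}_{+}}y^{1-2s}|\nabla U|^{2}\,dx\,dy=\kappa_{s}\int_{\mathbb{R}^{N}}|(-\Delta)^{s/2}u|^{2}\,dx$ for an explicit constant $\kappa_{s}>0$, and, since $u$ solves the equation, the conormal derivative satisfies $-\lim_{y\to0^{+}}y^{1-2s}\partial_{y}U=\kappa_{s}f(u)$ on $\mathbb{R}^{N}\times\{0\}$. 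Next, using $u\in L^{\infty}(\mathbb{R}^{N})$ and bootstrapping the local regularity theory for the degenerate operator $\operatorname{div}(y^{1-2s}\nabla\,\cdot\,)$ (De Giorgi--Nash--Moser and Schauder-type estimates), one gets that $u$ is continuous with $u,\nabla u\to0$ at infinity and that $U$ together with $y^{1-2s}\nabla U$ are continuous up to $\{y=0\}$ with enough integrability and decay on the spherical caps $\Gamma_{R}:=\partial B_{R}\cap\mathbb{R}^{N+1}_{+}$; the hypothesis $F(u)\in L^{1}(\mathbb{R}^{N})$ makes the right-hand side of the identity finite.

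Now multiply $\operatorname{div}(y^{1-2s}\nabla U)=0$ by the Pohozaev multiplier $z\cdot\nabla U$, $z=(x,y)$, and integrate over the half-ball $B_{R}^{+}$. An integration by parts produces the interior term $\frac{N-2s}{2}\int_{B_{R}^{+}}y^{1-2s}|\nabla U|^{2}\,dz$ --- the coefficient $\frac{N-2s}{2}=\frac{(N+1)+(1-2s)-2}{2}$ reflecting the homogeneity of the weight --- the flat-boundary term $\kappa_{s}\int_{B_{R}}f(u)\,(x\cdot\nabla u)\,dx$ coming from the conormal condition, and a remainder supported on $\Gamma_{R}$. Rewriting $f(u)(x\cdot\nabla u)=x\cdot\nabla F(u)$ and integrating by parts in $x$ over $B_{R}$, one reaches
\[
\frac{N-2s}{2}\int_{B_{R}^{+}}y^{1-2s}|\nabla U|^{2}\,dz-N\kappa_{s}\int_{B_{R}}F(u)\,dx=\widetilde{R}_{R},
\]
where $\widetilde{R}_{R}$ gathers all boundary integrals on $\Gamma_{R}$ and on $\partial B_{R}\times\{0\}$. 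Choosing $R_{n}\to\infty$ along which $\widetilde{R}_{R_{n}}\to0$ (possible since the integrands are integrable in the radial variable, by the decay from the previous step), letting $n\to\infty$, substituting $\int_{\mathbb{R}^{N+1}_{+}}y^{1-2s}|\nabla U|^{2}=\kappa_{s}\int_{\mathbb{R}^{N}}|(-\Delta)^{s/2}u|^{2}$ and dividing by $\kappa_{s}$ gives
\[
\frac{N-2s}{2}\int_{\mathbb{R}^{N}}|(-\Delta)^{s/2}u|^{2}\,dx=N\int_{\mathbb{R}^{N}}F(u)\,dx.
\]

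The main obstacle is the regularity/decay step together with the vanishing of $\widetilde{R}_{R}$ along a subsequence: one needs quantitative control, near $\{y=0\}$ (where the Muckenhoupt weight $y^{1-2s}$ degenerates when $s<1/2$ and is singular when $s>1/2$) and at spatial infinity, of $U$ and $y^{1-2s}\nabla U$ on large half-spheres, which rests on the local regularity theory for the degenerate equation, on the boundedness $u\in L^{\infty}(\mathbb{R}^{N})$, and on $F(u)\in L^{1}(\mathbb{R}^{N})$. Once these estimates are secured, the remainder is routine accounting of boundary terms. All of this is carried out in \cite{BHAKTA}, Theorem~A.1, which we simply invoke here.
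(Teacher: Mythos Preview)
The paper does not prove this proposition at all; it is simply quoted from \cite{BHAKTA}, Theorem~A.1, and used as a black box. Your proposal likewise ends by invoking that reference, so in that sense you match the paper exactly. The sketch you add on top --- pass to the Caffarelli--Silvestre extension, apply the Rellich--Pohozaev multiplier $z\cdot\nabla U$ on half-balls, and kill the boundary remainders along a subsequence of radii --- is the standard route and is correct in outline; there is nothing to compare it against here because the paper supplies no argument of its own.
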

\begin{remark}
{\rm Since $u\in H^{s}(\mathbb{R}^{N})$, by the fractional Sobolev embedding theorem (see \cite[Theorem 2.2]{Sec16}), it is easy to see $u\in L^{p}(\mathbb{R}^{N}),\ p\in [2,2_{s}^{\ast}],$ which implies that  $F(u)=\frac{\lambda}{2} u^{2} +\frac{\mu}{q}|u|^{q}+\frac{1}{2_{s}^{\ast}}|u|^{2_{s}^{\ast}}\in L^{1}(\mathbb{R}^{N})$, hence we can modify  the proof of Proposition 5.1 in \cite{BCPS} to obtain that $u\in L^{\infty}B(0,\frac{r}{2})$. Using the same arguments for a neighborhood of any $x\in \mathbb{R}^{N}$, we get $u\in L_{loc}^{\infty}(\mathbb{R}^{N})$. Thus we can use similar arguments as in the proof of Theorem 3.4 in \cite{PAT12} to obtain that $u\in L^{\infty}(\mathbb{R}^{N})$, which implies that the above Pohozaev identity can be applied to our equation. In fact, similar method to prove $u\in L^{\infty}(\mathbb{R}^{N})$ can also be used to prove Proposition 4.1 in \cite{CW13}.}
\end{remark}
 \begin{lemma}\label{Lemp}
Let  $u\in H^{s}(\mathbb{R}^{N})$ is a solution of \eqref{int1}, then
\begin{equation}\label{k1}
\mathcal{P}_{a,\mu}=\{u\in S_{a}:P_{\mu}(u)=0\},
\end{equation}
where $$P_{\mu}(u)=s||u||^{2}_{D_{s}(\mathbb{R}^{N})}-\mu\gamma_{q,s}s\int_{\mathbb{R}^{N}} |u|^{q}dx-s\int_{\mathbb{R}^{N}} |u|^{2_{s}^{\ast}}dx.$$
\end{lemma}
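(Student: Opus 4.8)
The plan is to derive the identity $P_{\mu}(u)=0$ from two scalar relations that every solution of \eqref{int1} must satisfy: the Nehari-type relation obtained by testing the equation against $u$ itself, and the Pohozaev identity of Proposition \ref{pro}. The point is that each of these relates the three ``energy pieces'' $\|u\|^{2}_{D_{s}(\mathbb{R}^{N})}$, $\int_{\mathbb{R}^{N}}|u|^{q}\,dx$, $\int_{\mathbb{R}^{N}}|u|^{2_{s}^{\ast}}\,dx$ and $\lambda\int_{\mathbb{R}^{N}}u^{2}\,dx$ linearly, so eliminating the $\lambda$-term between them leaves exactly the combination $P_{\mu}(u)$. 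Before using either relation I would record that, by the remark following Proposition \ref{pro}, a solution $u\in H^{s}(\mathbb{R}^{N})$ of \eqref{int1} in fact belongs to $L^{\infty}(\mathbb{R}^{N})$ and satisfies $F(u)=\tfrac{\lambda}{2}u^{2}+\tfrac{\mu}{q}|u|^{q}+\tfrac{1}{2_{s}^{\ast}}|u|^{2_{s}^{\ast}}\in L^{1}(\mathbb{R}^{N})$, so that Proposition \ref{pro} is applicable.

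Carrying this out: pairing \eqref{int1} with $u$ gives
$$\|u\|^{2}_{D_{s}(\mathbb{R}^{N})}=\lambda\int_{\mathbb{R}^{N}}u^{2}\,dx+\mu\int_{\mathbb{R}^{N}}|u|^{q}\,dx+\int_{\mathbb{R}^{N}}|u|^{2_{s}^{\ast}}\,dx,$$
and Proposition \ref{pro}, applied with $f(u)=\lambda u+\mu|u|^{q-2}u+|u|^{2_{s}^{\ast}-2}u$, gives
$$\frac{N-2s}{2}\|u\|^{2}_{D_{s}(\mathbb{R}^{N})}=N\left(\frac{\lambda}{2}\int_{\mathbb{R}^{N}}u^{2}\,dx+\frac{\mu}{q}\int_{\mathbb{R}^{N}}|u|^{q}\,dx+\frac{1}{2_{s}^{\ast}}\int_{\mathbb{R}^{N}}|u|^{2_{s}^{\ast}}\,dx\right).$$
I would then solve the first relation for $\lambda\int_{\mathbb{R}^{N}}u^{2}\,dx$ and substitute into the second. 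Collecting terms and using the elementary identities $\frac{N}{2}-\frac{N-2s}{2}=s$, $\ N\bigl(\frac{1}{2}-\frac{1}{q}\bigr)=s\gamma_{q,s}$ (which is just the definition of $\gamma_{q,s}$) and $N\bigl(\frac{1}{2}-\frac{1}{2_{s}^{\ast}}\bigr)=s$ (from $2_{s}^{\ast}=\frac{2N}{N-2s}$), the $\lambda$-dependent contributions cancel and one is left precisely with
$$s\|u\|^{2}_{D_{s}(\mathbb{R}^{N})}-\mu\gamma_{q,s}s\int_{\mathbb{R}^{N}}|u|^{q}\,dx-s\int_{\mathbb{R}^{N}}|u|^{2_{s}^{\ast}}\,dx=0,$$
that is, $P_{\mu}(u)=0$; since $u\in S_{a}$ by \eqref{in2}, this gives $u\in\mathcal{P}_{a,\mu}$.

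The algebra here is routine, so the only step that genuinely needs justification is the applicability of the Pohozaev identity, namely that $u\in L^{\infty}(\mathbb{R}^{N})$ and $F(u)\in L^{1}(\mathbb{R}^{N})$; this is exactly what the remark after Proposition \ref{pro} provides (via the Moser-type bootstrap drawn from the cited references), so I would simply invoke it rather than reprove it. I would also note, as a byproduct used repeatedly later, that the same computation applies to any critical point of $E_{\mu}|_{S_{a}}$ — which solves \eqref{int1} with $\lambda$ equal to its Lagrange multiplier — so every such critical point lies in $\mathcal{P}_{a,\mu}$; equivalently, $P_{\mu}(u)$ coincides with the derivative at $t=0$ of the fiber map $\Psi^{\mu}_{u}(t)=E_{\mu}(t\star u)$ along the $L^{2}$-preserving dilation, which is the viewpoint exploited in the rest of the paper.
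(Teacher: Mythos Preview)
Your proof is correct and follows essentially the same approach as the paper: combine the Nehari-type identity obtained by testing \eqref{int1} with $u$ and the Pohozaev identity from Proposition~\ref{pro}, then eliminate the $\lambda$-term to obtain $P_{\mu}(u)=0$. Your explicit justification of the applicability of Proposition~\ref{pro} via the remark, and your identification of the algebraic identities $\tfrac{N}{2}-\tfrac{N-2s}{2}=s$, $N(\tfrac12-\tfrac1q)=s\gamma_{q,s}$, $N(\tfrac12-\tfrac{1}{2_{s}^{\ast}})=s$, make the computation slightly more transparent than in the paper, but the argument is the same.
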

\begin{proof}
From Proposition \ref{pro}, we have
\begin{equation}\label{intk}
\frac{N-2s}{2}||u||^{2}_{D_{s}(\mathbb{R}^{N})}=\lambda\frac{ N}{2}\int_{\mathbb{R}^{N}} u^{2}dx+\frac{ N\mu }{q}\int_{\mathbb{R}^{N}} |u|^{q}dx+\frac{ N }{2_{s}^{\ast}}\int_{\mathbb{R}^{N}} |u|^{2_{s}^{\ast}}dx.
\end{equation}
 Since $u$ is a solution of \eqref{int1}, we have
\begin{equation}\label{intp}
||u||^{2}_{D_{s}(\mathbb{R}^{N})}=\lambda\int_{\mathbb{R}^{N}} u^{2}dx+\mu\int_{\mathbb{R}^{N}} |u|^{q}dx+\int_{\mathbb{R}^{N}} |u|^{2_{s}^{\ast}}dx.
\end{equation}
Combining \eqref{intk} with \eqref{intp}, we obtain
$$s||u||^{2}_{D_{s}(\mathbb{R}^{N})}=\mu\gamma_{q,s}s\int_{\mathbb{R}^{N}}|u|^{q}dx+s\int_{\mathbb{R}^{N}}|u|^{2_{s}^{\ast}}dx.$$
As desired.
\end{proof}
Define
 \begin{equation}\label{k}
 (t\star u)(x)=e^{\frac{Nt}{2}}u(e^{t}x)\ \text{for a.e. } x\in\mathbb{R}^{N},
  \end{equation}
it is easy to see that $t\star u\in S_{a}.$ We define the fiber map as follows:
\begin{equation}\label{pp}
 \Psi^{\mu}_{u}(t)=E_{\mu}(t\star u)=\frac{e^{2st}}{2}||u||^{2}_{D_{s}(\mathbb{R}^{N})}-\mu\frac{e^{q\gamma_{q,s}st}}{q}\int_{\mathbb{R}^{N}} |u|^{q}dx-\frac{e^{2_{s}^{\ast}st}}{2_{s}^{\ast}}\int_{\mathbb{R}^{N}} |u|^{2_{s}^{\ast}}dx.
\end{equation}
It is easy to see that $(\Psi^{\mu}_{u})'(t)=P_{\mu}(t\star u)$, so that $t$ is a critical point of $\Psi^{\mu}_{u}(t)$ if and only if $t\star u\in \mathcal{P}_{(a,\mu)}$ and in particular $u\in \mathcal{P}_{(a,\mu)}$ if and only if $0$ is a critical point of $\Psi^{\mu}_{u}(t)$.

We  split $\mathcal{P}_{a,\mu}$ into three parts.
\begin{align*}
\mathcal{P}^{+}_{a,\mu}&=\bigg\{u\in \mathcal{P}_{a,\mu}\mid (\Psi^{\mu}_{u})''(0) >0\bigg\}\\
&=\bigg\{u\in \mathcal{P}_{a,\mu}\mid 2s^{2}||u||^{2}_{D_{s}(\mathbb{R}^{N})} >\mu q\gamma^{2}_{q,s}s^{2}\int_{\mathbb{R}^{N}} |u|^{q}dx+2_{s}^{\ast}s^{2}\int_{\mathbb{R}^{N}} |u|^{2_{s}^{\ast}}dx\bigg\},
\end{align*}
\begin{align*}
\mathcal{P}^{0}_{a,\mu}&=\bigg\{u\in \mathcal{P}_{a,\mu}\mid (\Psi^{\mu}_{u})''(0) =0\bigg\}\\
&=\bigg\{u\in \mathcal{P}_{a,\mu}\mid 2s^{2}||u||^{2}_{D_{s}(\mathbb{R}^{N})} =\mu q\gamma^{2}_{q,s}s^{2}\int_{\mathbb{R}^{N}} |u|^{q}dx+2_{s}^{\ast}s^{2}\int_{\mathbb{R}^{N}} |u|^{2_{s}^{\ast}}dx\bigg\},
\end{align*}
\begin{align}\label{k2}
\mathcal{P}^{-}_{a,\mu}&=\bigg\{u\in \mathcal{P}_{a,\mu}\mid (\Psi^{\mu}_{u})''(0) <0\bigg\}\\\nonumber
&=\bigg\{u\in \mathcal{P}_{a,\mu}\mid 2s^{2}||u||^{2}_{D_{s}(\mathbb{R}^{N})}< \mu q\gamma^{2}_{q,s}s^{2}\int_{\mathbb{R}^{N}} |u|^{q}dx+2_{s}^{\ast}s^{2}\int_{\mathbb{R}^{N}} |u|^{2_{s}^{\ast}}dx\bigg\}.
\end{align}
It is easy to see that $$\mathcal{P}_{a,\mu}=\mathcal{P}^{+}_{a,\mu}\cup \mathcal{P}^{0}_{a,\mu}\cup \mathcal{P}^{-}_{a,\mu}.$$

\begin{lemma}\label{Lem5}
Let $N>2s,\  2<q<2_{s}^{\ast}$ and $a,\mu>0$. Let $\{u_{n}\}\subset S_{a,r}=S_{a}\cap H^{s}(\mathbb{R}^{N})$ be a Palais-Smale  sequence for $E_{\mu}|_{S_{a}}$ at level $m(a,\mu)$. Then $\{u_{n}\}$ is bounded in $H^{s}(\mathbb{R}^{N})$.
\end{lemma}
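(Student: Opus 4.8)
The plan is to reduce the claim to an a priori bound on $\|u_n\|_{D_s(\mathbb{R}^{N})}$ and to obtain that bound by combining two asymptotic identities. Since the $L^{2}$-norm is frozen along $S_{a}$, one has $\|u_n\|_{H^{s}(\mathbb{R}^{N})}^{2}=\|u_n\|_{D_s(\mathbb{R}^{N})}^{2}+a^{2}$, so it suffices to control $D_n:=\|u_n\|_{D_s(\mathbb{R}^{N})}^{2}$. The first identity is $E_{\mu}(u_n)=m(a,\mu)+o(1)$; the second is the asymptotic Pohozaev balance $P_{\mu}(u_n)=o(1)$. The latter is not a property of an arbitrary Palais--Smale sequence of $E_{\mu}|_{S_{a}}$, but it holds for the sequences actually used here, since in Jeanjean's framework one takes $\{u_n\}$ to be also almost critical for the fiber map $\Psi^{\mu}_{u_n}$ of \eqref{pp} and $(\Psi^{\mu}_{u_n})'(0)=P_{\mu}(u_n)$. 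Writing $Q_n=\int_{\mathbb{R}^{N}}|u_n|^{q}\,dx$ and $K_n=\int_{\mathbb{R}^{N}}|u_n|^{2_{s}^{\ast}}\,dx$, the two identities read
\[
\tfrac12 D_n-\tfrac{\mu}{q}Q_n-\tfrac{1}{2_{s}^{\ast}}K_n=m(a,\mu)+o(1),\qquad D_n=\mu\gamma_{q,s}Q_n+K_n+o(1).
\]
The idea is to take the linear combination of these that cancels one of $Q_n,K_n$ and leaves a coercive expression in $D_n$; which integral to cancel is dictated by the sign of $q\gamma_{q,s}-2$, cf. \eqref{int45}.

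\emph{Case $2<q\le\overline{p}$.} I would substitute $K_n=D_n-\mu\gamma_{q,s}Q_n+o(1)$ into the first identity, which eliminates the critical term and, after using $\tfrac12-\tfrac{1}{2_{s}^{\ast}}=\tfrac{s}{N}$, gives
\[
\tfrac{s}{N}\,D_n=m(a,\mu)+\mu\,\frac{2_{s}^{\ast}-q\gamma_{q,s}}{q\,2_{s}^{\ast}}\,Q_n+o(1),\qquad 2_{s}^{\ast}-q\gamma_{q,s}>0.
\]
The fractional Gagliardo--Nirenberg inequality \eqref{4} gives $Q_n\le C_{N,q,s}\,a^{q(1-\gamma_{q,s})}D_n^{\,q\gamma_{q,s}/2}$, hence $\tfrac{s}{N}D_n\le m(a,\mu)+C\mu a^{q(1-\gamma_{q,s})}D_n^{\,q\gamma_{q,s}/2}+o(1)$ for some $C>0$. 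If $q<\overline{p}$, then $q\gamma_{q,s}<2$ and the right-hand side is sublinear in $D_n$, so $\{D_n\}$ is bounded; if $q=\overline{p}$, then $q\gamma_{q,s}=2$ and, under the smallness of $\mu a^{4s/N}$ from \eqref{U200} (the only regime where this case is invoked), the term $C\mu a^{4s/N}D_n$ can be absorbed on the left, again giving boundedness.

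\emph{Case $\overline{p}\le q<2_{s}^{\ast}$.} Instead I would substitute $Q_n=(\mu\gamma_{q,s})^{-1}\bigl(D_n-K_n+o(1)\bigr)$ into the first identity, which eliminates the $q$-term and produces
\[
m(a,\mu)+o(1)=\Bigl(\tfrac12-\tfrac{1}{q\gamma_{q,s}}\Bigr)D_n+\Bigl(\tfrac{1}{q\gamma_{q,s}}-\tfrac{1}{2_{s}^{\ast}}\Bigr)K_n+o(1).
\]
For $q>\overline{p}$ one has $q\gamma_{q,s}>2$ by \eqref{int45}, so the coefficient of $D_n$ is positive, while $\gamma_{q,s}<1$ forces $q\gamma_{q,s}<q<2_{s}^{\ast}$, so the coefficient of $K_n$ is positive as well; discarding the nonnegative $K_n$ yields $\bigl(\tfrac12-\tfrac{1}{q\gamma_{q,s}}\bigr)D_n\le m(a,\mu)+o(1)$, hence $\{D_n\}$ is bounded. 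The borderline $q=\overline{p}$ is already covered by the previous case.

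The algebra of the two eliminations and the Gagliardo--Nirenberg step are routine. The genuinely delicate points are two: the $L^{2}$-critical value $q=\overline{p}$, where coercivity is unavailable unless $\mu a^{4s/N}$ is small, so the argument there really depends on \eqref{U200}; and, logically prior to all of the above, ensuring that $P_{\mu}(u_n)=o(1)$, which holds precisely because the Palais--Smale sequences considered are produced by the Jeanjean/Pohozaev-manifold minimax scheme and are not arbitrary Palais--Smale sequences of $E_{\mu}|_{S_{a}}$.
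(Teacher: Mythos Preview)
Your approach is essentially the paper's: both combine $E_\mu(u_n)=m(a,\mu)+o(1)$ with $P_\mu(u_n)=o(1)$ to eliminate one of the nonlinear integrals, then use Gagliardo--Nirenberg in the subcritical case and positivity of the surviving coefficients in the supercritical case. You also correctly flag that $P_\mu(u_n)\to 0$ is an extra hypothesis not written in the lemma's statement but used freely in the paper's own proof.

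The one substantive difference is at $q=\overline{p}$. Your elimination of $K_n$ leaves a term linear in $D_n$ which you absorb only under the smallness assumption \eqref{U200}. The paper instead eliminates $D_n$: since $\gamma_{\overline{p},s}\,\overline{p}=2$, one has $\tfrac{\gamma_{\overline{p},s}}{2}=\tfrac{1}{\overline{p}}$, so the $Q_n$ contribution cancels exactly and $E_\mu(u_n)=\tfrac{s}{N}K_n+o(1)$, giving $K_n$ bounded with no restriction on $\mu$. Then $Q_n$ is bounded by interpolation $\int|u_n|^{q}\le\bigl(\int|u_n|^{2}\bigr)^{\alpha}\bigl(\int|u_n|^{2_s^\ast}\bigr)^{1-\alpha}$, and finally $D_n=\mu\gamma_{\overline{p},s}Q_n+K_n+o(1)$ is bounded. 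This makes the paper's argument at $q=\overline{p}$ unconditional in $\mu$, matching the lemma's stated hypotheses, whereas your version proves strictly less there.
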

\begin{proof}
{\bf Case 1:}  $q<\overline{p}$. This yields that $\gamma_{q,s}q<2$. Since $P_{\mu}(u_{n})\rightarrow 0 $, we have  $$s||u||^{2}_{D_{s}(\mathbb{R}^{N})}-\mu\gamma_{q,s}s\int_{\mathbb{R}^{N}} |u|^{q}dx-s\int_{\mathbb{R}^{N}} |u|^{2_{s}^{\ast}}dx=o_{n}(1).$$
Thus, by fractional Gagliardo-Nirenberg-Sobolev inequality \eqref{U3}, we have

 \begin{align*}
E_{\mu}(u_{n})&=\frac{s}{N}||u_{n}||^{2}_{D_{s}(\mathbb{R}^{N})}-\frac{\mu}{q}\left(1-\frac{q\gamma_{q,s}}{2_{s}^{\ast}}\right)\int_{\mathbb{R}^{N}} |u_{n}|^{q}dx+o_{n}(1)\\
&\geq\frac{s}{N}||u_{n}||^{2}_{D_{s}(\mathbb{R}^{N})}-\frac{\mu}{q}\left(1-\frac{q\gamma_{q,s}}{2_{s}^{\ast}}\right)C^{q}_{N,q,s}||u_{n}||^{q\gamma_{q,s}}_{D_{s}(\mathbb{R}^{N})}a^{q(1-\gamma_{q,s})}.
\end{align*}
Since $\{u_{n}\}$ is  a Palais-Smale  sequence for $E_{\mu}|_{S_{a}}$ at level $m(a,\mu)$, we have $E_{\mu}(u_{n})\leq m+1$ for $n$ large. Hence $$\frac{s}{N}||u_{n}||^{2}_{D_{s}(\mathbb{R}^{N})}\leq \frac{\mu}{q}\left(1-\frac{q\gamma_{q,s}}{2_{s}^{\ast}}\right)C^{q}_{N,q,s}||u_{n}||^{q\gamma_{q,s}}_{D_{s}(\mathbb{R}^{N})}a^{q(1-\gamma_{q,s})}+m(a,\mu)+2 ,$$ which implies that $\{u_{n}\}$ is bounded in $H^{s}(\mathbb{R}^{N})$.

\vspace{1mm}

{\bf Case 2:} $q=\overline{p}$. Then $\gamma_{\overline{p},s}\overline{p}=2$. Since  $P_{\mu}(u_{n})\rightarrow 0 $, we know
\begin{align}\label{U4}
||u||^{2}_{D_{s}(\mathbb{R}^{N})}-\mu\gamma_{\overline{p},s}\int_{\mathbb{R}^{N}} |u|^{\overline{p}}dx-\int_{\mathbb{R}^{N}} |u|^{2_{s}^{\ast}}dx=o_{n}(1).
\end{align}
Thus, $$E_{\mu}(u_{n})=\frac{s}{N}\int_{\mathbb{R}^{N}} |u_{n}|^{2_{s}^{\ast}}dx+o_{n}(1)\leq +m(a,\mu)+1\Rightarrow \int_{\mathbb{R}^{N}} |u_{n}|^{2_{s}^{\ast}}dx\leq C.$$
Since $q\in(2,2_{s}^{\ast})$, we have $q=\alpha2+(1-\alpha)2_{s}^{\ast}$ for suitable $\alpha\in (0,1)$, so by H\"{o}lder's inequality, we have $$\int_{\mathbb{R}^{N}} |u_{n}|^{q}dx\leq\left(\int_{\mathbb{R}^{N}} |u_{n}|^{2}dx\right)^{\alpha}\left(\int_{\mathbb{R}^{N}} |u_{n}|^{2_{s}^{\ast}}dx\right)^{1-\alpha}\leq C.$$
Thus, from \eqref{U4}, we know that $$||u||^{2}_{D_{s}(\mathbb{R}^{N})}=\mu\gamma_{\overline{p},s}\int_{\mathbb{R}^{N}} |u|^{\overline{p}}dx+\int_{\mathbb{R}^{N}} |u|^{2_{s}^{\ast}}dx\leq C.$$
{\bf Case 3:} $\overline{p}<q<2_{s}^{\ast}$. This implies that $\gamma_{q,s}q>2$. Since  $P_{\mu}(u_{n})\rightarrow 0 $, we know
\begin{align*}
||u||^{2}_{D_{s}(\mathbb{R}^{N})}-\mu\gamma_{q,s}\int_{\mathbb{R}^{N}} |u|^{q}dx-\int_{\mathbb{R}^{N}} |u|^{2_{s}^{\ast}}dx=o_{n}(1).
\end{align*}
Thus $$E_{\mu}(u_{n})=\frac{\mu}{q}\left(\frac{\gamma_{q,s}q}{2}-1\right)\int_{\mathbb{R}^{N}} |u|^{q}dx+\frac{s}{N}\int_{\mathbb{R}^{N}} |u|^{2_{s}^{\ast}}dx\leq m(a,\mu)+1.$$
So $\int_{\mathbb{R}^{N}} |u|^{q}dx$ and $\int_{\mathbb{R}^{N}} |u|^{2_{s}^{\ast}}dx$ are both bounded. Hence
\begin{align*}
||u||^{2}_{D_{s}(\mathbb{R}^{N})}=\mu\gamma_{q,s}\int_{\mathbb{R}^{N}} |u|^{q}dx+\int_{\mathbb{R}^{N}} |u|^{2_{s}^{\ast}}dx+o_{n}(1)\leq C.
\end{align*}
This completes the proof.
\end{proof}
\begin{proposition}\label{LM5}
Let $N>2s, 2<q<2_{s}^{\ast}$ and $a,\mu>0$. Let $\{u_{n}\}\subset S_{a,r}=S_{a}\bigcap H^{s}(\mathbb{R}^{N})$ be a Palais-Smale  sequence for $E_{\mu}|_{S_{a}}$ at level $m(a,\mu)$ with $$m(a,\mu)<\frac{s}{N}S^{\frac{N}{2s}}_{s}\ \text{and}\ m\neq 0.$$
Suppose in addition that $\mathcal{P}_{a,\mu}(u_{n})\rightarrow 0 \ \text{as} \ n\rightarrow+\infty.$ Then one of the following alternatives holds:
\begin{enumerate}
\item[$(i)$]  either up to a subsequence $u_{n}\rightharpoonup u$ weakly in $H^{s}(\mathbb{R}^{N})$ but not strongly , where $u\not\equiv0$ is a solution of \eqref{int1} for some $\lambda<0$, and $$E_{\mu}(u)\leq m(a,\mu)- \frac{s}{N}S^{\frac{N}{2s}}_{s}.$$
\item[$(ii)$] or up to a subsequence $u_{n}\rightarrow u$ strongly in $H^{s}(\mathbb{R}^{N}),$ $E_{\mu}(u)=m(a,\mu)$ and $u$ solves \eqref{int1}--\eqref{in2} for some $\lambda<0.$
\end{enumerate}
\end{proposition}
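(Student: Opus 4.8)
\textbf{Proof proposal for Proposition \ref{LM5}.}

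The plan is to run a standard Brezis--Nirenberg type concentration--compactness argument adapted to the normalized (constrained) setting. First I would invoke Lemma \ref{Lem5} to conclude that $\{u_n\}$ is bounded in $H^s(\mathbb{R}^N)$, so up to a subsequence $u_n\rightharpoonup u$ weakly in $H^s(\mathbb{R}^N)$, $u_n\to u$ in $L^q_{loc}$ (and, using radial symmetry, in $L^q(\mathbb{R}^N)$ for $q\in(2,2_s^*)$ by the compact embedding $H^s_{rad}\hookrightarrow L^q$), and $u_n\to u$ a.e. Next, because $\{u_n\}$ is a Palais--Smale sequence for the constrained functional, the Lagrange multiplier rule gives a sequence $\lambda_n\in\mathbb{R}$ with $E_\mu'(u_n)-\lambda_n u_n\to 0$ in $H^{-s}$; testing against $u_n$ and using boundedness together with $P_\mu(u_n)\to 0$ shows $\{\lambda_n\}$ is bounded, so $\lambda_n\to\lambda$ up to a subsequence. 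Passing to the limit in the equation, $u$ solves $(-\Delta)^s u=\lambda u+\mu|u|^{q-2}u+|u|^{2_s^*-2}u$ weakly. To pin down the sign of $\lambda$ I would combine the limiting equation, its Pohozaev identity (Lemma \ref{Lemp}/Proposition \ref{pro}), and the relation $P_\mu(u_n)\to0$: these force $\lambda\int u^2=$ (a combination of the norm and the $L^q$, $L^{2_s^*}$ terms of $u$) which, under $m\neq0$ and the energy bound, is strictly negative — this is where the hypotheses $m\neq0$ and $m<\frac{s}{N}S_s^{N/2s}$ enter to exclude $u\equiv0$ and $\lambda\ge 0$.

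The heart of the proof is the analysis of the defect of compactness in $D_s(\mathbb{R}^N)$. Set $v_n=u_n-u$; by Brezis--Lieb, $\|u_n\|_{D_s}^2=\|u\|_{D_s}^2+\|v_n\|_{D_s}^2+o_n(1)$ and $\int|u_n|^{2_s^*}=\int|u|^{2_s^*}+\int|v_n|^{2_s^*}+o_n(1)$, while $\int|v_n|^q\to0$. Using that $u$ is a critical point of $E_\mu|_{S_a}$ in the limit (so $\|u\|_{D_s}^2-\lambda\|u\|_{L^2}^2-\mu\int|u|^q-\int|u|^{2_s^*}=0$ and $P_\mu(u)=0$), subtracting from the corresponding relations for $u_n$ yields $\|v_n\|_{D_s}^2-\int|v_n|^{2_s^*}=o_n(1)$, i.e.\ $\|v_n\|_{D_s}^2\to\ell$ and $\int|v_n|^{2_s^*}\to\ell$ for some $\ell\ge0$. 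The Sobolev inequality \eqref{int2} then gives $\ell\ge S_s\ell^{2/2_s^*}$, so either $\ell=0$ (which yields $u_n\to u$ strongly in $D_s$, hence — since $\|u_n\|_{L^2}=a$ and $u_n\to u$ in $L^2$ by radial compactness of lower-order terms plus the constraint — strongly in $H^s$, giving alternative (ii)) or $\ell\ge S_s^{N/2s}$. In the latter case, computing the energy, $m(a,\mu)+o_n(1)=E_\mu(u_n)=E_\mu(u)+\frac{s}{N}\|v_n\|_{D_s}^2+o_n(1)\ge E_\mu(u)+\frac{s}{N}S_s^{N/2s}$, which is exactly the bound in alternative (i), and combined with $m(a,\mu)<\frac{s}{N}S_s^{N/2s}$ forces $E_\mu(u)<0$, in particular $u\not\equiv0$ and the weak convergence is not strong.

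The main obstacle I anticipate is the passage $u_n\to u$ in $L^2(\mathbb{R}^N)$, which is needed both to control the constraint (ensuring the limit lies on or close to $S_a$) and, in alternative (ii), to upgrade $D_s$-convergence to $H^s$-convergence; the embedding $H^s_{rad}(\mathbb{R}^N)\hookrightarrow L^2(\mathbb{R}^N)$ is \emph{not} compact, so one cannot simply quote it. Here I would exploit the sign of $\lambda$: since $\lambda<0$, the quadratic form $u\mapsto\|u\|_{D_s}^2-\lambda\|u\|_{L^2}^2$ is equivalent to the full $H^s$ norm, and testing the equations $E_\mu'(u_n)-\lambda_n u_n\to0$, $E_\mu'(u)-\lambda u=0$ against $u_n-u$, using the already-established $D_s$ and $L^q$ convergences, will close the argument and yield $\|u_n-u\|_{L^2}\to0$ (at least in the compact case $\ell=0$). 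A secondary delicate point, as flagged in the introduction, is ruling out the purely "nonlocal vanishing" scenario when $\ell>0$ but the profile at infinity is not a rescaled Aubin--Talenti bubble; this is handled precisely by the strict energy inequality $m<\frac{s}{N}S_s^{N/2s}$, which the earlier test-function estimates guarantee under the hypotheses of Theorems \ref{TH1}--\ref{TH3}, so that alternative (i) — which already encodes $E_\mu(u)\le m-\frac{s}{N}S_s^{N/2s}<0$ — can only occur with a genuine nontrivial solution $u$.
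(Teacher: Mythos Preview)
Your proposal is correct and follows essentially the same route as the paper's proof: boundedness via Lemma~\ref{Lem5}, extraction of a weak limit with strong $L^q$ convergence by radial compactness, identification of the Lagrange multiplier limit $\lambda$ via the relation $\lambda a^2=\mu(\gamma_{q,s}-1)\int|u|^q\le0$, exclusion of $u\equiv0$ using the two hypotheses $m\neq0$ and $m<\tfrac{s}{N}S_s^{N/2s}$, and then the Br\'ezis--Lieb splitting on $v_n=u_n-u$ to obtain the dichotomy $\ell=0$ versus $\ell\ge S_s^{N/2s}$, with the $L^2$ convergence in the compact case recovered from $\lambda<0$ by testing the equations against $u_n-u$. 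The only point to tidy up is the internal inconsistency in your write-up: you first assert $u_n\to u$ in $L^2$ ``by radial compactness of lower-order terms plus the constraint'' and only afterwards correctly flag that $H^s_{rad}\hookrightarrow L^2$ is not compact and supply the $\lambda<0$ argument---just delete the first, erroneous justification and rely on the second one (which is exactly what the paper does).
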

\begin{proof}
By Lemma \ref{Lem5}, we know that the sequence $\{u_{n}\}$ is bounded in $H^{s}(\mathbb{R}^{N})$, which is radial functions, and by compactness of $H_{rad}^{s}(\mathbb{R}^{N})\hookrightarrow\hookrightarrow L^{q}(\mathbb{R}^{N})$, which implies that $$u_{n}\rightharpoonup u \  \text{in}\  H^{s}(\mathbb{R}^{N}),\ \ u_{n}\rightarrow  u \  \text{in}\ L^{q}(\mathbb{R}^{N})  \ a.e\  \text{in}\ \mathbb{R}^{N} .$$ Since $\{u_{n}\}$ is a bounded Palais-Smale  sequence for $E_{\mu}|_{S_{a}}$ at level $m(a,\mu)$, by Lagrange multipliers rule, there exists $\{\lambda_{n}\}\subset \mathbb{R}$  such that for every $\varphi \in H^{s}(\mathbb{R}^{N}) $
\begin{align}\label{U5}
\int_{\mathbb{R}^{N}} \left((-\Delta)^{\frac{s}{2}}u (-\Delta)^{\frac{s}{2}}\varphi dx-\lambda_{n}u_{n}\varphi-\mu|u|^{q-2}u_{n}\varphi-|u|^{2_{s}^{\ast}-2}u_{n}\varphi\right)dx=o_{n}(1)\|\varphi\|, \ \text{as}\ n\rightarrow+\infty.
\end{align}
If we choose that $\varphi=u_{n}$, from \eqref{U5}, it is easy to see that $\{{u_{n}}\}$ is bounded, hence up to a subsequence $\lambda_{n}\rightarrow \lambda\in \mathbb{R} $. By the fact that $P_{\mu}(u_{n})\rightarrow 0 $ and $\gamma_{q,s}<1$, we deduce that
\begin{align}\label{U6}
\lambda a^{2}&=\lim_{n\rightarrow +\infty}\lambda_{n}\int_{\mathbb{R}^{N}} |u_{n}|^{2}dx=\lim_{n\rightarrow +\infty}\left(||u_{n}||^{2}_{D_{s}(\mathbb{R}^{N})}-\int_{\mathbb{R}^{N}}\left(\mu|u_{n}|^{q}+|u_{n}|^{2_{s}^{\ast}}\right)dx\right)\\\nonumber
&=\lim_{n\rightarrow +\infty}\mu(\gamma_{q,s}-1)\int_{\mathbb{R}^{N}}|u_{n}|^{q}dx=\mu(\gamma_{q,s}-1)\int_{\mathbb{R}^{N}}|u|^{q}dx\leq0.
\end{align}
It is easy to see that $\lambda=0$ if and only if $u\equiv 0$. Next, we show that the $u\not\equiv0.$
Assume by contradiction that $u\equiv0$, by $\{u_{n}\}$ is bounded in $H^{s}(\mathbb{R}^{N})$, hence up to a subsequence $||u_{n}||^{2}_{D_{s}(\mathbb{R}^{N})}\rightarrow \ell\in\mathbb{R}$. From $P_{\mu}(u_{n})\rightarrow0$ and $u_{n}\rightarrow 0$ strongly in $L^{q}(\mathbb{R}^{N})$, hence
\begin{align*}
\int_{\mathbb{R}^{N}} |u|^{2_{s}^{\ast}}dx=||u||^{2}_{D_{s}(\mathbb{R}^{N})}-\mu\gamma_{q,s}\int_{\mathbb{R}^{N}} |u|^{q}dx\rightarrow \ell ,
\end{align*}
Therefore, by the definition of $S_{s}$ in \eqref{int2}, we have $\ell\geq S_{s}\ell^{\frac{2}{2_{s}^{\ast}}}$, we can deduce $$\ell=0\ \text{ or}\  \ell\geq S^{\frac{N}{2s}}_{s}.$$
{\bf Case 1.} If $||u_{n}||^{2}_{D_{s}(\mathbb{R}^{N})}\rightarrow \ell=0$, then $\int_{\mathbb{R}^{N}}|u_{n}|^{q}dx\rightarrow0,\ \ \int_{\mathbb{R}^{N}}|u_{n}|^{2_{s}^{\ast}}dx\rightarrow0 $, which implies that $E_{\mu}(u_{n})\rightarrow 0$, this contradict the fact that $E_{\mu}(u_{n})\rightarrow m(a,\mu)$.\\
{\bf Case 2.}  If $\ell\geq S^{\frac{N}{2s}}_{s}$, from $P_{\mu}(u_{n})\rightarrow 0$ and $E_{\mu}(u_{n})\rightarrow m(a,\mu)$, we obtain
\begin{align*}
m(a,\mu)+o_{n}(1)&=E_{\mu}(u_{n})=\frac{s}{N}||u_{n}||^{2}_{D_{s}(\mathbb{R}^{N})}-\frac{\mu}{q}\left(1-\frac{q\gamma_{q,s}}{2_{s}^{\ast}}\right)\int_{\mathbb{R}^{N}} |u|^{q}dx+o_{n}(1)\\
&=\frac{s}{N}||u_{n}||^{2}_{D_{s}(\mathbb{R}^{N})}+o_{n}(1)= \frac{s}{N}\ell+o_{n}(1) ,
\end{align*}
which implies that $$m(a,\mu)=\frac{s}{N}\ell\geq \frac{s}{N}S^{\frac{N}{2s}}_{s},$$ which contradicts our assumptions. Thus, $u\not\equiv0.$ From \eqref{U6}, we know that $\lambda<0$. Pass to the limit in \eqref{U5} by the weak convergence, we obtain that
\begin{align}\label{U8}
(-\Delta)^{s}u=\lambda u +\mu|u|^{q-2}u+|u|^{2_{s}^{\ast}-2}u,\ &x\in\mathbb{R}^{N}.
\end{align}
By the Pohozaev identity, $P_{\mu}(u)=0.$ Let $\sigma_{n}=u_{n}-u$, then $\sigma_{n}\rightharpoonup 0$ in $H^{s}(\mathbb{R}^{N})$. Since
\begin{align}\label{U9}
||u_{n}||^{2}_{D_{s}(\mathbb{R}^{N})}=||\sigma_{n}||^{2}_{D_{s}(\mathbb{R}^{N})}+||u||^{2}_{D_{s}(\mathbb{R}^{N})}+o_{n}(1)
\end{align}
and by the well-known Br\'ezis-Lieb lemma, we get
\begin{align}\label{U10}
\int_{\mathbb{R}^{N}} |u_{n}|^{2_{s}^{\ast}}dx=\int_{\mathbb{R}^{N}} |\sigma_{n}|^{2_{s}^{\ast}}dx+\int_{\mathbb{R}^{N}} |u|^{2_{s}^{\ast}}dx+o_{n}(1)
\end{align}
Therefore, from $P_{\mu}(u_{n})\rightarrow 0$ and $u_{n}\rightarrow u$ in $L^{q}(\mathbb{R}^{N})$, we have $$||\sigma_{n}||^{2}_{D_{s}(\mathbb{R}^{N})}+||u||^{2}_{D_{s}(\mathbb{R}^{N})}=\mu\gamma_{q,s}\int_{\mathbb{R}^{N}} |u|^{q}dx+\int_{\mathbb{R}^{N}} |\sigma_{n}|^{2_{s}^{\ast}}dx+\int_{\mathbb{R}^{N}} |u|^{2_{s}^{\ast}}dx+o_{n}(1).$$Combining this with $P_{\mu}(u)=0,$ we know that $||\sigma_{n}||^{2}_{D_{s}(\mathbb{R}^{N})}=\int_{\mathbb{R}^{N}} |\sigma_{n}|^{2_{s}^{\ast}}dx+o_{n}(1),$ thus $$\lim_{n\rightarrow +\infty}||\sigma_{n}||^{2}_{D_{s}(\mathbb{R}^{N})}=\lim_{n\rightarrow +\infty}\int_{\mathbb{R}^{N}} |\sigma_{n}|^{2_{s}^{\ast}}dx=\ell\geq0.$$By the definition of $S_{s}$ in \eqref{int2}, we have $\ell\geq S_{s}\ell^{\frac{2}{2_{s}^{\ast}}}$, hence we can deduce $$\ell=0\ \text{ or}\  \ell\geq S^{\frac{N}{2s}}_{s}.$$
{\bf Case 1.} $\ell\geq S^{\frac{N}{2s}}_{s}$. By \eqref{U9} and \eqref{U10}, we have
\begin{align*}
m(a,\mu)&=\lim_{n\rightarrow +\infty}E_{\mu}(u_{n})=\lim_{n\rightarrow +\infty}\left(E_{\mu}(u)+\frac{1}{2}||\sigma_{n}||^{2}_{D_{s}(\mathbb{R}^{N})}-\frac{1}{2_{s}^{\ast}}\int_{\mathbb{R}^{N}} |\sigma_{n}|^{2_{s}^{\ast}}dx\right)\\
&=E_{\mu}(u)+\frac{s}{N}\ell\geq E_{\mu}(u)+\frac{s}{N}S^{\frac{N}{2s}}_{s}.
\end{align*}
Thus, the conclusion i) holds, i.e., up to a subsequence, $u_{n}\rightharpoonup u$ weakly in $H^{s}(\mathbb{R}^{N})$ but not strongly, where $u\not\equiv0$ is a solution of \eqref{int1} for some $\lambda<0$, and $$E_{\mu}(u)\leq m(a,\mu)- \frac{s}{N}S^{\frac{N}{2s}}_{s}.$$
{\bf Case 2.} $\ell=0.$ Then $u_{n}\rightarrow u$ strongly in $D_{s}(\mathbb{R}^{N})$, which implies that $u_{n}\rightarrow u$ strongly in $L^{2_{s}^{\ast}}(\mathbb{R}^{N})$ by Sobolev embedding inequality.  Next, we show that $u_{n}\rightarrow u$ strongly in $L^{2}(\mathbb{R}^{N}).$ If we let $\varphi=u_{n}-u$ in \eqref{U5} and multiply $u_{n}-u$ on both side of \eqref{U8}, we obtain
\begin{align*}
&||u_{n}-u||^{2}_{D_{s}(\mathbb{R}^{N})}-\int_{\mathbb{R}^{N}}(\lambda_{n}u_{n}-\lambda u)(u_{n}-u)dx\\
&=\int_{\mathbb{R}^{N}}(|u_{n}|^{q-2}u_{n}-|u|^{q-2}u)(u_{n}-u)dx+\int_{\mathbb{R}^{N}}(|u_{n}|^{2_{s}^{\ast}-2}u_{n}-|u|^{2_{s}^{\ast}-2}u)(u_{n}-u)dx+o_{n}(1)
\end{align*}
Thus, by $u_{n}\rightarrow u$ strongly in $D_{s}(\mathbb{R}^{N})$ and $u_{n}\rightarrow u$ strongly in $L^{2_{s}^{\ast}}(\mathbb{R}^{N})$, we have $$0=\lim_{n\rightarrow +\infty}\int_{\mathbb{R}^{N}}(\lambda_{n}u_{n}-\lambda u)(u_{n}-u)dx=\lim_{n\rightarrow +\infty}\lambda\int_{\mathbb{R}^{N}}(u_{n}-u)^{2}dx,$$
which implies that $u_{n}\rightarrow u$ strongly in $L^{2}(\mathbb{R}^{N})$ by $\lambda<0$. Thus, the conclusion ii) holds, i.e. up to a subsequence $u_{n}\rightarrow u$ strongly in $H^{s}(\mathbb{R}^{N}),$ $E_{\mu}(u)=m(a,\mu)$ and $u$ solves \eqref{int1}--\eqref{in2} for some $\lambda<0.$
The proof is thus complete.
\end{proof}
By the similar arguments as in Proposition \ref{LM5}, we can obtain the following proposition.
\begin{proposition}\label{LM6}
Let $N>2s, 2<q<2_{s}^{\ast}$ and $a,\mu>0$. Let $\{u_{n}\}\subset S_{a}$ be a Palais-Smale  sequence for $E_{\mu}|_{S_{a}}$ at level $m(a,\mu)$ with $$m(a,\mu)<\frac{s}{N}S^{\frac{N}{2s}}_{s}\ \text{and}\ m\neq 0.$$Suppose in addition that $\mathcal{P}_{a,\mu}(u_{n})\rightarrow 0 \ \text{as} \ n\rightarrow+\infty,$ and that there exists $\{v_{n}\}\subset S_{a}$ and $v_{n}$ is radially symmetric for every $n$ such that $\|u_{n}-v_{n}\|\rightarrow 0$ as $n\rightarrow+\infty.$ Then one of the alternatives (i) and (ii) in Proposition \ref{LM5} holds.
\end{proposition}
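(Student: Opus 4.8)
The plan is to reduce Proposition \ref{LM6} to Proposition \ref{LM5} by replacing the Palais-Smale sequence $\{u_n\}\subset S_a$ with a companion sequence lying in the radial space $S_{a,r}=S_a\cap H^s_{rad}(\mathbb{R}^N)$, to which the previous compactness analysis applies verbatim. Since $\|u_n-v_n\|\to0$ and each $v_n$ is radial with $v_n\in S_a$, the first step is to verify that $\{v_n\}$ is itself a Palais-Smale sequence for $E_\mu|_{S_a}$ at the same level $m(a,\mu)$ and with $\mathcal{P}_{a,\mu}(v_n)\to0$. Both $E_\mu$ and $\mathcal{P}_{a,\mu}$ (equivalently $P_\mu$) are $C^1$ on the bounded set where the sequences live — note $\{u_n\}$, hence $\{v_n\}$, is bounded by Lemma \ref{Lem5} — and their differentials are (uniformly) Lipschitz on bounded sets by the fractional Gagliardo-Nirenberg-Sobolev inequality \eqref{U3} together with continuity of the Nemytskii maps $u\mapsto|u|^{q-2}u$ and $u\mapsto|u|^{2_s^*-2}u$ from $H^s$ into $H^{-s}$. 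Therefore $E_\mu(v_n)=E_\mu(u_n)+o_n(1)\to m(a,\mu)$, $\mathcal{P}_{a,\mu}(v_n)=\mathcal{P}_{a,\mu}(u_n)+o_n(1)\to0$, and, since the constraint $S_a$ is a fixed smooth manifold, the constrained gradient satisfies $\big(E_\mu|_{S_a}\big)'(v_n)=\big(E_\mu|_{S_a}\big)'(u_n)+o_n(1)\to0$ in $(T_{v_n}S_a)^*$.

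With $\{v_n\}\subset S_{a,r}$ now a Palais-Smale sequence for $E_\mu|_{S_a}$ at level $m(a,\mu)$, satisfying $m(a,\mu)<\frac{s}{N}S_s^{N/(2s)}$, $m\neq0$, and $\mathcal{P}_{a,\mu}(v_n)\to0$, Proposition \ref{LM5} applies directly to $\{v_n\}$: up to a subsequence either $v_n\rightharpoonup v\not\equiv0$ weakly but not strongly in $H^s(\mathbb{R}^N)$, with $v$ solving \eqref{int1} for some $\lambda<0$ and $E_\mu(v)\le m(a,\mu)-\frac{s}{N}S_s^{N/(2s)}$ (alternative (i)), or $v_n\to v$ strongly in $H^s(\mathbb{R}^N)$ with $E_\mu(v)=m(a,\mu)$ and $v$ solving \eqref{int1}--\eqref{in2} for some $\lambda<0$ (alternative (ii)). Finally, because $\|u_n-v_n\|_{H^s}\to0$, the sequences $\{u_n\}$ and $\{v_n\}$ have the same weak and strong limits along any subsequence: if (ii) holds for $\{v_n\}$ then $u_n=v_n+o_n(1)\to v$ strongly in $H^s$ and alternative (ii) holds for $\{u_n\}$; if (i) holds for $\{v_n\}$ then $u_n\rightharpoonup v$ weakly, the convergence cannot be strong (else $v_n\to v$ strongly, contradicting (i)), and the same $v$, $\lambda<0$ and energy inequality serve for $\{u_n\}$. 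Thus one of the alternatives (i), (ii) of Proposition \ref{LM5} holds for $\{u_n\}$.

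The main obstacle is the very first step — establishing that passing from $\{u_n\}$ to the nearby radial sequence $\{v_n\}$ preserves the Palais-Smale property and the auxiliary conditions $\mathcal{P}_{a,\mu}(v_n)\to0$. This is where one must use that all relevant functionals have locally Lipschitz differentials (so that an $o_n(1)$ perturbation in $H^s$ produces only an $o_n(1)$ change in values and in constrained gradients), and that $\{u_n\}$ is a priori bounded so the Lipschitz constants are uniform along the sequence. Once this is in place, everything else is a mechanical transfer of the conclusions of Proposition \ref{LM5} through the identity $u_n=v_n+o_n(1)$; no new compactness argument or energy estimate is required.
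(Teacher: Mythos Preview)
Your proposal is correct, and since the paper gives no proof beyond the remark ``by the similar arguments as in Proposition~\ref{LM5}'', your argument is a legitimate way to fill in the details: transferring the Palais--Smale data from $\{u_n\}$ to the nearby radial sequence $\{v_n\}$ (using boundedness and local uniform continuity of $E_\mu$, $P_\mu$ and their differentials), applying Proposition~\ref{LM5} to $\{v_n\}\subset S_{a,r}$, and then pulling the conclusions back via $u_n=v_n+o_n(1)$. One minor point: you invoke Lemma~\ref{Lem5} for the boundedness of $\{u_n\}$, but that lemma is nominally stated for radial sequences; its proof, however, uses only $P_\mu(u_n)\to0$ and the level bound, so it applies verbatim to $\{u_n\}\subset S_a$, which you should remark explicitly.
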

\section{$L^{2}$-subcritical perturbation}\label{sec2}
For $N>2s$  and $2<q<2+4s/N$, let us recall $C'$ in \eqref{equ2}.
We consider the constrained functional $E_{\mu}|_{S_{a}}$. For every $u\in S_{a}$, by fractional Gagliardo-Nirenberg-Sobolev inequality \eqref{U3} and Sobolev inequality \eqref{int2}
\begin{align}\label{U16}
E_{\mu}(u)\geq\frac{1}{2}||u||^{2}_{D_{s}(\mathbb{R}^{N})}
-\frac{\mu}{q}C^{q}_{N,q,s}||u||^{q\gamma_{q,s}}_{D_{s}(\mathbb{R}^{N})}a^{q(1-\gamma_{p,s})}-\frac{1}{2_{s}^{\ast}}S^{-\frac{2_{s}^{\ast}}{2}}_{s}||u||^{2_{s}^{\ast}}_{D_{s}(\mathbb{R}^{N})}.
\end{align}
Therefore, we consider the function $h:\mathbb{R}^{+}\rightarrow \mathbb{R}$
\begin{align}\label{equ5}
h(t)=\frac{1}{2}t^{2}-\frac{\mu}{q}C^{q}_{N,q,s}a^{q(1-\gamma_{q,s})}t^{q\gamma_{q,s}}-\frac{1}{2_{s}^{\ast}}S^{-\frac{2_{s}^{\ast}}{2}}_{s}t^{2_{s}^{\ast}}.
\end{align}
Since $\mu>0$ and $q\gamma_{q,s}<2<2_{s}^{\ast}$, we have $h(0^{+})=0^{-}$ and $h(+\infty)=-\infty.$
\begin{lemma}\label{Lem2}
 Under the assumption that $\mu a^{(1-\gamma_{q,s})q}< C'$ (see \eqref{equ2}), the function $h$ has a local strict minimum at negative level, a global strict maximum at positive level, and no other critical points, and there exists a $R_{0}$ and $R_{1}$ both depending on $a$ and $\mu$, such that $h(R_{0})=0=h(R_{1})$ and $h(t)\geq0$ if and only if $t\in (R_{0}, R_{1}).$
\end{lemma}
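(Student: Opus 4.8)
The plan is to study the function $h(t)=\frac{1}{2}t^{2}-\frac{\mu}{q}C^{q}_{N,q,s}a^{q(1-\gamma_{q,s})}t^{q\gamma_{q,s}}-\frac{1}{2_{s}^{\ast}}S^{-2_{s}^{\ast}/2}_{s}t^{2_{s}^{\ast}}$ on $(0,\infty)$ by factoring out the lowest power $t^{q\gamma_{q,s}}$ and analysing the remaining function. Since $q\gamma_{q,s}<2<2_{s}^{\ast}$, write $h(t)=t^{q\gamma_{q,s}}\varphi(t)$ where
\begin{align*}
\varphi(t)=\frac{1}{2}t^{2-q\gamma_{q,s}}-\frac{\mu}{q}C^{q}_{N,q,s}a^{q(1-\gamma_{q,s})}-\frac{1}{2_{s}^{\ast}}S^{-2_{s}^{\ast}/2}_{s}t^{2_{s}^{\ast}-q\gamma_{q,s}}.
\end{align*}
Then $h$ and $\varphi$ have the same sign, and the zeros of $h$ on $(0,\infty)$ coincide with the zeros of $\varphi$. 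First I would observe that $\varphi(0^{+})=-\frac{\mu}{q}C^{q}_{N,q,s}a^{q(1-\gamma_{q,s})}<0$ and $\varphi(+\infty)=-\infty$, and that $\psi(t):=\frac{1}{2}t^{2-q\gamma_{q,s}}-\frac{1}{2_{s}^{\ast}}S^{-2_{s}^{\ast}/2}_{s}t^{2_{s}^{\ast}-q\gamma_{q,s}}$ (so $\varphi=\psi-\text{const}$) is strictly positive on a bounded interval away from $0$ and $\infty$: indeed $\psi(t)=t^{2-q\gamma_{q,s}}\big(\frac12-\frac{1}{2_{s}^{\ast}}S^{-2_{s}^{\ast}/2}_{s}t^{2_{s}^{\ast}-2}\big)$, which is positive iff $t<\big(\frac{2_{s}^{\ast}}{2}S^{2_{s}^{\ast}/2}_{s}\big)^{1/(2_{s}^{\ast}-2)}=:\bar t$.

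The heart of the argument is to show $\psi$ has a unique critical point $t^{*}\in(0,\infty)$, which is a global maximum. Differentiating, $\psi'(t)=\frac{2-q\gamma_{q,s}}{2}t^{1-q\gamma_{q,s}}-\frac{2_{s}^{\ast}-q\gamma_{q,s}}{2_{s}^{\ast}}S^{-2_{s}^{\ast}/2}_{s}t^{2_{s}^{\ast}-q\gamma_{q,s}-1}$, and setting $\psi'(t)=0$ gives the single positive root
\begin{align*}
t^{*}=\left(\frac{(2-q\gamma_{q,s})2_{s}^{\ast}}{2(2_{s}^{\ast}-q\gamma_{q,s})}S^{2_{s}^{\ast}/2}_{s}\right)^{1/(2_{s}^{\ast}-2)},
\end{align*}
since $1-q\gamma_{q,s}<2_{s}^{\ast}-q\gamma_{q,s}-1$ makes the two monomials cross exactly once, with $\psi'>0$ before $t^{*}$ and $\psi'<0$ after; hence $\psi$ increases then decreases, so $t^{*}$ is its unique (global) maximum. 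Plugging $t^{*}$ back in and simplifying yields $\psi(t^{*})=\frac{q(2_{s}^{\ast}-2)}{2\cdot 2_{s}^{\ast}(2_{s}^{\ast}-q\gamma_{q,s})}\big(t^{*}\big)^{2-q\gamma_{q,s}}$; the precise computation shows that the hypothesis $\mu a^{q(1-\gamma_{q,s})}<C'$ is exactly the condition $\frac{\mu}{q}C^{q}_{N,q,s}a^{q(1-\gamma_{q,s})}<\psi(t^{*})$, i.e. $\varphi(t^{*})=\psi(t^{*})-\frac{\mu}{q}C^{q}_{N,q,s}a^{q(1-\gamma_{q,s})}>0$.

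From this structural picture the conclusion follows readily. Since $\varphi$ is continuous, $\varphi(0^{+})<0$, $\varphi(t^{*})>0$, $\varphi(+\infty)=-\infty$, and $\varphi$ is strictly increasing on $(0,t^{*})$ and strictly decreasing on $(t^{*},\infty)$ (it inherits the monotonicity of $\psi$, being a constant shift of $\psi$), $\varphi$ has exactly two zeros $0<R_{0}<t^{*}<R_{1}<\infty$ with $\varphi>0$ precisely on $(R_{0},R_{1})$; translating back, $h(R_{0})=h(R_{1})=0$ and $h(t)\ge 0$ iff $t\in[R_{0},R_{1}]$. For the critical points of $h$ itself: on $(0,R_{0})$ we have $h<0$, $h(0^{+})=0$, $h(+\infty)=-\infty$, and $h'$ can be analysed similarly — writing $h'(t)=t^{q\gamma_{q,s}-1}\big(q\gamma_{q,s}\varphi(t)+t\varphi'(t)\big)$ and using that $g(t):=h'(t)t^{1-q\gamma_{q,s}}$ is a sum of three monomials $t^{q\gamma_{q,s}-1}$-normalised with exponents $1$, $q\gamma_{q,s}-1$, $2_{s}^{\ast}-1$ and signs $+,-,-$, one sees by the same "monomials cross at most a prescribed number of times'' / Descartes-type argument that $h'$ has exactly two positive zeros, the first being a local minimum (at negative level, since it lies in $(0,R_{0})$ where $h<0$) and the second a global maximum (at positive level, since it lies in $(R_{0},R_{1})$ where $h>0$), and no others. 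The main obstacle is the bookkeeping: carrying out the algebraic simplification that identifies $\psi(t^{*})$ with $C'/q\cdot C^{q}_{N,q,s}$ cleanly, and making the "two sign changes, hence exactly two zeros'' argument for $h'$ rigorous rather than merely plausible — this is where one must be careful that the relevant auxiliary function is genuinely first increasing then decreasing (or handle the three-term monomial derivative by an explicit convexity/second-derivative check) so that "at most two zeros'' combined with the boundary signs forces "exactly two''.
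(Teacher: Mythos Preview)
Your proposal is correct and follows essentially the same route as the paper: factor out $t^{q\gamma_{q,s}}$, reduce the sign analysis of $h$ to that of a two-monomial function $\psi$ shifted by a constant, locate its unique maximum $t^{*}$, and identify the hypothesis $\mu a^{q(1-\gamma_{q,s})}<C'$ with $\psi(t^{*})>\frac{\mu}{q}C^{q}_{N,q,s}a^{q(1-\gamma_{q,s})}$; the paper does exactly this (with your $\psi$ called $\varphi$ there).

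The one place where the paper is cleaner than your sketch is the critical-point count for $h$. Rather than going through $h'(t)=t^{q\gamma_{q,s}-1}\big(q\gamma_{q,s}\varphi(t)+t\varphi'(t)\big)$ and invoking a Descartes-type argument, the paper simply computes $h'(t)$ directly and factors it as
\[
h'(t)=t^{q\gamma_{q,s}-1}\Big[t^{2-q\gamma_{q,s}}-S_{s}^{-2_{s}^{\ast}/2}t^{2_{s}^{\ast}-q\gamma_{q,s}}-\mu\gamma_{q,s}C^{q}_{N,q,s}a^{q(1-\gamma_{q,s})}\Big],
\]
so that $h'(t)=0$ is again a ``two-monomial function equals a constant'' problem, structurally identical to the one you already solved for the zeros of $h$. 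The bracketed function has a unique strict maximum, hence at most two roots; since $h$ changes sign twice (from $0^{-}$ through positive back to $-\infty$), there are exactly two. This bypasses the bookkeeping you flagged as the main obstacle.
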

\begin{proof}
For $t>0$, we have $h(t)>0$ if and only if $$\varphi(t)>\frac{\mu}{q}C^{q}_{N,q,s}a^{q(1-\gamma_{q,s})},\ \text{with}\ \varphi(t)=\frac{1}{2}t^{2-q\gamma_{q,s}}-\frac{1}{2_{s}^{\ast}}S^{-\frac{2_{s}^{\ast}}{2}}_{s}t^{2_{s}^{\ast}-q\gamma_{q,s}} .$$ Since $$\varphi'(t)=\frac{2-q\gamma_{q,s}}{2}t^{1-q\gamma_{q,s}}-\frac{2_{s}^{\ast}-q\gamma_{q,s}}{2_{s}^{\ast}}S^{-\frac{2_{s}^{\ast}}{2}}_{s}t^{2_{s}^{\ast}-1-q\gamma_{q,s}},$$
it is easy to see that $\varphi(t)$ is increasing on $(0, \overline{t})$ and decreasing on $(\overline{t},+\infty)$ and has a unique global maximum point at positive level on $(0,+\infty)$, where $\overline{t}=\left(\frac{(2-q\gamma_{q,s})2_{s}^{\ast}}{2(2_{s}^{\ast}-q\gamma_{q,s})}S^{\frac{2_{s}^{\ast}}{2}}_{s}\right)^{\frac{1}{2_{s}^{\ast}-2}}$. Thus the maximum level is $$\varphi(\overline{t})=\frac{2_{s}^{\ast}-2}{2(2_{s}^{\ast}-q\gamma_{q,s})}\left(\frac{(2-q\gamma_{q,s})2_{s}^{\ast}}{2(2_{s}^{\ast}-q\gamma_{q,s})}S^{\frac{2_{s}^{\ast}}{2}}_{s}\right)^{\frac{2-q\gamma_{q,s}}{2_{s}^{\ast}-2}}.$$
Therefore, $h$ is positive on an open interval $(R_{0},R_{1})$ if and only if $\varphi(\overline{t})>\frac{\mu}{q}C^{q}_{N,q,s}a^{q(1-\gamma_{q,s})}$, which implies that $$\mu a^{q(1-\gamma_{q,s})} < \frac{q(2_{s}^{\ast}-2)}{2C^{q}_{N,q,s}(2_{s}^{\ast}-q\gamma_{q,s})}\left(\frac{(2-q\gamma_{q,s})2_{s}^{\ast}}{2(2_{s}^{\ast}-q\gamma_{q,s})}S^{\frac{2_{s}^{\ast}}{2}}_{s}\right)^{\frac{2-q\gamma_{q,s}}{2_{s}^{\ast}-2}}.$$
Since $h(0^{+})=0^{-}$ ,  $h(+\infty)=-\infty$  and $h$ is positive on an open interval $(R_{0},R_{1})$ , it is easy to see that $h$ has a global maximum at positive level in $(R_{0},R_{1})$ and has a local minimum point at negative level in $(0,R_{0})$. Since $h'(t)=t^{q\gamma_{q,s}-1}\left[t^{2-q\gamma_{q,s}}-\mu\gamma_{q,s}C^{q}_{N,q,s}a^{q(1-\gamma_{q,s})}-S^{-\frac{2_{s}^{\ast}}{2}}_{s}t^{2_{s}^{\ast}-q\gamma_{q,s}}\right]=0$ if and only if $$\psi(t)=\mu\gamma_{q,s}C^{q}_{N,q,s}a^{q(1-\gamma_{q,s})}\ \text{where}\ \psi(t)=t^{2-q\gamma_{q,s}}-S^{-\frac{2_{s}^{\ast}}{2}}_{s}t^{2_{s}^{\ast}-q\gamma_{q,s}}.$$ Obviously,
$\psi(t)$ has only one critical point, which is a strict maximum.
Therefore, if $\psi(t)_{max}\leq\mu\gamma_{q,s}C^{q}_{N,q,s}a^{q(1-\gamma_{q,s})},$
then it is easy to see that contract with $h$ is positive on an open interval $(R_{0},R_{1})$. Thus, $\psi(t)_{max}>\mu\gamma_{q,s}C^{q}_{N,q,s}a^{q(1-\gamma_{q,s})}$, which implies that $h$ only has a local strict minimum at negative level,
a global strict maximum at positive level, and no other critical points,
\end{proof}
\begin{lemma}\label{Lem3}
Under the condition of $\mu a^{(1-\gamma_{q,s})q}< C'$ (see \eqref{equ2}), then $\mathcal{P}^{0}_{a,\mu}=\emptyset$ and $\mathcal{P}_{a,\mu}$ is a smooth manifold of codimension 1 in $S_{a}$.
\end{lemma}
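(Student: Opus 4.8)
The plan is to show $\mathcal{P}^{0}_{a,\mu}=\emptyset$ first, and then to deduce the manifold property by an implicit function-theorem argument. For the first part, I would argue by contradiction: suppose $u\in\mathcal{P}^{0}_{a,\mu}$. Then both $P_{\mu}(u)=0$ and $(\Psi^{\mu}_{u})''(0)=0$ hold, i.e.
\begin{align*}
s\|u\|^{2}_{D_{s}(\mathbb{R}^{N})}&=\mu\gamma_{q,s}s\int_{\mathbb{R}^{N}}|u|^{q}dx+s\int_{\mathbb{R}^{N}}|u|^{2_{s}^{\ast}}dx,\\
2s^{2}\|u\|^{2}_{D_{s}(\mathbb{R}^{N})}&=\mu q\gamma^{2}_{q,s}s^{2}\int_{\mathbb{R}^{N}}|u|^{q}dx+2_{s}^{\ast}s^{2}\int_{\mathbb{R}^{N}}|u|^{2_{s}^{\ast}}dx.
\end{align*}
Treating $\|u\|^{2}_{D_{s}(\mathbb{R}^{N})}$, $\int|u|^{q}dx$ and $\int|u|^{2_{s}^{\ast}}dx$ as the three quantities linked by these two linear relations, I can eliminate one of the nonlinear terms to obtain two separate identities expressing $\|u\|^{2}_{D_{s}}$ as a multiple of $\int|u|^{q}dx$ and of $\int|u|^{2_{s}^{\ast}}dx$ respectively; concretely, eliminating the critical term yields $\|u\|^{2}_{D_{s}(\mathbb{R}^{N})}=\dfrac{\mu\gamma_{q,s}(2_{s}^{\ast}-q\gamma_{q,s})}{2_{s}^{\ast}-2}\int_{\mathbb{R}^{N}}|u|^{q}dx$, and eliminating the subcritical term yields $\|u\|^{2}_{D_{s}(\mathbb{R}^{N})}=\dfrac{2-q\gamma_{q,s}}{2_{s}^{\ast}-q\gamma_{q,s}}\int_{\mathbb{R}^{N}}|u|^{2_{s}^{\ast}}dx$ (the coefficients being positive precisely because $q\gamma_{q,s}<2<2_{s}^{\ast}$ in the $L^2$-subcritical regime).

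Next I would feed these two identities into the fractional Gagliardo–Nirenberg–Sobolev inequality \eqref{U3} and the Sobolev inequality \eqref{int2}. From the first identity together with $\int|u|^{q}dx\le C^{q}_{N,q,s}\|u\|^{q\gamma_{q,s}}_{D_{s}}a^{q(1-\gamma_{q,s})}$, I obtain a lower bound of the form $\|u\|^{2-q\gamma_{q,s}}_{D_{s}(\mathbb{R}^{N})}\le c_1\,\mu\, a^{q(1-\gamma_{q,s})}C^{q}_{N,q,s}$, i.e. an \emph{upper} bound on $\|u\|_{D_{s}}$; from the second identity together with $\int|u|^{2_{s}^{\ast}}dx\le S_{s}^{-2_{s}^{\ast}/2}\|u\|^{2_{s}^{\ast}}_{D_{s}}$, I obtain a \emph{lower} bound $\|u\|^{2_{s}^{\ast}-2}_{D_{s}(\mathbb{R}^{N})}\ge c_2\,S_{s}^{2_{s}^{\ast}/2}$ with $c_2=\dfrac{2-q\gamma_{q,s}}{2_{s}^{\ast}-q\gamma_{q,s}}$. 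Combining the upper and lower bounds on $\|u\|_{D_{s}(\mathbb{R}^{N})}$ forces an inequality of the shape $\mu a^{q(1-\gamma_{q,s})}\ge (\text{explicit constant})$, and the bookkeeping of the constants must reproduce exactly the threshold $C'$ of \eqref{equ2}; this contradicts the hypothesis $\mu a^{q(1-\gamma_{q,s})}<C'$. Hence $\mathcal{P}^{0}_{a,\mu}=\emptyset$. (Alternatively, one can phrase this via Lemma \ref{Lem2}: an element of $\mathcal{P}^{0}_{a,\mu}$ would, after evaluating at $t=0$ in the fiber map, correspond to a degenerate critical point of $\Psi^{\mu}_{u}$, while \eqref{U16} and Lemma \ref{Lem2} show every fiber map has only nondegenerate critical points under the standing assumption.)

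For the manifold statement, I would note that $\mathcal{P}_{a,\mu}=\{u\in S_a : P_\mu(u)=0\}$ where $S_a$ is itself a smooth submanifold of $H^s(\mathbb{R}^N)$ (zero set of $u\mapsto \|u\|_{L^2}^2-a^2$, which has nonvanishing differential on $S_a$). It suffices to show that $0$ is a regular value of $P_\mu$ restricted to $S_a$, i.e. that $dP_\mu|_{T_uS_a}\neq 0$ for every $u\in\mathcal{P}_{a,\mu}$. If this failed at some $u$, then $dP_\mu(u)$ would be a Lagrange multiple of $d(\|\cdot\|_{L^2}^2)(u)$, and testing the resulting identity with the direction $u$ itself (equivalently, computing $\frac{d}{dt}\big|_{t=0}P_\mu(t\star u)$, which is $(\Psi^\mu_u)''(0)$ up to the $L^2$-invariance of the scaling — note $\|t\star u\|_{L^2}=\|u\|_{L^2}$, so the multiplier term drops out) would give $(\Psi^\mu_u)''(0)=0$, that is, $u\in\mathcal{P}^0_{a,\mu}$, contradicting the first part. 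Therefore $\mathcal{P}_{a,\mu}$ is a smooth manifold of codimension $1$ in $S_a$.

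The main obstacle is the first part: one must carry out the constant-chasing carefully enough to land on precisely the constant $C'$ appearing in \eqref{equ2}, rather than merely "some constant." Getting the two eliminations and the two applications of the inequalities to interlock with the correct exponents $2-q\gamma_{q,s}$, $2_{s}^{\ast}-q\gamma_{q,s}$, $2_{s}^{\ast}-2$ is the delicate bookkeeping; the regular-value/implicit-function argument for the manifold structure is then routine once $\mathcal{P}^0_{a,\mu}=\emptyset$ is in hand.
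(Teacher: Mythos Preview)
Your overall approach matches the paper's: argue by contradiction from the two relations $P_\mu(u)=0$ and $(\Psi^\mu_u)''(0)=0$, extract upper and lower bounds on $\|u\|_{D_s}$ via Gagliardo--Nirenberg and Sobolev, and combine. There is, however, a genuine gap in what you expect the outcome of the constant-chase to be.

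When you combine the two bounds, you do \emph{not} land on $C'$ exactly. What actually drops out is
\[
\mu a^{q(1-\gamma_{q,s})}\ \geq\ \frac{2_{s}^{\ast}-2}{\gamma_{q,s}C^{q}_{N,q,s}(2_{s}^{\ast}-q\gamma_{q,s})}\left[\frac{2-q\gamma_{q,s}}{2_{s}^{\ast}-q\gamma_{q,s}}S_{s}^{2_{s}^{\ast}/2}\right]^{\frac{2-q\gamma_{q,s}}{2_{s}^{\ast}-2}},
\]
which is a \emph{different} constant from the $C'$ of \eqref{equ2}. A separate step is then needed: one must show this constant is $\geq C'$, which after cancellation reduces to the elementary inequality
\[
\left(\frac{q\gamma_{q,s}}{2}\right)^{2_{s}^{\ast}-2}\left(\frac{2_{s}^{\ast}}{2}\right)^{2-q\gamma_{q,s}}\leq 1\qquad(2<q<\bar p).
\]
The paper proves this by setting $x=q\gamma_{q,s}\in(0,2)$, defining $f(x)=(x/2)^{2_{s}^{\ast}-2}(2_{s}^{\ast}/2)^{2-x}$, and checking $f$ is increasing on $(0,2]$ with $f(2)=1$. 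Without this extra inequality, the contradiction with the hypothesis $\mu a^{q(1-\gamma_{q,s})}<C'$ does not close. Your proposal flags the bookkeeping as ``the main obstacle'' but mis-anticipates its nature: it is not merely careful arithmetic that reproduces $C'$, but an additional analytic inequality between two distinct constants.

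Two minor remarks. First, your second identity has the coefficient inverted: eliminating the subcritical term gives $\|u\|^{2}_{D_s}=\dfrac{2_{s}^{\ast}-q\gamma_{q,s}}{2-q\gamma_{q,s}}\int|u|^{2_{s}^{\ast}}$, not the reciprocal (your stated $c_2$ in the ensuing lower bound is nonetheless correct, so this is a harmless slip). Second, your parenthetical alternative via Lemma~\ref{Lem2} is circular in the paper's logical order: the nondegeneracy of the critical points of $\Psi^\mu_u$ is established in Lemma~\ref{Lem4}, whose proof \emph{uses} $\mathcal{P}^0_{a,\mu}=\emptyset$.

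For the manifold part, your argument is essentially the paper's, with one cosmetic difference: where you test the Lagrange-multiplier identity against the scaling direction to obtain $(\Psi^\mu_u)''(0)=0$, the paper writes the resulting Euler--Lagrange equation $2s(-\Delta)^{s}u=\nu u+\mu q\gamma_{q,s}s|u|^{q-2}u+2_{s}^{\ast}s|u|^{2_{s}^{\ast}-2}u$ and applies the Pohozaev identity to reach the same conclusion. Both routes are valid.
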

\begin{proof}
Assume by contradiction that there exists a $u\in \mathcal{P}^{0}_{a,\mu}$ such that
\begin{align}\label{U11}
||u||^{2}_{D_{s}(\mathbb{R}^{N})}-\mu\gamma_{q,s}\int_{\mathbb{R}^{N}} |u|^{q}dx-\int_{\mathbb{R}^{N}} |u|^{2_{s}^{\ast}}dx=0,
\end{align}
and
\begin{align}\label{U12}
2||u||^{2}_{D_{s}(\mathbb{R}^{N})} =\mu q\gamma^{2}_{q,s}\int_{\mathbb{R}^{N}} |u|^{q}dx+2_{s}^{\ast}\int_{\mathbb{R}^{N}} |u|^{2_{s}^{\ast}}dx.
\end{align}
Thus, from \eqref{int2}, \eqref{U3}, \eqref{U11}, and \eqref{U12}, we have $$\mu\gamma_{q,s}(2-q\gamma_{q,s})\int_{\mathbb{R}^{N}} |u|^{q}dx=(2_{s}^{\ast}-2)\int_{\mathbb{R}^{N}} |u|^{2_{s}^{\ast}}dx,$$
\begin{align}\label{U13}
||u||^{2}_{D_{s}(\mathbb{R}^{N})} =\frac{2_{s}^{\ast}-q\gamma_{q,s}}{2-q\gamma_{q,s}}\int_{\mathbb{R}^{N}} |u|^{2_{s}^{\ast}}dx\leq \frac{2_{s}^{\ast}-q\gamma_{q,s}}{2-q\gamma_{q,s}} S^{-\frac{2_{s}^{\ast}}{2}}_{s}||u||^{2_{s}^{\ast}}_{D_{s}(\mathbb{R}^{N})},
\end{align}
\begin{align}\label{U14}
||u||^{2}_{D_{s}(\mathbb{R}^{N})} =\mu\gamma_{q,s}\frac{2_{s}^{\ast}-q\gamma_{q,s}}{2_{s}^{\ast}-2}\int_{\mathbb{R}^{N}} |u|^{q}dx
\leq \mu\gamma_{q,s}\frac{2_{s}^{\ast}-q\gamma_{q,s}}{2_{s}^{\ast}-2} C^{q}_{N,q,s}a^{q(1-\gamma_{q,s})}||u||^{q\gamma_{q,s}}_{D_{s}(\mathbb{R}^{N})},
\end{align}
From \eqref{U13} and \eqref{U14}, we can infer that $$\left[\frac{2-q\gamma_{q,s}}{2_{s}^{\ast}-q\gamma_{q,s}} S^{\frac{2_{s}^{\ast}}{2}}_{s}\right]^{\frac{1}{2_{s}^{\ast}-2}}\leq \left[\mu\gamma_{q,s}\frac{2_{s}^{\ast}-q\gamma_{q,s}}{2_{s}^{\ast}-2} C^{q}_{N,q,s}a^{q(1-\gamma_{q,s})}\right]^{\frac{1}{2-q\gamma_{q,s}}},$$
which implies that
\begin{align}\label{U15}
\mu a^{q(1-\gamma_{q,s})}\geq \frac{2_{s}^{\ast}-2}{\gamma_{q,s}C^{q}_{N,q,s}(2_{s}^{\ast}-q\gamma_{q,s})} \left[\frac{2-q\gamma_{q,s}}{2_{s}^{\ast}-q\gamma_{q,s}} S^{\frac{2_{s}^{\ast}}{2}}_{s}\right]^{\frac{2-q\gamma_{q,s}}{2_{s}^{\ast}-2}}.
\end{align}
Next, we show that the right hand of \eqref{U15} is greater than or equal to $C'$. To show $$\frac{2_{s}^{\ast}-2}{\gamma_{q,s}C^{q}_{N,q,s}(2_{s}^{\ast}-q\gamma_{q,s})} \left[\frac{2-q\gamma_{q,s}}{2_{s}^{\ast}-q\gamma_{q,s}} S^{\frac{2_{s}^{\ast}}{2}}_{s}\right]^{\frac{2-q\gamma_{q,s}}{2_{s}^{\ast}-2}}\geq \frac{q(2_{s}^{\ast}-2)}{2C^{q}_{N,q,s}(2_{s}^{\ast}-q\gamma_{q,s})}\left(\frac{(2-q\gamma_{q,s})2_{s}^{\ast}}{2(2_{s}^{\ast}-q\gamma_{q,s})}S^{\frac{2_{s}^{\ast}}{2}}_{s}\right)^{\frac{2-q\gamma_{q,s}}{2_{s}^{\ast}-2}},$$
we only need to prove that $$\left(\frac{q\gamma_{q,s}}{2}\right)^{2_{s}^{\ast}-2}\left(\frac{2_{s}^{\ast}}{2}\right)^{2-q\gamma_{q,s}}\leq1, \ \text{for every} \ \ 2<q<\overline{p}<2_{s}^{\ast}.$$
Let $q\gamma_{q,s}=x\in (0,2)$, we need to show that $\left(\frac{x}{2}\right)^{2_{s}^{\ast}-2}\left(\frac{2_{s}^{\ast}}{2}\right)^{2-x}\leq1.$ For this, we set $f(x)=\left(\frac{x}{2}\right)^{2_{s}^{\ast}-2}\left(\frac{2_{s}^{\ast}}{2}\right)^{2-x}$, it is easy to see that $f(x)$ is strictly increasing on $(0, \frac{2_{s}^{\ast}-2}{\ln2_{s}^{\ast}-\ln 2})$ and decreasing on $( \frac{2_{s}^{\ast}-2}{\ln2_{s}^{\ast}-\ln 2},+\infty)$. Thus, when $x\in (0,2)$ $f(x)\leq f(2)=1,$ which implies that $$\left(\frac{q\gamma_{q,s}}{2}\right)^{2_{s}^{\ast}-2}\left(\frac{2_{s}^{\ast}}{2}\right)^{2-q\gamma_{q,s}}\leq1.$$ This contradicts the assumption $\mu a^{q(1-\gamma_{q,s})}<C'.$  Thus, $\mathcal{P}^{0}_{a,\mu}=\emptyset.$

Next, we show that $\mathcal{P}_{a,\mu}$ is a smooth manifold of codimension 1 on $S_{a}$. Since $\mathcal{P}_{a,\mu}=\bigg\{u\in S_{a}:||u||^{2}_{D_{s}(\mathbb{R}^{N})}=\mu\gamma_{q,s}\int_{\mathbb{R}^{N}} |u|^{q}dx+\int_{\mathbb{R}^{N}} |u|^{2_{s}^{\ast}}dx\bigg\}$, we know that $\mathcal{P}_{a,\mu}$ is defined by $P_{\mu}(u)=0$, $G(u)=0$, where $$P_{\mu}(u)=s||u||^{2}_{D_{s}(\mathbb{R}^{N})}-\mu\gamma_{q,s}s\int_{\mathbb{R}^{N}} |u|^{q}dx-s\int_{\mathbb{R}^{N}} |u|^{2_{s}^{\ast}}dx \ \ \text{and}\ \ G(u)=\int_{\mathbb{R}^{N}} |u|^{2}dx=a^{2} .$$ Since $P_{\mu}(u)$ and $G(u)$ are class of $C^{1}$, we only need to check that $d(P_{\mu}(u),G(u))$: $H^{s}(\mathbb{R}^{N})\rightarrow \mathbb{R}^{2}$ is surjective. If this not true, $dP_{\mu}(u)$ has to be linearly dependent from $dG(u)$ i.e. there exist a $\nu\in \mathbb{R}$ such that $$2s\int_{\mathbb{R}^{N}} (-\Delta)^{\frac{s}{2}}u(-\Delta)^{\frac{s}{2}}\varphi dx-\mu q\gamma_{q,s}s\int_{\mathbb{R}^{N}} |u|^{q-2}u\varphi dx-s2_{s}^{\ast}\int_{\mathbb{R}^{N}} |u|^{2_{s}^{\ast}-2}u\varphi dx=\nu\int_{\mathbb{R}^{N}} u\varphi dx$$ for every $\varphi\in H^{s}(\mathbb{R}^{N})$, which implies that $$2s(-\Delta)^{2}u=\nu u+\mu q\gamma_{q,s}su^{q-1}+2_{s}^{\ast}s u^{2_{s}^{\ast}-1}\ \ \text{in}\ \mathbb{R}^{N}.$$
By the Pohozaev identity for above equation, we know that $$2s^{2}||u||^{2}_{D_{s}(\mathbb{R}^{N})}= \mu q\gamma^{2}_{q,s}s^{2}\int_{\mathbb{R}^{N}} |u|^{q}dx+2_{s}^{\ast}s^{2}\int_{\mathbb{R}^{N}} |u|^{2_{s}^{\ast}}dx,$$
that is $u\in \mathcal{P}^{0}_{a,\mu}$, a contradiction. Hence, $\mathcal{P}_{a,\mu}$ is a natural constraint.
\end{proof}
\begin{lemma}\label{Lem4}
For every $u\in S_{a}$, the function $\Psi^{\mu}_{u}(t)$ has exactly two critical points $a_{u}<t_{u}\in \mathbb{R}$ and two zeros $c_{u}<d_{u}\in \mathbb{R}$, with $a_{u}<c_{u}<t_{u}<d_{u}$. Moreover,
\begin{enumerate}
\item[$(1)$] $a_{u}\star u\in \mathcal{P}^{+}_{a,\mu}$ and $t_{u}\star u\in \mathcal{P}^{-}_{a,\mu}$, and if $t\star u\in \mathcal{P}_{a,\mu}$, then either $t=a_{u}$ or $t=t_{u}.$

\item[$(2)$] $||t\star u||_{D_{s}(\mathbb{R}^{N})}\leq R_{0}$ for every $t\leq c_{u},$ and $$E_{\mu}(u)(a_{u}\star u)=\min\{E_{\mu}(t\star u):t\in \mathbb{R}\ \text{and}\ ||t\star u||_{D_{s}(\mathbb{R}^{N})}< R_{0} \}<0.$$

\item[$(3)$] We have $$E_{\mu}(u)(t_{u}\star u)=\max\{E_{\mu}(t\star u):t\in \mathbb{R} \}>0$$ and $\Psi^{\mu}_{u}(t)$ is strictly decreasing and concave on $(t_{u},+\infty)$. In particular, if $t_{u}<0$, then $P_{\mu}(u)<0.$

\item[$(4)$] The maps $u\in S_{a}: a_{u} \in \mathbb{R} $ and $u\in S_{a}: t_{u} \in \mathbb{R} $ are of class $C^{1}$.
\end{enumerate}
\end{lemma}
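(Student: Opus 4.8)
The plan is to study the fiber map $\Psi^{\mu}_u(t)$ directly by analyzing its derivative. Recall from \eqref{pp} that, writing $A=\|u\|^2_{D_s(\mathbb{R}^N)}$, $B=\int_{\mathbb{R}^N}|u|^q\,dx$, $C=\int_{\mathbb{R}^N}|u|^{2_s^*}\,dx$ (all strictly positive since $u\neq 0$), we have
\begin{equation*}
\Psi^{\mu}_u(t)=\frac{e^{2st}}{2}A-\frac{\mu e^{q\gamma_{q,s}st}}{q}B-\frac{e^{2_s^*st}}{2_s^*}C,
\end{equation*}
so $(\Psi^{\mu}_u)'(t)=P_\mu(t\star u)=se^{2st}A-\mu\gamma_{q,s}se^{q\gamma_{q,s}st}B-se^{2_s^*st}C$. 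The first idea is to relate $\Psi^{\mu}_u$ to the function $h$ of \eqref{equ5}: since $\|t\star u\|_{D_s(\mathbb{R}^N)}=e^{st}A^{1/2}$, the estimates \eqref{U16} together with the scaling law show that $\Psi^{\mu}_u(t)\geq h\big(e^{st}A^{1/2}\big)$ and, more usefully, that the sign behaviour and the number of critical points of $\Psi^{\mu}_u$ are governed by the same one-variable analysis as in Lemma \ref{Lem2}. Concretely, $(\Psi^{\mu}_u)'(t)=0$ iff $e^{st(2-q\gamma_{q,s})}A-\mu\gamma_{q,s}B=e^{st(2_s^*-q\gamma_{q,s})}C\cdot e^{-st(2-q\gamma_{q,s})}\cdot e^{st(2-q\gamma_{q,s})}$; cleaner, after dividing by $se^{q\gamma_{q,s}st}$, setting $\tau=e^{st}>0$, we get $g(\tau):=\tau^{2-q\gamma_{q,s}}A-\tau^{2_s^*-q\gamma_{q,s}}C=\mu\gamma_{q,s}B$. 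Since $2-q\gamma_{q,s}>0$ (as $q<\overline p$) and $2_s^*-q\gamma_{q,s}>2-q\gamma_{q,s}>0$, the function $g$ vanishes at $\tau=0$ and $\tau=+\infty$, is positive on a bounded interval, and has a unique interior critical point which is a strict maximum. So the equation $g(\tau)=\mu\gamma_{q,s}B$ has either zero, one, or two positive solutions; I would show it has exactly two by checking that $\max g>\mu\gamma_{q,s}B$ under the hypothesis $\mu a^{q(1-\gamma_{q,s})}<C'$, which is essentially the content of Lemma \ref{Lem3} (where the borderline case $\mathcal{P}^0_{a,\mu}=\emptyset$ is ruled out precisely by this inequality, after using the Sobolev and Gagliardo–Nirenberg bounds on $B$ and $C$). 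This yields the two critical points $a_u<t_u$ of $\Psi^{\mu}_u$, with $(\Psi^{\mu}_u)'<0$ on $(-\infty,a_u)$, $>0$ on $(a_u,t_u)$, $<0$ on $(t_u,+\infty)$; hence $a_u$ is a strict local minimum and $t_u$ a strict global maximum. Evaluating $(\Psi^{\mu}_u)''$ at these points (or just reading off the sign change of the derivative) gives $a_u\star u\in\mathcal{P}^+_{a,\mu}$, $t_u\star u\in\mathcal{P}^-_{a,\mu}$, and the last sentence of (1) is immediate since all critical points of $\Psi^{\mu}_u$ lie in $\{a_u,t_u\}$.

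For the locations of the zeros and (2): since $\Psi^{\mu}_u(-\infty)=0^-$ (the $e^{2st}$ term dominates near $-\infty$ and is positive, but one checks $\Psi^{\mu}_u\to 0^-$... actually $\Psi^{\mu}_u(t)\sim \tfrac12 e^{2st}A\to 0^+$ — here I would instead note $h(0^+)=0^-$ only because of the $q\gamma_{q,s}<2$ term, so I would argue via $h$: $\Psi^{\mu}_u(t)=h(e^{st}A^{1/2})$ is false in general, so more carefully: near $t\to-\infty$ the dominant term among the three exponentials is the one with the smallest exponent, namely $q\gamma_{q,s}s$, so $\Psi^{\mu}_u(t)\sim -\tfrac{\mu}{q}e^{q\gamma_{q,s}st}B\to 0^-$). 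Thus $\Psi^{\mu}_u$ starts at $0^-$, decreases to $\Psi^{\mu}_u(a_u)<0$, increases through a zero $c_u$ to the positive maximum $\Psi^{\mu}_u(t_u)>0$, then decreases to $-\infty$ crossing zero once more at $d_u$; the strict monotonicity on each interval gives uniqueness of $c_u,d_u$ and the ordering $a_u<c_u<t_u<d_u$. Since $\|t\star u\|_{D_s(\mathbb{R}^N)}=e^{st}A^{1/2}$ is increasing in $t$ and $h\leq 0$ exactly off $(R_0,R_1)$ (Lemma \ref{Lem2}), and since $\Psi^{\mu}_u(t)\geq h(e^{st}A^{1/2})$ forces any $t$ with $\Psi^{\mu}_u(t)>0$ to satisfy $R_0<e^{st}A^{1/2}<R_1$, I get $e^{c_u s}A^{1/2}\geq R_0$, and for $t\leq c_u$ I need the reverse; here I would use that on $(-\infty,c_u]$ one has $\Psi^\mu_u\le 0$ and combine with the fact that $h(e^{st}A^{1/2})>0$ would be needed for $e^{st}A^{1/2}>R_0$ — a cleaner route is to note $c_u$ is characterized as the unique zero in the increasing branch and $e^{c_us}A^{1/2}\le R_1$; I will instead directly show $\|t\star u\|_{D_s}\le R_0$ for $t\le c_u$ by a continuity/monotonicity argument pinning $e^{c_u s}A^{1/2}$ between $R_0$ and the maximum point. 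Then $a_u\star u$, lying in the region $\|\cdot\|_{D_s}<R_0$, realizes the minimum of $E_\mu$ over that region because $\Psi^\mu_u$ is decreasing on $(-\infty,a_u)$ and increasing on $(a_u,c_u)$ while on $\{\,t:\|t\star u\|_{D_s}<R_0,\ t>c_u\,\}=\emptyset$; and $\Psi^\mu_u(a_u)<0$ from the above, giving (2).

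Item (3): $\Psi^{\mu}_u(t_u\star u)=\max_{t\in\mathbb{R}}E_\mu(t\star u)>0$ is immediate since $t_u$ is the global max of $\Psi^{\mu}_u$ and $\Psi^{\mu}_u(t_u)\geq\Psi^{\mu}_u(c_u)=0$ with strict inequality. For strict decrease and concavity on $(t_u,+\infty)$: $(\Psi^{\mu}_u)'<0$ there by the derivative analysis; for concavity, $(\Psi^{\mu}_u)''(t)=2s^2e^{2st}A-\mu q\gamma_{q,s}^2s^2e^{q\gamma_{q,s}st}B-2_s^*s^2e^{2_s^*st}C$, and on $(t_u,+\infty)$ — where $e^{st}$ is large — the term $-2_s^*s^2e^{2_s^*st}C$ with the largest exponent dominates; more precisely, combining $(\Psi^{\mu}_u)'(t)<0$, i.e. $e^{2st}A<\mu\gamma_{q,s}e^{q\gamma_{q,s}st}B+e^{2_s^*st}C$, with the elementary inequalities $2<2_s^*$ and $q\gamma_{q,s}^2<2\gamma_{q,s}$ (since $q\gamma_{q,s}<2$), one gets $(\Psi^{\mu}_u)''(t)<2s^2(\mu\gamma_{q,s}e^{q\gamma_{q,s}st}B+e^{2_s^*st}C)-\mu q\gamma_{q,s}^2s^2e^{q\gamma_{q,s}st}B-2_s^*s^2e^{2_s^*st}C=\mu s^2\gamma_{q,s}(2-q\gamma_{q,s})e^{q\gamma_{q,s}st}B-(2_s^*-2)s^2e^{2_s^*st}C$, which is negative once $e^{st}$ is large enough; I would verify that "large enough" is guaranteed for $t>t_u$ using the characterization of $t_u$ (at $t=t_u$, $P_\mu(t_u\star u)=0$ already forces $e^{2_s^*st_u}C>$ a positive multiple of $e^{q\gamma_{q,s}st_u}B$). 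The final claim in (3) is a consequence of monotonicity of $t\mapsto P_\mu(t\star u)=(\Psi^\mu_u)'(t)$ near $t_u$: if $t_u<0$ then $0\in(t_u,+\infty)$ where $(\Psi^\mu_u)'<0$, so $P_\mu(u)=(\Psi^\mu_u)'(0)<0$. For (4), apply the implicit function theorem to $(t,u)\mapsto(\Psi^\mu_u)'(t)$ at $(a_u,u)$ and $(t_u,u)$: the $t$-derivative is $(\Psi^\mu_u)''(a_u)>0$ and $(\Psi^\mu_u)''(t_u)<0$ respectively, both nonzero since $\mathcal{P}^0_{a,\mu}=\emptyset$ (Lemma \ref{Lem3}), and the map is $C^1$ jointly; hence $u\mapsto a_u$ and $u\mapsto t_u$ are $C^1$ on $S_a$.

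The main obstacle, and the step requiring the most care, is pinning down the exact ordering $a_u<c_u<t_u<d_u$ together with the characterization in (2) that $\|t\star u\|_{D_s}\le R_0$ precisely for $t\le c_u$: this requires dovetailing the one-variable analysis of $\Psi^\mu_u$ (which has three exponential terms) with the analysis of $h$ (which has the structure of \eqref{equ5}), and checking that the threshold radius $R_0$ from Lemma \ref{Lem2} is consistent with the first zero $c_u$ of the fiber map. The concavity claim on $(t_u,+\infty)$ in (3) is the other delicate point, since $(\Psi^\mu_u)''$ is not manifestly sign-definite and one must exploit the constraint $(\Psi^\mu_u)'<0$ quantitatively.
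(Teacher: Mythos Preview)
Your overall strategy---analyze $\Psi^{\mu}_u$ via its derivative, reduce $(\Psi^{\mu}_u)'(t)=0$ to a one-variable equation with at most two roots, then use the implicit function theorem for (4)---matches the paper's. But there is one recurring error that affects both the existence of the two critical points and item (2): you use the inequality $\Psi^{\mu}_u(t)\geq h\big(e^{st}A^{1/2}\big)$ in the wrong direction.

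From $\Psi^{\mu}_u\geq h$ you cannot conclude that $\Psi^{\mu}_u(t)>0$ forces $e^{st}A^{1/2}\in(R_0,R_1)$; the implication goes the other way. What the inequality \emph{does} give (and what the paper uses) is that the interval $\{t:R_0<e^{st}A^{1/2}<R_1\}$, on which $h>0$ by Lemma~\ref{Lem2}, is contained in $\{t:\Psi^{\mu}_u(t)>0\}$. Combined with $\Psi^{\mu}_u(-\infty)=0^-$ and $\Psi^{\mu}_u(+\infty)=-\infty$, this forces a local minimum at negative level in the region $e^{st}A^{1/2}<R_0$ and a global maximum at positive level in the region $R_0<e^{st}A^{1/2}<R_1$. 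So existence of two critical points comes for free from the comparison with $h$, not from checking $\max g>\mu\gamma_{q,s}B$ directly; and Lemma~\ref{Lem3} only tells you $\max g\neq\mu\gamma_{q,s}B$, not the strict inequality you claim. Once you read the inclusion correctly, item (2) is immediate: since $(c_u,d_u)=\{\Psi^{\mu}_u>0\}\supset\{t:e^{st}A^{1/2}\in(R_0,R_1)\}$, the left endpoints satisfy $e^{sc_u}A^{1/2}\leq R_0$ (you wrote $\geq$), hence $\|t\star u\|_{D_s}\leq R_0$ for all $t\leq c_u$, and the local minimizer $a_u<c_u$ lies in $A_{R_0}$.

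For the concavity in (3), your quantitative attempt via $(\Psi^{\mu}_u)'<0$ is unnecessarily delicate and, as you note, incomplete. The paper's route is cleaner: $(\Psi^{\mu}_u)''(t)=0$ has the same algebraic structure as $(\Psi^{\mu}_u)'(t)=0$ (a unimodal function equal to a constant), hence at most two inflection points; since $(\Psi^{\mu}_u)''$ changes sign on $(-\infty,a_u)$ and on $(a_u,t_u)$ (from the signs at $a_u$, $t_u$, and at $\pm\infty$), both inflection points lie to the left of $t_u$, and $(\Psi^{\mu}_u)''<0$ on $[t_u,+\infty)$.
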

\begin{proof}
Let $u\in S_{a}$, since $t\star u\in \mathcal{P}_{a,\mu} $  if and only if $(\Psi^{\mu}_{u})'(t)=0$. Thus, we first show that $\Psi^{\mu}_{u}(t)$ has at least two critical points. From \eqref{U16}, we have $$\Psi^{\mu}_{u}(t)=E_{\mu}(t\star u)\geq h(||t\star u||_{D_{s}(\mathbb{R}^{N})})=h(e^{st}|| u||_{D_{s}(\mathbb{R}^{N})}).$$ Thus, the $C^{2}$ function $\Psi^{\mu}_{u}(t)$ is positive on $\left(\frac{\ln\left(\frac{R_{0}}{|| u||_{D_{s}(\mathbb{R}^{N})}}\right)}{s}, \frac{\ln\left(\frac{R_{1}}{|| u||_{D_{s}(\mathbb{R}^{N})}}\right)}{s}\right)$ and $\Psi^{\mu}_{u}(-\infty)=0^{-}$, $\Psi^{\mu}_{u}(+\infty)=-\infty$, thus it is easy to see that $\Psi^{\mu}_{u}(t)$ has a local minimum point $a_{u}$ at negative level in $(0,\frac{\ln\left(\frac{R_{0}}{|| u||_{D_{s}(\mathbb{R}^{N})}}\right)}{s})$ and has a global maximum point $t_{u}$  at positive level in $\left(\frac{\ln\left(\frac{R_{0}}{|| u||_{D_{s}(\mathbb{R}^{N})}}\right)}{s}, \frac{\ln\left(\frac{R_{1}}{|| u||_{D_{s}(\mathbb{R}^{N})}}\right)}{s}\right).$ Next, we show that $\Psi^{\mu}_{u}(t)$ has no other critical points. Indeed, $(\Psi^{\mu}_{u})'(t)=0$ implies that $$\Psi(t)=\mu \gamma_{q,s}s\int_{\mathbb{R}^{N}} |u|^{q}dx\ \text{where}\ \Psi(t)=se^{(2-q\gamma_{q,s})st}\|u\|^{2}_{D_{s}(\mathbb{R}^{N})}-se^{(2_{s}^{\ast}-q\gamma_{q,s})st}\int_{\mathbb{R}^{N}} |u|^{2_{s}^{\ast}}dx.$$ It is easy to see that $\Psi(t)$ has a unique maximum point, thus the above equation has at most two solutions. From $u\in S_{a},$ $t\in\mathbb{R}$ is a critical point of $\Psi^{\mu}_{u}(t)$ if and only if $t\star u\in \mathcal{P}_{a,\mu} ,$ we have $a_{u}\star u,\ \  t_{u}\star u\in \mathcal{P}_{a,\mu} $ and $t\star u \in \mathcal{P}_{a,\mu}$ if and only if $t=a_{u}$ or $t=t_{u}$. Since $a_{u}$ is a local minimum point of $\Psi^{\mu}_{u}(t)$, we know that $(\Psi^{\mu}_{a_{u}\star u})''(0)=(\Psi^{\mu}_{u})''(a_{u})\geq0$, since $\mathcal{P}^{0}_{a,\mu}=\emptyset$, we know that $(\Psi^{\mu}_{u})''(a_{u})\neq0$, thus $(\Psi^{\mu}_{a_{u}\star u})''(0)=(\Psi^{\mu}_{u})''(a_{u})>0$, which implies that $a_{u}\star u\in \mathcal{P}^{+}_{a,\mu},$ similarly, we have $t_{u}\star u\in \mathcal{P}^{-}_{a,\mu}.$ By the monotonicity and the behavior at infinity of $\Psi^{\mu}_{u}$, we know that $\Psi^{\mu}_{u}$ has exactly two zeros $c_{u}<d_{u}$ with $a_{u}<c_{u}<t_{u}<d_{u}$ and $\Psi^{\mu}_{u}$ has exactly two inflection points, in particular, $\Psi^{\mu}_{u}$ is concave on $[t_{u},+\infty)$ and hence if $t_{u}<0$, then $P_{\mu}(u)=(\Psi^{\mu}_{u})'(0)<0.$ Finally, we prove that $u\in S_{a}: a_{u} \in \mathbb{R} $ and $u\in S_{a}: t_{u} \in \mathbb{R} $ are of class $C^{1}$. Indeed, we can apply the implicit function theorem on the $C^{1}$ function $\Phi(t,u)=(\Psi^{\mu}_{u})'(t)$, then $\Phi(a_{u},u)=(\Psi^{\mu}_{u})'(a_{u})=0,  \partial_{s}\Phi(a_{u},u)=(\Psi^{\mu}_{u})''(a_{u})<0$, by the implicit function theorem, we know that $u\in S_{a}: a_{u} \in \mathbb{R} $  is class of $C^{1}$, similarly, we can prove that $u\in S_{a}: t_{u} \in \mathbb{R} $  is class of $C^{1}$.
\end{proof}
For $k>0$,  set $$A_{k}=\{u\in S_{a}:\|u\|^{2}_{D_{s}(\mathbb{R}^{N})}<k\}, \ \text{and}\ m(a,\mu)=\inf_{u\in A_{R_{0}}}E_{\mu}(u).$$
From Lemma \ref{Lem4}, we immediately have the following corollary:
\begin{corollary}\label{Lem6}
The set $\mathcal{P}^{+}_{a,\mu}$ is contained in $$A_{R_{0}}=\{u\in S_{a}: || u||_{D_{s}(\mathbb{R}^{N})}< R_{0}\}\ \text{ and }\ \sup_{\mathcal{P}^{+}_{a,\mu}}E_{\mu}\leq0\leq \inf_{\mathcal{P}^{-}_{a,\mu}}E_{\mu}.$$
\end{corollary}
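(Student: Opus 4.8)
The plan is to deduce the corollary directly from the fiber-map analysis of Lemma \ref{Lem4} together with the fact that $\mathcal{P}^{0}_{a,\mu}=\emptyset$ (Lemma \ref{Lem3}), so it is essentially a bookkeeping argument. The one preliminary point to settle is: for a given $u\in\mathcal{P}_{a,\mu}$, \emph{which} of the two critical points of $\Psi^{\mu}_{u}$ coincides with $0$. Since $(\Psi^{\mu}_{u})'(t)=P_{\mu}(t\star u)$, membership $u\in\mathcal{P}_{a,\mu}$ means exactly that $0$ is a critical point of $\Psi^{\mu}_{u}$; by Lemma \ref{Lem4}(1) the only critical points of $\Psi^{\mu}_{u}$ are the strict local minimum $a_{u}$ (with $a_{u}\star u\in\mathcal{P}^{+}_{a,\mu}$, and $(\Psi^{\mu}_{u})''(a_{u})>0$ because $\mathcal{P}^{0}_{a,\mu}=\emptyset$) and the global maximum $t_{u}$ (with $t_{u}\star u\in\mathcal{P}^{-}_{a,\mu}$ and $(\Psi^{\mu}_{u})''(t_{u})<0$). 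Comparing with the sign condition $(\Psi^{\mu}_{u})''(0)>0$ resp. $<0$ in the definitions of $\mathcal{P}^{\pm}_{a,\mu}$, I would conclude that $u\in\mathcal{P}^{+}_{a,\mu}$ forces $0=a_{u}$, while $u\in\mathcal{P}^{-}_{a,\mu}$ forces $0=t_{u}$.

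With this identification, the inclusion $\mathcal{P}^{+}_{a,\mu}\subset A_{R_{0}}$ follows at once: for $u\in\mathcal{P}^{+}_{a,\mu}$ one has $0=a_{u}<c_{u}$, where $c_{u}$ is the smaller zero of $\Psi^{\mu}_{u}$ from Lemma \ref{Lem4}; since $t\mapsto\|t\star u\|_{D_{s}(\mathbb{R}^{N})}=e^{st}\|u\|_{D_{s}(\mathbb{R}^{N})}$ is strictly increasing, Lemma \ref{Lem4}(2) (giving $\|t\star u\|_{D_{s}(\mathbb{R}^{N})}\le R_{0}$ for all $t\le c_{u}$) evaluated at $t=0$ yields $\|u\|_{D_{s}(\mathbb{R}^{N})}<\|c_{u}\star u\|_{D_{s}(\mathbb{R}^{N})}\le R_{0}$, i.e. $u\in A_{R_{0}}$. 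For the energy bounds I would just evaluate $E_{\mu}$ at the relevant critical point: if $u\in\mathcal{P}^{+}_{a,\mu}$ then $E_{\mu}(u)=\Psi^{\mu}_{u}(0)=\Psi^{\mu}_{u}(a_{u})=E_{\mu}(a_{u}\star u)<0$ by Lemma \ref{Lem4}(2), so $\sup_{\mathcal{P}^{+}_{a,\mu}}E_{\mu}\le0$; and if $u\in\mathcal{P}^{-}_{a,\mu}$ then $E_{\mu}(u)=\Psi^{\mu}_{u}(0)=\Psi^{\mu}_{u}(t_{u})=E_{\mu}(t_{u}\star u)>0$ by Lemma \ref{Lem4}(3), so $\inf_{\mathcal{P}^{-}_{a,\mu}}E_{\mu}\ge0$.

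I do not expect a genuine obstacle: the statement really is a direct consequence of Lemma \ref{Lem4}. The only place that needs slight care is producing the \emph{strict} inequality $\|u\|_{D_{s}(\mathbb{R}^{N})}<R_{0}$ rather than just $\le R_{0}$, which is why the argument leans on the strict ordering $a_{u}<c_{u}$ from Lemma \ref{Lem4} and on the strict monotonicity of the dilation $t\mapsto e^{st}\|u\|_{D_{s}(\mathbb{R}^{N})}$, instead of merely quoting Lemma \ref{Lem4}(2) at the endpoint $t=c_{u}$.
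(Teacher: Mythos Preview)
Your proof is correct and is exactly the natural unpacking of Lemma \ref{Lem4} that the paper intends; the paper itself gives no argument beyond the remark that the corollary follows immediately from Lemma \ref{Lem4}. Your care in obtaining the strict inequality $\|u\|_{D_{s}(\mathbb{R}^{N})}<R_{0}$ via the strict ordering $a_{u}<c_{u}$ and the monotonicity of $t\mapsto e^{st}\|u\|_{D_{s}(\mathbb{R}^{N})}$ is the right way to make that step precise.
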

\begin{lemma}\label{Lem7}
We have $m(a,\mu)\in (-\infty,0)$ that $$m(a,\mu)=\inf_{\mathcal{P}_{a,\mu}}E_{\mu}=\inf_{\mathcal{P}^{+}_{a,\mu}}E_{\mu}\ \text{and that}\ m(a,\mu)< \inf_{\overline{A_{R_{0}}}\setminus A_{R_{0}-\rho}}E_{\mu}$$ for $\rho>0$ small enough.
\end{lemma}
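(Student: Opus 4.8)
The plan is to reduce the statement to the one–variable function $h$ introduced in \eqref{equ5}, using the pointwise lower bound $E_{\mu}(u)\ge h(\|u\|_{D_{s}(\mathbb{R}^{N})})$ from \eqref{U16}, together with the structure of the fiber map $\Psi^{\mu}_{u}$ established in Lemma \ref{Lem4}. First I would check that $m(a,\mu)\in(-\infty,0)$. Finiteness from below is immediate: on $A_{R_{0}}$ one has $\|u\|_{D_{s}(\mathbb{R}^{N})}<R_{0}$, hence $E_{\mu}(u)\ge h(\|u\|_{D_{s}(\mathbb{R}^{N})})\ge\min_{[0,R_{0}]}h>-\infty$ by continuity of $h$ on a compact interval (Lemma \ref{Lem2}). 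For negativity, pick any $u\in S_{a}$; by Lemma \ref{Lem4}(1)--(2) its projection $a_{u}\star u$ belongs to $\mathcal{P}^{+}_{a,\mu}$ and satisfies $E_{\mu}(a_{u}\star u)<0$, while by Corollary \ref{Lem6} it lies in $A_{R_{0}}$, so $m(a,\mu)\le E_{\mu}(a_{u}\star u)<0$.

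Next I would prove that the three infima coincide. The inclusion $\mathcal{P}^{+}_{a,\mu}\subset A_{R_{0}}$ from Corollary \ref{Lem6} gives at once $\inf_{\mathcal{P}^{+}_{a,\mu}}E_{\mu}\ge m(a,\mu)$. For the opposite inequality, fix $u\in A_{R_{0}}$; since $\|0\star u\|_{D_{s}(\mathbb{R}^{N})}=\|u\|_{D_{s}(\mathbb{R}^{N})}<R_{0}$, the value $t=0$ is admissible in the constrained minimisation characterising $a_{u}\star u$ in Lemma \ref{Lem4}(2), so $E_{\mu}(a_{u}\star u)\le E_{\mu}(u)$ with $a_{u}\star u\in\mathcal{P}^{+}_{a,\mu}$; taking the infimum over $u\in A_{R_{0}}$ yields $m(a,\mu)\ge\inf_{\mathcal{P}^{+}_{a,\mu}}E_{\mu}$, hence equality. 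Finally, by Lemma \ref{Lem3} we have $\mathcal{P}^{0}_{a,\mu}=\emptyset$, so $\mathcal{P}_{a,\mu}=\mathcal{P}^{+}_{a,\mu}\cup\mathcal{P}^{-}_{a,\mu}$, and Corollary \ref{Lem6} gives $\inf_{\mathcal{P}^{-}_{a,\mu}}E_{\mu}\ge0>m(a,\mu)=\inf_{\mathcal{P}^{+}_{a,\mu}}E_{\mu}$; thus the infimum over $\mathcal{P}_{a,\mu}$ is attained on the $\mathcal{P}^{+}$ part and equals $m(a,\mu)$.

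For the last assertion I would argue on the ``annulus'' $\overline{A_{R_{0}}}\setminus A_{R_{0}-\rho}=\{u\in S_{a}:R_{0}-\rho\le\|u\|_{D_{s}(\mathbb{R}^{N})}\le R_{0}\}$. There \eqref{U16} gives $E_{\mu}(u)\ge h(\|u\|_{D_{s}(\mathbb{R}^{N})})$. By the description of $h$ in Lemma \ref{Lem2} its only critical points are a local minimum at negative level and a global maximum at positive level, the former lying in $(0,R_{0})$ and the latter in $(R_{0},R_{1})$; hence $h$ is strictly increasing on a left neighbourhood of $R_{0}$, so for $\rho>0$ small $\inf_{[R_{0}-\rho,R_{0}]}h=h(R_{0}-\rho)$, and $h(R_{0}-\rho)\to h(R_{0})=0$ as $\rho\to0^{+}$. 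Since $m(a,\mu)<0$, for $\rho$ small enough $h(R_{0}-\rho)>m(a,\mu)$, and therefore $\inf_{\overline{A_{R_{0}}}\setminus A_{R_{0}-\rho}}E_{\mu}\ge h(R_{0}-\rho)>m(a,\mu)$, which is the claim.

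The step I expect to be the main obstacle is the identification $m(a,\mu)=\inf_{\mathcal{P}^{+}_{a,\mu}}E_{\mu}$: one must be sure that projecting an arbitrary element $u\in A_{R_{0}}$ to $a_{u}\star u$ keeps it inside $A_{R_{0}}$ and does not increase the energy above $E_{\mu}(u)$. This is precisely guaranteed by the characterisation of $a_{u}$ in Lemma \ref{Lem4}(2) as the minimiser of $t\mapsto E_{\mu}(t\star u)$ over $\{t:\|t\star u\|_{D_{s}(\mathbb{R}^{N})}<R_{0}\}$, together with $\|t\star u\|_{D_{s}(\mathbb{R}^{N})}\le R_{0}$ for $t\le c_{u}$; everything else is routine once the shape of $h$ from Lemma \ref{Lem2} and the splitting $\mathcal{P}_{a,\mu}=\mathcal{P}^{+}_{a,\mu}\cup\mathcal{P}^{-}_{a,\mu}$ are available.
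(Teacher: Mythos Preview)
Your proof is correct and follows essentially the same route as the paper's: both use the lower bound $E_{\mu}(u)\ge h(\|u\|_{D_{s}(\mathbb{R}^{N})})$ together with Lemma~\ref{Lem2} for the bounds on $m(a,\mu)$, the projection $u\mapsto a_{u}\star u$ from Lemma~\ref{Lem4}(2) and Corollary~\ref{Lem6} to identify $m(a,\mu)=\inf_{\mathcal{P}^{+}_{a,\mu}}E_{\mu}=\inf_{\mathcal{P}_{a,\mu}}E_{\mu}$, and the continuity of $h$ at $R_{0}$ (where $h(R_{0})=0>m(a,\mu)$) for the annulus estimate. The only cosmetic differences are that the paper obtains $m(a,\mu)<0$ by sending $t\to-\infty$ directly rather than via $a_{u}\star u$, and bounds $h$ on $[R_{0}-\rho,R_{0}]$ by $m(a,\mu)/2$ via continuity rather than by monotonicity; neither changes the argument.
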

\begin{proof}
For $u\in A_{R_{0}}$, we have $$E_{\mu}(u)\geq h(\|u\|_{D_{s}(\mathbb{R}^{N})})\geq \min_{t\in [0,R_{0}]}h(t)>-\infty.$$ Therefore, $m(a,\mu)>-\infty.$ Moreover, for any $u\in S_{a}$, we obtain $|| t\star u||_{D_{s}(\mathbb{R}^{N})}< R_{0}$ and $E_{\mu}(t\star u)<0$ for $t\ll -1$ and hence $m(a,\mu)<0.$ Since $\mathcal{P}^{+}_{a,\mu}\subset A_{R_{0}}$, we know that $m(a,\mu)\leq \inf_{\mathcal{P}^{+}_{a,\mu}}E_{\mu} .$ On the other hand, if $u\in A_{R_{0}} $, then $a_{u}\star u \in \mathcal{P}^{+}_{a,\mu}\subset A_{R_{0}}$ and $$E_{\mu}(u)(a_{u}\star u)=\min\{E_{\mu}(t\star u):t\in \mathbb{R}\ \text{and}\ ||t\star u||_{D_{s}(\mathbb{R}^{N})}< R_{0} \}\leq E_{\mu}(u),$$ which implies that $\inf_{\mathcal{P}^{+}_{a,\mu}}E_{\mu}\leq m(a,\mu)$. Since $E_{\mu}>0$ on $\mathcal{P}^{-}_{a,\mu}$, we know that $\inf_{\mathcal{P}^{+}_{a,\mu}}E_{\mu}=\inf_{\mathcal{P}_{a,\mu}}E_{\mu}.$ Finally, by the continuity of $h$ there exists $\rho>0$ such that $h(t)\geq \frac{m(a,\mu)}{2}$ if $t\in [R_{0}-\rho,R_{0}]$. Therefore, $$E_{\mu}(u)\geq h(\|u\|_{D_{s}(\mathbb{R}^{N})})\geq \frac{m(a,\mu)}{2}>m(a,\mu)$$ for every $u\in S_{a}$ with $R_{0}-\rho\leq \|u\|_{D_{s}(\mathbb{R}^{N})}\leq R_{0}$.
This completes the proof.
\end{proof}
\section{$L^{2}$-critical perturbation}\label{sec3}
In this section, we consider the case $N>2s$  and $2<q=\overline{p}$.
Assume that
\begin{align}\label{U20}
\mu a^{\frac{4s}{N}}< \overline{p}\left(2C^{\overline{p}}_{N,\overline{p},s}\right)^{-1}.
\end{align}
We recall the decomposition of $$\mathcal{P}_{a,\mu}=\mathcal{P}^{+}_{a,\mu}\cup \mathcal{P}^{0}_{a,\mu}\cup \mathcal{P}^{-}_{a,\mu}.$$
\begin{lemma}\label{Lem9}
$\mathcal{P}^{0}_{a,\mu}=\emptyset$ and $\mathcal{P}_{a,\mu}$ is a smooth manifold of codimension 1 in $S_{a}$.
\end{lemma}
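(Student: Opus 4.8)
The plan is to repeat the proof of Lemma~\ref{Lem3} almost word for word, exploiting the algebraic peculiarity of the $L^{2}$-critical exponent, namely $\overline{p}\gamma_{\overline{p},s}=2$ (equivalently $\gamma_{\overline{p},s}=2/\overline{p}$, whence $\overline{p}\gamma^{2}_{\overline{p},s}=2\gamma_{\overline{p},s}$), which collapses the first half of the argument to a one-line computation.

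\emph{Step 1: $\mathcal{P}^{0}_{a,\mu}=\emptyset$.} Suppose $u\in\mathcal{P}^{0}_{a,\mu}$. From $P_{\mu}(u)=0$ one has
\[
\|u\|^{2}_{D_{s}(\mathbb{R}^{N})}=\mu\gamma_{\overline{p},s}\int_{\mathbb{R}^{N}}|u|^{\overline{p}}\,dx+\int_{\mathbb{R}^{N}}|u|^{2_{s}^{\ast}}\,dx ,
\]
while $(\Psi^{\mu}_{u})''(0)=0$, after dividing by $s^{2}$ and using $\overline{p}\gamma^{2}_{\overline{p},s}=2\gamma_{\overline{p},s}$, reads
\[
2\|u\|^{2}_{D_{s}(\mathbb{R}^{N})}=2\mu\gamma_{\overline{p},s}\int_{\mathbb{R}^{N}}|u|^{\overline{p}}\,dx+2_{s}^{\ast}\int_{\mathbb{R}^{N}}|u|^{2_{s}^{\ast}}\,dx .
\]
Subtracting twice the first relation from the second gives $0=(2_{s}^{\ast}-2)\int_{\mathbb{R}^{N}}|u|^{2_{s}^{\ast}}\,dx$, hence $u\equiv0$, contradicting $\|u\|^{2}_{L^{2}(\mathbb{R}^{N})}=a^{2}>0$. (Under \eqref{U20} one also sees that $\mathcal{P}_{a,\mu}\neq\emptyset$: by \eqref{U3} with $\overline{p}\gamma_{\overline{p},s}=2$ we get $\|u\|^{2}_{D_{s}(\mathbb{R}^{N})}-\mu\gamma_{\overline{p},s}\int_{\mathbb{R}^{N}}|u|^{\overline{p}}\,dx\geq\bigl(1-\tfrac{2\mu}{\overline{p}}C^{\overline{p}}_{N,\overline{p},s}a^{4s/N}\bigr)\|u\|^{2}_{D_{s}(\mathbb{R}^{N})}>0$ for every $u\in S_{a}$, so each fiber map $\Psi^{\mu}_{u}$ possesses a unique critical point and $\mathcal{P}_{a,\mu}=\mathcal{P}^{-}_{a,\mu}$.)

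\emph{Step 2: $\mathcal{P}_{a,\mu}$ is a smooth manifold of codimension $1$ in $S_{a}$.} I would argue as in Lemma~\ref{Lem3}: $\mathcal{P}_{a,\mu}$ is the zero set in $H^{s}(\mathbb{R}^{N})$ of the $C^{1}$ map $u\mapsto(P_{\mu}(u),\,G(u)-a^{2})$ with $G(u)=\int_{\mathbb{R}^{N}}|u|^{2}\,dx$, so it suffices to check that $d(P_{\mu},G)(u)$ is surjective onto $\mathbb{R}^{2}$ at every $u\in\mathcal{P}_{a,\mu}$. If not, then $dP_{\mu}(u)=\nu\,dG(u)$ for some $\nu\in\mathbb{R}$, i.e. $u$ solves
\[
2s(-\Delta)^{s}u=\nu u+\mu\overline{p}\gamma_{\overline{p},s}s\,|u|^{\overline{p}-2}u+2_{s}^{\ast}s\,|u|^{2_{s}^{\ast}-2}u\quad\text{in }\mathbb{R}^{N}.
\]
Applying the fractional Pohozaev identity of Proposition~\ref{pro} to this equation and combining it with the same equation tested against $u$ (the manipulation that proves Lemma~\ref{Lemp}) yields $2s^{2}\|u\|^{2}_{D_{s}(\mathbb{R}^{N})}=\mu\overline{p}\gamma^{2}_{\overline{p},s}s^{2}\int_{\mathbb{R}^{N}}|u|^{\overline{p}}\,dx+2_{s}^{\ast}s^{2}\int_{\mathbb{R}^{N}}|u|^{2_{s}^{\ast}}\,dx$, that is, $u\in\mathcal{P}^{0}_{a,\mu}$, contradicting Step~1. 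Hence $d(P_{\mu},G)(u)$ is surjective on all of $\mathcal{P}_{a,\mu}$, so $\mathcal{P}_{a,\mu}$ is a smooth submanifold of codimension $1$ in $S_{a}$, i.e. a natural constraint for $E_{\mu}$.

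\emph{Main obstacle.} The only non-algebraic point is the legitimacy of the Pohozaev identity for the multiplier equation in Step~2, which requires $u\in L^{\infty}(\mathbb{R}^{N})$ and the corresponding primitive to lie in $L^{1}(\mathbb{R}^{N})$. This is precisely the regularity already discussed in the Remark following Proposition~\ref{pro} for \eqref{int1}, and the same bootstrap (cf.\ \cite{BCPS,PAT12}) transfers verbatim since the multiplier nonlinearity has the same growth. Everything else is a direct transcription of Lemma~\ref{Lem3}, with the simplification that the inequality chain used there to rule out $\mathcal{P}^{0}_{a,\mu}$ is replaced by the one-line computation of Step~1 thanks to $\overline{p}\gamma_{\overline{p},s}=2$; I do not expect any genuine difficulty.
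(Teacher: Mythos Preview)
Your proposal is correct and follows essentially the same route as the paper: both argue by contradiction that $u\in\mathcal{P}^{0}_{a,\mu}$ forces $(2_{s}^{\ast}-2)\int_{\mathbb{R}^{N}}|u|^{2_{s}^{\ast}}\,dx=0$ via the identity $\overline{p}\gamma_{\overline{p},s}^{2}=2\gamma_{\overline{p},s}$, and then defer the manifold structure to the surjectivity argument of Lemma~\ref{Lem3}. Your parenthetical remark on $\mathcal{P}_{a,\mu}\neq\emptyset$ and the regularity discussion for the Pohozaev identity are not in the paper's proof of this lemma (the former is the content of Lemma~\ref{Lem8}, the latter is absorbed into the Remark after Proposition~\ref{pro}), but they do no harm.
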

\begin{proof}
Assume by contradiction that if there exists a $u\in \mathcal{P}^{0}_{a,\mu}$, then
\begin{align}\label{U17}
||u||^{2}_{D_{s}(\mathbb{R}^{N})}-\mu\gamma_{q,s}\int_{\mathbb{R}^{N}} |u|^{q}dx-\int_{\mathbb{R}^{N}} |u|^{2_{s}^{\ast}}dx=0,
\end{align}
and
\begin{align}\label{U18}
2||u||^{2}_{D_{s}(\mathbb{R}^{N})} =\mu 2\gamma_{q,s}\int_{\mathbb{R}^{N}} |u|^{q}dx+2_{s}^{\ast}\int_{\mathbb{R}^{N}} |u|^{2_{s}^{\ast}}dx,
\end{align}
thus, from \eqref{U17} and \eqref{U18}, we have $\int_{\mathbb{R}^{N}} |u|^{2_{s}^{\ast}}dx=0$, which  is not possible since $u\in S_{a}.$ The rest of the proof is similar to that of Lemma \ref{Lem3}, so we omit the details here.
\end{proof}
\begin{lemma}\label{Lem8}
Under the condition of \eqref{U20}, for every $u\in S_{a}$, there is a unique $t_{u}\in \mathbb{R}$ such that $t_{u}\star u \in \mathcal{P}_{a,\mu},$  where $t_{u}$ is the unique critical point of the function of $\Psi^{\mu}_{u}$ and is a strict maximum point at positive level. Moreover,
\begin{enumerate}
\item[$(1)$] $\mathcal{P}_{a,\mu}=\mathcal{P}^{-}_{a,\mu}$.

\item[$(2)$] $\Psi^{\mu}_{u}(t)$ is strictly decreasing and concave on $(t_{u}, +\infty)$ and $t_{u}<0$ implies that $P_{\mu}(u)<0.$

\item[$(3)$] The map $u\in S_{a}: t_{u}\in \mathbb{R}$ os of class $C^{1}$.

\item[$(4)$] If $P_{\mu}(u)<0$, then $t_{u}<0$.
\end{enumerate}
\end{lemma}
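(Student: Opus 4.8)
The plan is to exploit the special structure of the fiber map $\Psi^{\mu}_{u}$ from \eqref{pp} in the $L^{2}$-critical regime. Since $q=\overline{p}$ forces $\overline{p}\gamma_{\overline{p},s}=2$ by \eqref{int45}, the first two terms of $\Psi^{\mu}_{u}$ carry the same exponential factor $e^{2st}$, so that
\[
\Psi^{\mu}_{u}(t)=\tfrac12 A(u)\,e^{2st}-\tfrac{1}{2_{s}^{\ast}}B(u)\,e^{2_{s}^{\ast}st},
\]
with $A(u):=\|u\|^{2}_{D_{s}(\mathbb{R}^{N})}-\frac{2\mu}{\overline{p}}\int_{\mathbb{R}^{N}}|u|^{\overline{p}}dx$ and $B(u):=\int_{\mathbb{R}^{N}}|u|^{2_{s}^{\ast}}dx$. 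The first step is to prove $A(u)>0$ for every $u\in S_{a}$: by the fractional Gagliardo--Nirenberg--Sobolev inequality \eqref{U3} with $p=\overline{p}$ one has $\int_{\mathbb{R}^{N}}|u|^{\overline{p}}dx\le C^{\overline{p}}_{N,\overline{p},s}\|u\|^{2}_{D_{s}(\mathbb{R}^{N})}a^{4s/N}$, hence $A(u)\ge\|u\|^{2}_{D_{s}(\mathbb{R}^{N})}\bigl(1-\tfrac{2\mu}{\overline{p}}C^{\overline{p}}_{N,\overline{p},s}a^{4s/N}\bigr)$, and the bracket is strictly positive precisely under \eqref{U20}; since $u\in S_{a}$ has $\|u\|^{2}_{D_{s}(\mathbb{R}^{N})}>0$, this yields $A(u)>0$, and clearly $B(u)>0$.

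Once $A(u),B(u)>0$, everything is explicit. I would compute $(\Psi^{\mu}_{u})'(t)=s\,e^{2st}\bigl(A(u)-B(u)e^{(2_{s}^{\ast}-2)st}\bigr)$, read off the unique zero
\[
t_{u}=\frac{1}{(2_{s}^{\ast}-2)s}\ln\frac{A(u)}{B(u)},
\]
and note that $(\Psi^{\mu}_{u})'>0$ on $(-\infty,t_{u})$ and $<0$ on $(t_{u},+\infty)$, so $t_{u}$ is the unique critical point of $\Psi^{\mu}_{u}$ and a strict global maximum; evaluating at $t_{u}$ through the identity $A(u)e^{2st_{u}}=B(u)e^{2_{s}^{\ast}st_{u}}$ gives $\Psi^{\mu}_{u}(t_{u})=\frac{s}{N}A(u)e^{2st_{u}}>0$, so the maximum is at positive level. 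By the equivalence recalled after \eqref{pp}, $t\star u\in\mathcal{P}_{a,\mu}$ iff $t$ is a critical point of $\Psi^{\mu}_{u}$, so $t_{u}$ is the unique real number with $t_{u}\star u\in\mathcal{P}_{a,\mu}$. A direct computation gives $(\Psi^{\mu}_{u})''(t_{u})=s^{2}A(u)e^{2st_{u}}(2-2_{s}^{\ast})<0$, so $t_{u}\star u\in\mathcal{P}^{-}_{a,\mu}$ for every $u$; hence $\mathcal{P}^{+}_{a,\mu}=\emptyset$, and since $\mathcal{P}^{0}_{a,\mu}=\emptyset$ by Lemma \ref{Lem9} and $\mathcal{P}_{a,\mu}=\mathcal{P}^{+}_{a,\mu}\cup\mathcal{P}^{0}_{a,\mu}\cup\mathcal{P}^{-}_{a,\mu}$, claim (1) follows.

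For (2) and (4) I would use $(\Psi^{\mu}_{u})''(t)=s^{2}e^{2st}\bigl(2A(u)-2_{s}^{\ast}B(u)e^{(2_{s}^{\ast}-2)st}\bigr)$, whose bracketed factor is strictly decreasing in $t$ and already negative at $t_{u}$ (as $2/2_{s}^{\ast}<1$); thus $(\Psi^{\mu}_{u})''<0$ on $[t_{u},+\infty)$, so $\Psi^{\mu}_{u}$ is strictly decreasing and concave there. If $t_{u}<0$ then $0\in(t_{u},+\infty)$ and $P_{\mu}(u)=(\Psi^{\mu}_{u})'(0)<0$; conversely, since $(\Psi^{\mu}_{u})'$ is positive exactly on $(-\infty,t_{u})$, $P_{\mu}(u)=(\Psi^{\mu}_{u})'(0)<0$ forces $t_{u}<0$, which is (4). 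For the $C^{1}$ regularity (3), I would apply the implicit function theorem to $\Phi(t,u)=(\Psi^{\mu}_{u})'(t)$ at $(t_{u},u)$, where $\partial_{t}\Phi(t_{u},u)=(\Psi^{\mu}_{u})''(t_{u})\ne0$; alternatively, $A$ and $B$ are $C^{1}$ functionals strictly positive on $S_{a}$, so the explicit formula for $t_{u}$ already displays $u\mapsto t_{u}$ as $C^{1}$.

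The main obstacle, and really the only substantive point, is the uniform positivity $A(u)>0$ throughout $S_{a}$, which is exactly what \eqref{U20} is designed to guarantee. Once that is secured the analysis is entirely elementary, since in the $L^{2}$-critical case the fiber map involves only the two exponentials $e^{2st}$ and $e^{2_{s}^{\ast}st}$; in particular there is no analogue of the auxiliary function $h$ and no $\mathcal{P}^{+}_{a,\mu}$-component, in contrast with the three-exponential situation of Lemma \ref{Lem4}.
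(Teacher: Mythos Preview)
Your proof is correct and follows essentially the same approach as the paper's: both reduce the $L^{2}$-critical fiber map to the two-exponential form $\tfrac12 A(u)e^{2st}-\tfrac{1}{2_{s}^{\ast}}B(u)e^{2_{s}^{\ast}st}$, verify $A(u)>0$ via the fractional Gagliardo--Nirenberg inequality under \eqref{U20}, and read off the unique strict maximizer. Your version is slightly more explicit (closed formula for $t_{u}$, direct computation of $(\Psi^{\mu}_{u})''(t_{u})$), which in fact makes the citation of Lemma~\ref{Lem9} for $\mathcal{P}^{0}_{a,\mu}=\emptyset$ redundant, but the underlying argument is the same.
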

\begin{proof}
Since
\begin{equation*}
 \Psi^{\mu}_{u}(t)=E_{\mu}(t\star u)=\left[\frac{1}{2}||u||^{2}_{D_{s}(\mathbb{R}^{N})}-\frac{\mu}{\overline{p}}\int_{\mathbb{R}^{N}} |u|^{\overline{p}}dx\right]e^{2st}-\frac{e^{2_{s}^{\ast}st}}{2_{s}^{\ast}}\int_{\mathbb{R}^{N}} |u|^{2_{s}^{\ast}}dx,
\end{equation*}
and  $t\star u\in \mathcal{P}_{a,\mu} $  if and only if $(\Psi^{\mu}_{u})'(t)=0$, it is easy to see that if  $\left[\frac{1}{2}||u||^{2}_{D_{s}(\mathbb{R}^{N})}-\frac{\mu}{\overline{p}}\int_{\mathbb{R}^{N}} |u|^{\overline{p}}dx\right]$ is positive, then $\Psi^{\mu}_{u}(t)$ has a unique critical point $t_{u}$, which is is a strict maximum point at positive level. By the fractional Gagliardo-Nirenberg-Sobolev inequality \eqref{U3}, we have $$\frac{1}{2}||u||^{2}_{D_{s}(\mathbb{R}^{N})}-\frac{\mu}{\overline{p}}\int_{\mathbb{R}^{N}} |u|^{\overline{p}}dx\geq \left(\frac{1}{2}-\frac{\mu}{\overline{p}}C^{\overline{p}}_{N,\overline{p},s}a^{\frac{4s}{N}}\right)||u||^{2}_{D_{s}(\mathbb{R}^{N})},$$ so under the condition of $\mu a^{\frac{4s}{N}}< \overline{p}(2C^{\overline{p}}_{N,\overline{p},s})^{-1},$ we know that $\frac{1}{2}||u||^{2}_{D_{s}(\mathbb{R}^{N})}-\frac{\mu}{\overline{p}}\int_{\mathbb{R}^{N}} |u|^{\overline{p}}dx>0.$ Since, if $u\in \mathcal{P}_{a,\mu}$, then $t_{u}$ is a maximum point, we have that $\Psi^{\mu}_{u}(0)\leq 0.$ Since $\mathcal{P}^{0}_{a,\mu}=\emptyset,$ we have $\Psi^{\mu}_{u}(0)<0.$ Thus, $\mathcal{P}_{a,\mu}=\mathcal{P}^{-}_{a,\mu}.$ To prove that the map $u\in S_{a}: t_{u}\in \mathbb{R}$ is of class $C^{1}$, we can apply the implicit function theorem as in Lemma \ref{Lem4}. Finally, since $(\Psi^{\mu}_{u})'(t)<0$ if and only if $t>t_{u}$, so $P_{\mu}(u)=(\Psi^{\mu}_{u})'(0)<0$ if and only if $t_{u}<0$.
\end{proof}
\begin{lemma}\label{Lem10}
$$m(a,\mu)=\inf_{\mathcal{P}_{a,\mu}}E_{\mu}>0.$$
\end{lemma}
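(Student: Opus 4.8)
The plan is to observe that on $\mathcal{P}_{a,\mu}$ the energy collapses to one manifestly nonnegative term, and then to upgrade nonnegativity to a uniform positive bound using the smallness hypothesis \eqref{U20}. Since $q=\overline{p}$ gives $\overline{p}\gamma_{\overline{p},s}=2$, if $u\in\mathcal{P}_{a,\mu}$ then $P_{\mu}(u)=0$ reads $\|u\|^{2}_{D_{s}(\mathbb{R}^{N})}=\mu\gamma_{\overline{p},s}\int_{\mathbb{R}^{N}}|u|^{\overline{p}}dx+\int_{\mathbb{R}^{N}}|u|^{2_{s}^{\ast}}dx$, and substituting this into \eqref{eq1.2} makes the $\|u\|^{2}_{D_{s}(\mathbb{R}^{N})}$ and $\int|u|^{\overline{p}}$ contributions cancel, leaving
$$E_{\mu}(u)=\Big(\tfrac12-\tfrac1{2_{s}^{\ast}}\Big)\int_{\mathbb{R}^{N}}|u|^{2_{s}^{\ast}}dx=\frac{s}{N}\int_{\mathbb{R}^{N}}|u|^{2_{s}^{\ast}}dx\ge0 .$$
Thus it suffices to bound $\int_{\mathbb{R}^{N}}|u|^{2_{s}^{\ast}}dx$ away from $0$ uniformly over $\mathcal{P}_{a,\mu}$.

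For that uniform bound I would feed the fractional Gagliardo--Nirenberg--Sobolev inequality \eqref{U3} --- noting $\overline{p}\gamma_{\overline{p},s}=2$ and $\overline{p}(1-\gamma_{\overline{p},s})=\overline{p}-2=4s/N$ --- together with the Sobolev inequality \eqref{int2} into $P_{\mu}(u)=0$, obtaining
$$\|u\|^{2}_{D_{s}(\mathbb{R}^{N})}\le \mu\gamma_{\overline{p},s}C^{\overline{p}}_{N,\overline{p},s}a^{\frac{4s}{N}}\|u\|^{2}_{D_{s}(\mathbb{R}^{N})}+S_{s}^{-\frac{2_{s}^{\ast}}{2}}\|u\|^{2_{s}^{\ast}}_{D_{s}(\mathbb{R}^{N})} .$$
Because $\gamma_{\overline{p},s}=2/\overline{p}$, condition \eqref{U20} is exactly that $\delta:=1-\mu\gamma_{\overline{p},s}C^{\overline{p}}_{N,\overline{p},s}a^{4s/N}\in(0,1)$, so $\delta\|u\|^{2}_{D_{s}(\mathbb{R}^{N})}\le S_{s}^{-2_{s}^{\ast}/2}\|u\|^{2_{s}^{\ast}}_{D_{s}(\mathbb{R}^{N})}$, hence $\|u\|^{2}_{D_{s}(\mathbb{R}^{N})}\ge\delta^{2/(2_{s}^{\ast}-2)}S_{s}^{N/(2s)}$. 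Plugging the same estimate $\mu\gamma_{\overline{p},s}\int|u|^{\overline{p}}\le(1-\delta)\|u\|^{2}_{D_{s}(\mathbb{R}^{N})}$ back into $P_{\mu}(u)=0$ gives $\int_{\mathbb{R}^{N}}|u|^{2_{s}^{\ast}}dx\ge\delta\|u\|^{2}_{D_{s}(\mathbb{R}^{N})}\ge\delta^{N/(2s)}S_{s}^{N/(2s)}$, so $E_{\mu}(u)\ge\frac{s}{N}\delta^{N/(2s)}S_{s}^{N/(2s)}>0$ on all of $\mathcal{P}_{a,\mu}$, whence $\inf_{\mathcal{P}_{a,\mu}}E_{\mu}>0$.

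Finally, to identify this infimum with $m(a,\mu)$ I would invoke Lemma \ref{Lem8}: every $u\in S_{a}$ admits a unique $t_{u}$ with $t_{u}\star u\in\mathcal{P}_{a,\mu}$, this $t_{u}$ being the global maximum of $\Psi^{\mu}_{u}$, while $t_{v}=0$ for $v\in\mathcal{P}_{a,\mu}$; hence $u\mapsto t_{u}\star u$ maps $S_{a}$ onto $\mathcal{P}_{a,\mu}$ and $\max_{t\in\mathbb{R}}E_{\mu}(t\star u)=E_{\mu}(t_{u}\star u)$. Therefore the mountain-pass value $m(a,\mu)=\inf_{u\in S_{a}}\max_{t\in\mathbb{R}}E_{\mu}(t\star u)$ (in whichever of the equivalent forms is used for Theorem \ref{TH2}) equals $\inf_{\mathcal{P}_{a,\mu}}E_{\mu}$, which we have just shown is strictly positive; note also $E_{\mu}>0$ for $\|u\|_{D_{s}(\mathbb{R}^{N})}$ small by \eqref{U3} and \eqref{U20}, consistent with the linking geometry. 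I expect the only real obstacle to be bookkeeping: matching the constant in \eqref{U20} to $\delta$ through the chain of exponents $\gamma_{\overline{p},s},\overline{p},2_{s}^{\ast}$, and checking that the min-max class used downstream is compatible with the unique-fiber-maximum property, but no new idea beyond Lemma \ref{Lem8} and the Sobolev/Gagliardo--Nirenberg inequalities is required.
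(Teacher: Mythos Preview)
Your proof is correct and essentially identical to the paper's: both feed the Gagliardo--Nirenberg--Sobolev and Sobolev inequalities into $P_{\mu}(u)=0$ to get a uniform lower bound on $\|u\|_{D_{s}(\mathbb{R}^{N})}$, then deduce $E_{\mu}(u)\ge\frac{s}{N}\delta\,\|u\|_{D_{s}(\mathbb{R}^{N})}^{2}>0$ (your expression $E_{\mu}(u)=\frac{s}{N}\int|u|^{2_{s}^{\ast}}$ and the paper's $E_{\mu}(u)=\frac{s}{N}\bigl(\|u\|_{D_{s}}^{2}-\frac{2\mu}{\overline{p}}\int|u|^{\overline{p}}\bigr)$ coincide on $\mathcal{P}_{a,\mu}$ by the Pohozaev identity). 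Your final paragraph is unnecessary here: in this section the paper simply \emph{defines} $m(a,\mu):=\inf_{\mathcal{P}_{a,\mu}}E_{\mu}$, and the identification with the min-max value is carried out later in the proof of Theorem~\ref{TH2}.
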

\begin{proof}
If $u\in \mathcal{P}_{a,\mu},$ then $P_{\mu}(u)=0$, and then by fractional Gagliardo-Nirenberg-Sobolev inequality \eqref{U3} and Sobolev inequality \eqref{int2}, we have
\begin{align*}
||u||^{2}_{D_{s}(\mathbb{R}^{N})}=\mu\frac{2}{\overline{p}}\int_{\mathbb{R}^{N}} |u|^{\overline{p}}dx+\int_{\mathbb{R}^{N}} |u|^{2_{s}^{\ast}}dx
\leq \mu\frac{2}{\overline{p}}C^{\overline{p}}_{N,\overline{p},s}a^{\frac{4s}{N}}||u||^{2}_{D_{s}(\mathbb{R}^{N})}+S^{-\frac{2_{s}^{\ast}}{2}}_{s}||u||^{2_{s}^{\ast}}_{D_{s}(\mathbb{R}^{N})}
\end{align*}
From \eqref{U20} and above inequality, we have
\begin{align}\label{U19}
||u||^{2_{s}^{\ast}}_{D_{s}(\mathbb{R}^{N})}\geq S^{\frac{2_{s}^{\ast}}{2}}_{s}\left(1-\mu\frac{2}{\overline{p}}C^{\overline{p}}_{N,\overline{p},s}a^{\frac{4s}{N}}\right)||u||^{2}_{D_{s}(\mathbb{R}^{N})}\Rightarrow \inf_{\mathcal{P}_{a,\mu}}||u||_{D_{s}(\mathbb{R}^{N})}>0.
\end{align}
Thus, from $P_{\mu}(u)=0$ and above inequality, we have
\begin{align*}
E_{\mu}(u)
&=\frac{s}{N}\left(||u||^{2}_{D_{s}(\mathbb{R}^{N})}-\frac{2\mu}{\overline{p}}\int_{\mathbb{R}^{N}}|u|^{\overline{p}}dx\right)\\
&\geq  \frac{s}{N} \left(1-\mu\frac{2}{\overline{p}}C^{\overline{p}}_{N,\overline{p},s}a^{\frac{4s}{N}}\right)||u||^{2}_{D_{s}(\mathbb{R}^{N})}> 0.
\end{align*}
Therefore, $$m(a,\mu)=\inf_{\mathcal{P}_{a,\mu}}E_{\mu}>0.$$
As required.
\end{proof}
\begin{lemma}\label{Lem11}
There exists $k>0$ sufficiently small such that $$0<\sup_{\overline{A_{k}}}E_{\mu}<m(a,\mu)\ \text{and}\ u\in \overline{A_{k}}\Rightarrow E_{\mu}(u), P_{\mu}(u)>0.$$
where $ A_{k}=\left\{u\in S_{a}: ||u||^{2}_{D_{s}(\mathbb{R}^{N})}<k\right\}.$
\end{lemma}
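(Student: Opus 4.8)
The plan is to exploit the fact that on the $L^2$-critical setting the energy $E_\mu$ restricted to $S_a$ is, for small $\|u\|_{D_s}$, dominated by the quadratic part with a favorable coefficient. First I would record the basic estimate: for $u\in S_a$, combining the fractional Gagliardo--Nirenberg--Sobolev inequality \eqref{U3} with exponent $\overline p$ (so that $\overline p\gamma_{\overline p,s}=2$) and the Sobolev inequality \eqref{int2}, one gets
\begin{align*}
E_\mu(u)&\geq \frac12\|u\|^2_{D_s(\mathbb{R}^N)}-\frac{\mu}{\overline p}C^{\overline p}_{N,\overline p,s}a^{\frac{4s}{N}}\|u\|^2_{D_s(\mathbb{R}^N)}-\frac{1}{2_s^\ast}S_s^{-\frac{2_s^\ast}{2}}\|u\|^{2_s^\ast}_{D_s(\mathbb{R}^N)}\\
&=\left(\frac12-\frac{\mu}{\overline p}C^{\overline p}_{N,\overline p,s}a^{\frac{4s}{N}}\right)\|u\|^2_{D_s(\mathbb{R}^N)}-\frac{1}{2_s^\ast}S_s^{-\frac{2_s^\ast}{2}}\|u\|^{2_s^\ast}_{D_s(\mathbb{R}^N)}.
\end{align*}
Under assumption \eqref{U20} the coefficient $\tfrac12-\tfrac{\mu}{\overline p}C^{\overline p}_{N,\overline p,s}a^{\frac{4s}{N}}$ is strictly positive, call it $\delta>0$. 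Since $2_s^\ast>2$, the function $t\mapsto \delta t^2-\tfrac{1}{2_s^\ast}S_s^{-2_s^\ast/2}t^{2_s^\ast}$ is strictly positive on a punctured neighborhood of $0$; hence there is $k_0>0$ with $E_\mu(u)>0$ whenever $0<\|u\|^2_{D_s}\le k_0$. A similar computation for $P_\mu$, using $P_\mu(u)=s\|u\|^2_{D_s}-\mu\gamma_{\overline p,s}s\int|u|^{\overline p}-s\int|u|^{2_s^\ast}\ge s\big(1-\tfrac{2\mu}{\overline p}C^{\overline p}_{N,\overline p,s}a^{4s/N}\big)\|u\|^2_{D_s}-sS_s^{-2_s^\ast/2}\|u\|^{2_s^\ast}_{D_s}$, shows $P_\mu(u)>0$ for $\|u\|^2_{D_s}$ small, say $\le k_1$; note \eqref{U20} also guarantees $1-\tfrac{2\mu}{\overline p}C^{\overline p}_{N,\overline p,s}a^{4s/N}>0$.

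Next I would bound $\sup_{\overline{A_k}}E_\mu$ from above: for $\|u\|^2_{D_s}\le k$ one has $E_\mu(u)\le \tfrac12\|u\|^2_{D_s}\le \tfrac k2$ (discarding the two negative terms, which are nonpositive since $\mu>0$). So choosing $k$ small enough that $\tfrac k2<m(a,\mu)$ — which is possible because $m(a,\mu)>0$ by Lemma \ref{Lem10} — yields $\sup_{\overline{A_k}}E_\mu<m(a,\mu)$. Finally, take $k=\min\{k_0,k_1,2m(a,\mu)-\varepsilon\}$ for small $\varepsilon$; on $\overline{A_k}$ all three conclusions hold, and the positivity $0<\sup_{\overline{A_k}}E_\mu$ follows by picking any nonzero $u\in\overline{A_k}$ and using the lower bound just established (or equivalently noting $\overline{A_k}\ni u$ with $E_\mu(u)>0$).

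I do not expect a genuine obstacle here: the argument is a routine "small-ball" estimate that decouples the quadratic term from the two higher-order terms, exactly as in the subcritical Lemma \ref{Lem2}/Corollary \ref{Lem6} machinery, the only inputs being \eqref{U3}, \eqref{int2}, the strict positivity of $m(a,\mu)$ from Lemma \ref{Lem10}, and the strict inequality \eqref{U20}. The one point requiring a little care is making a single choice of $k$ that simultaneously makes $E_\mu$ and $P_\mu$ positive on the closed ball \emph{and} keeps $\sup_{\overline{A_k}}E_\mu$ below the (already fixed) value $m(a,\mu)$; this is handled by taking the minimum of the finitely many thresholds produced above.
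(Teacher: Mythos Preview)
Your proposal is correct and follows essentially the same approach as the paper: both use the Gagliardo--Nirenberg--Sobolev inequality \eqref{U3} and the Sobolev inequality \eqref{int2} to obtain the lower bounds on $E_\mu$ and $P_\mu$ (with the positive leading coefficients guaranteed by \eqref{U20}), then use the trivial upper bound $E_\mu(u)\le \tfrac12\|u\|^2_{D_s}$ together with $m(a,\mu)>0$ from Lemma~\ref{Lem10} to push $\sup_{\overline{A_k}}E_\mu$ below $m(a,\mu)$. Your write-up is in fact slightly more careful than the paper's in explicitly organizing the single choice of $k$ and in noting that $\overline{A_k}$ contains elements with $E_\mu>0$, which gives the ``$0<$'' part of the conclusion.
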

\begin{proof}
By fractional Gagliardo-Nirenberg-Sobolev inequality \eqref{U3} and Sobolev inequality \eqref{int2}, we have
\begin{align*}
E_{\mu}(u)
\geq \left(\frac{1}{2}-\frac{\mu}{\overline{p}}C^{\overline{p}}_{N,\overline{p},s}a^{\frac{4s}{N}}\right)||u||^{2}_{D_{s}(\mathbb{R}^{N})}-\frac{1}{2_{s}^{\ast}}S^{-\frac{2_{s}^{\ast}}{2}}_{s}||u||^{2_{s}^{\ast}}_{D_{s}(\mathbb{R}^{N})}>0,
\end{align*}
and
\begin{align*}
P_{\mu}(u)&=s||u||^{2}_{D_{s}(\mathbb{R}^{N})}-s\mu\frac{2}{\overline{p}}\int_{\mathbb{R}^{N}} |u|^{\overline{p}}dx-s\int_{\mathbb{R}^{N}} |u|^{2_{s}^{\ast}}dx\\
& \geq s\left(1-\frac{2\mu}{\overline{p}}C^{\overline{p}}_{N,\overline{p},s}a^{\frac{4s}{N}}\right)||u||^{2}_{D_{s}(\mathbb{R}^{N})}-\frac{s}{2_{s}^{\ast}}S^{-\frac{2_{s}^{\ast}}{2}}_{s}||u||^{2_{s}^{\ast}}_{D_{s}(\mathbb{R}^{N})}>0,
\end{align*}
provided that $u\in \overline{A_{k}}$ for $k$ small enough.  By Lemma \ref{Lem11}, we know that $m(a,\mu)>0$, thus if necessary replacing $k$ with smaller quantity, we also have $$E_{\mu}(u)\leq \frac{1}{2}||u||^{2}_{D_{s}(\mathbb{R}^{N})}< m(a,\mu).$$
The proof is complete.
\end{proof}
In order to apply Proposition \ref{LM5} and recover compactness, we need an estimate from above on $m_{r}(a,\mu)=\inf_{\mathcal{P}_{a,\mu}\bigcap S^{r}_{a}}E_{\mu}$, where $S^{r}_{a}$ is the subset of the radial functions in $S_{a}.$

\begin{lemma}\label{Lem16}
Under  condition \eqref{U20}, we have $m_{r}(a,\mu)< \frac{s}{N}S^{\frac{N}{2s}}_{s}.$
\end{lemma}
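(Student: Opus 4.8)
The plan is to construct an explicit test family built from the Aubin--Talenti extremal $U_{\epsilon,0}$ (rescaled so it lies on $S_a$), compute the fiber-maximized energy along this family, and show that the leading-order critical term $\frac{s}{N}S_s^{N/2s}$ is strictly lowered by the subcritical perturbation $\mu|u|^{q-2}u$ for small enough $\epsilon$. Concretely, fix the Talenti bubble $U_\epsilon = U_{\epsilon,0}$ from \eqref{G1}, let $\eta\in C_c^\infty(\mathbb R^N)$ be a radial cut-off with $\eta\equiv1$ near the origin, and set $u_\epsilon = a\,\eta U_\epsilon/\|\eta U_\epsilon\|_{L^2}$ so that $u_\epsilon\in S_a^r$. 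Using the classical Br\'ezis--Nirenberg type estimates (adapted to the fractional setting as in \cite{SV1}), one has the asymptotics, as $\epsilon\to0^+$,
\begin{align*}
\|u_\epsilon\|^2_{D_s(\mathbb R^N)} &= S_s^{N/2s} + O(\epsilon^{N-2s}),\qquad
\int_{\mathbb R^N}|u_\epsilon|^{2^*_s}\,dx = S_s^{N/2s} + O(\epsilon^{N}),
\end{align*}
together with a lower bound $\int_{\mathbb R^N}|u_\epsilon|^q\,dx \gtrsim \epsilon^{\beta}$ for the appropriate exponent $\beta=\beta(N,s,q)$ coming from the tail of $U_\epsilon$ in $L^q$; since $q<\overline p=2+4s/N$ is $L^2$-subcritical, this power $\epsilon^\beta$ dominates the error terms $\epsilon^{N-2s}$ as $\epsilon\to0^+$ (this is exactly where the condition $q<\overline p$, equivalently $q\gamma_{q,s}<2$, enters).

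Next I would estimate $m_r(a,\mu)$ from above by $\sup_{t\in\mathbb R}\Psi^\mu_{u_\epsilon}(t) = \Psi^\mu_{u_\epsilon}(t_{u_\epsilon})$, using Lemma \ref{Lem8} which guarantees that $t_{u_\epsilon}\star u_\epsilon\in\mathcal P_{a,\mu}$ realizes the maximum of the fiber map and hence $m_r(a,\mu)\le \max_{t}E_\mu(t\star u_\epsilon)$. Write $A_\epsilon=\|u_\epsilon\|^2_{D_s}$, $B_\epsilon=\int|u_\epsilon|^q$, $C_\epsilon=\int|u_\epsilon|^{2^*_s}$, so that $\Psi^\mu_{u_\epsilon}(t)=\frac12 A_\epsilon e^{2st}-\frac{\mu}{q}B_\epsilon e^{q\gamma_{q,s}st}-\frac{1}{2^*_s}C_\epsilon e^{2^*_s st}$. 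Dropping the (negative) $B_\epsilon$ term gives $\Psi^\mu_{u_\epsilon}(t)\le \frac12 A_\epsilon e^{2st}-\frac{1}{2^*_s}C_\epsilon e^{2^*_s st}$, whose maximum over $t$ equals $\frac{s}{N}(A_\epsilon^{2^*_s}/C_\epsilon^2)^{1/(2^*_s-2) \cdot \text{(power)}}$; by the asymptotics this max is $\frac{s}{N}S_s^{N/2s}+O(\epsilon^{N-2s})$. To get the strict inequality I would instead expand $\Psi^\mu_{u_\epsilon}$ around its maximum point keeping the $B_\epsilon$ term: at the maximizer $t_\epsilon$ of the unperturbed map the perturbation contributes $-\frac{\mu}{q}B_\epsilon e^{q\gamma_{q,s}st_\epsilon}\le -c\,\mu\,\epsilon^\beta$ for a fixed $c>0$ (since $t_\epsilon$ stays bounded as $\epsilon\to0$), while the second-order error from moving off $t_\epsilon$ is of lower order. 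Hence
$$
m_r(a,\mu)\le \Psi^\mu_{u_\epsilon}(t_\epsilon)\le \frac{s}{N}S_s^{N/2s} + O(\epsilon^{N-2s}) - c\,\mu\,\epsilon^{\beta}<\frac{s}{N}S_s^{N/2s}
$$
for $\epsilon>0$ small, because $\beta<N-2s$ by the $L^2$-subcriticality of $q$.

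The main obstacle will be the bookkeeping of the fractional Talenti estimates: unlike the local case, $U_\epsilon$ and its cut-off require careful control of the Gagliardo seminorm of $\eta U_\epsilon$ and of the $L^q$ tail, and one must verify that the exponent $\beta$ produced by $\int|u_\epsilon|^q$ genuinely beats $N-2s$ across the whole range $2<q<\overline p$ and all admissible $N>2s$ — including the delicate small-dimension regimes $2s<N\le 4s$ where the remainder $\epsilon^{N-2s}$ is itself weak. A clean way to organize this is to first record the scaling $\int|u_\epsilon|^q\,dx = \epsilon^{N-\frac{N-2s}{2}q}\big(\int|U_1|^q + o(1)\big)$ when $U_1\in L^q$ (i.e. when $q>N/(N-2s)$, which always holds here since $q>2>N/(N-2s)$ is equivalent to $N>2s\cdot\frac{q}{q-1}$ — when this fails the cut-off introduces a logarithm or a different power, handled exactly as in \cite{SV1}), and then check $N-\frac{N-2s}{2}q < N-2s \iff q>2$, which is automatic. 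This confirms $\beta<N-2s$ and closes the argument; the remaining work is purely computational and follows the template of \cite{Brezis, SV1}.
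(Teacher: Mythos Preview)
You have misread the setting: Lemma~\ref{Lem16} sits in the $L^{2}$-\emph{critical} section, so $q=\overline{p}=2+4s/N$ and $q\gamma_{q,s}=2$, not $q<\overline{p}$ as you repeatedly assume. This is not a cosmetic slip; it changes the structure of the fiber map. When $q=\overline{p}$ the perturbation term $e^{q\gamma_{q,s}st}=e^{2st}$ carries the \emph{same} exponential weight as the gradient term, so condition \eqref{U20} is precisely what guarantees the leading coefficient $\tfrac{1}{2}\|u\|^{2}_{D_{s}}-\tfrac{\mu}{\overline{p}}\int|u|^{\overline{p}}$ stays positive and is what produces a lower bound on $e^{st_{v_\epsilon,\mu}}$. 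Your argument never invokes \eqref{U20} at all, which is already a sign that something has gone wrong.

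Two further concrete gaps stem from this. First, for the $L^{2}$-normalised test function $v_{\epsilon}=a\,\eta U_{\epsilon}/\|\eta U_{\epsilon}\|_{L^{2}}$ the maximiser $t_{\epsilon}$ of $\Psi^{0}_{v_{\epsilon}}$ does \emph{not} stay bounded: from \eqref{U21} and \eqref{int7} one sees $e^{st_{\epsilon}}\to 0$ as $\epsilon\to 0$ (roughly like $\epsilon^{s}$ when $N>4s$), so your claim ``$t_{\epsilon}$ stays bounded, hence the perturbation contributes $-c\mu\epsilon^{\beta}$'' collapses. Second, you conflate the normalised $v_{\epsilon}$ with the bare cut-off bubble $\eta U_{\epsilon}$: the asymptotics $\|u_\epsilon\|^{2}_{D_{s}}=S_{s}^{N/2s}+O(\epsilon^{N-2s})$ hold for the latter, not the former. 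The paper's proof handles both issues by first bounding $e^{st_{v_\epsilon,\mu}}$ from below via \eqref{U20}, and then recognising that the quantity to compare with $O(\epsilon^{N-2s})$ is the \emph{quotient} $\int|u_{\epsilon}|^{\overline{p}}\,dx\big/\|u_{\epsilon}\|^{4s/N}_{L^{2}}$ (not $\int|u_{\epsilon}|^{\overline{p}}$ alone), which then requires the case-split on $N$ versus $4s$ and $\tfrac{\overline{p}}{\overline{p}-1}2s$ recorded in \eqref{int7} and \eqref{int12}. Your final check ``$N-\tfrac{N-2s}{2}q<N-2s\Leftrightarrow q>2$'' is correct for the un-normalised $L^{q}$ integral but is simply not the relevant comparison here.
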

\begin{proof}
From \cite{CT}, we know that $S_{s}$\ is attained in\ $\mathbb{R}^{N}$ by
$$
U_0(x)=C(N,s)\left(\frac{1}{1+|x|^{2}}\right)^{\frac{N-2s}{2}}
$$ with $C(N,s)$ chosen so that

$$
\|U_0(x)\|^{2}_{D_{s}(\mathbb{R}^{N})}=\int_{\mathbb{R}^{N}}|U_0(x)|^{2_{s}^{\ast}}dx=S^{\frac{N}{2s}}_{s}.
$$

Take $\eta(x)\in C^{\infty}_{0}(\mathbb{R}^{N},[0,1])$ be a cut-off function such that $0\leq\eta\leq1,\eta=1\ \text{on}\ B(0,\delta)$ and $\eta=1 \text{ on }\ \mathbb{R}^{N}\setminus B(0,2\delta)$. Let

$$
u_{\epsilon}=\eta(x)U_{\epsilon}(x),\ \  \ v_{\epsilon}=a\frac{u_{\epsilon}}{\|u_{\epsilon}\|_{L^{2}(\mathbb{R}^{N})}}$$

and $$U_{\epsilon}(x)=\epsilon^{-\frac{N-2s}{2}}U_{0}(\frac{x}{\epsilon}).
$$
From Proposition 21 and Proposition 22 in \cite{SV1}, it is easy to deduce that the following estimates hold true
\begin{equation}\label{int6}
\|v_{\epsilon}\|^{2}_{D_{s}(\mathbb{R}^{N})}\leq \|U_{0}\|^{2}_{D_{s}(\mathbb{R}^{N})}+o(\epsilon^{N-2s}),
\end{equation}
\begin{equation}\label{int7}
\int_{\mathbb{R}^{N}}|v_{\epsilon}|^{2}dx=\left\{
\begin{array}{rcl}
C\epsilon^{2s}+o(\epsilon^{N-2s}),       &      & if\  N>4s,\\
C\epsilon^{2s}\log(\frac{1}{\epsilon})+o(\epsilon^{2s}),    &      & if\  N=4s,\\
 C\epsilon^{N-2s}+o(\epsilon^{2s}),    &      & if\  N<4s,
\end{array} \right.
\end{equation}
It is easy to see that
\begin{equation}\label{int9}
\int_{\mathbb{R}^{N}}|v_{\epsilon}|^{^{2_{s}^{\ast}}}dx= \int_{\mathbb{R}^{N}}|U_{0}|^{^{2_{s}^{\ast}}}dx+o(\epsilon^{N}).
\end{equation}
By the similar arguments as Proposition 22 in \cite{SV1}, we can deduce that
\begin{equation}\label{int12}
\int_{\mathbb{R}^{N}}|v_{\epsilon}|^{\overline{p}}dx=\left\{
\begin{array}{rcl}
&C\epsilon^{N-\frac{ N-2 s}{2}\overline{p}}+o(\epsilon^{\frac{N-2s}{2}\overline{p}}),             & if\ N>\frac{\overline{p}}{\overline{p}-1}2s,\\
&C\epsilon^{\frac{N}{2}}\log(\frac{1}{\epsilon})+o(\epsilon^{\frac{N}{2}}),          & if\  N=\frac{\overline{p}}{\overline{p}-1}2s,\\
&C\epsilon^{\frac{N-2s}{2}\overline{p}}+o(\epsilon^{N-\frac{ N-2 s}{2}\overline{p}}),          & if\  N<\frac{\overline{p}}{\overline{p}-1}2s.
\end{array} \right.
\end{equation}
It is easy to see that $u_{\epsilon}\in C^{\infty}_{0}(\mathbb{R}^{N},[0,1])$ and $v_{\epsilon}\in S_{a}^{r}.$ By Lemma \ref{Lem8}, we know that $$m_{r}(a,\mu)=\inf_{\mathcal{P}_{a,\mu}\bigcap S^{r}_{a}}E_{\mu}\leq E_{\mu}(t_{v_{\epsilon}}\star v_{\epsilon})=\max_{t\in \mathbb{R}}E_{\mu}(t\star v_{\epsilon}).$$
Next, we give a upper estimate of  $$E_{\mu}(t_{v_{\epsilon}}\star v_{\epsilon})=\max_{t\in \mathbb{R}}E_{\mu}(t\star v_{\epsilon}).$$

{\bf Step 1.} Consider the case $\mu=0$  and estimate $$\max_{t\in \mathbb{R}}\Psi^{0}_{v_{\epsilon}}(t)=E_{0}(t\star v_{\epsilon}).$$
Since \begin{equation*}
 \Psi^{0}_{v_{\epsilon}}(t)=\frac{e^{2st}}{2}||v_{\epsilon}||^{2}_{D_{s}(\mathbb{R}^{N})}-\frac{e^{2_{s}^{\ast}st}}{2_{s}^{\ast}}\int_{\mathbb{R}^{N}} |v_{\epsilon}|^{2_{s}^{\ast}}dx.
\end{equation*}
It is easy to see that for every $v_{\epsilon}\in S_{a}$ the function $\Psi^{0}_{v_{\epsilon}}(t)$ has a unique critical point $t_{v_{\epsilon},0}$, which is  a strict maximum point and is given by
\begin{align}\label{U21}
e^{st_{v_{\epsilon},0}}=\left(\frac{||v_{\epsilon}||^{2}_{D_{s}(\mathbb{R}^{N})}}{\int_{\mathbb{R}^{N}} |v_{\epsilon}|^{2_{s}^{\ast}}dx}\right)^{\frac{1}{2_{s}^{\ast}-2}}.
\end{align}
Thus, from  the estimates \eqref{int6}--\eqref{int9}, we have
\begin{align*}
 \max_{t\in \mathbb{R}}E_{0}(t\star v_{\epsilon})&=E_{0}(t_{v_{\epsilon},0}\star v_{\epsilon})=\Psi^{0}_{v_{\epsilon}}(t_{v_{\epsilon},0})\\
 &= \left[\frac{1}{2}\left(\frac{||v_{\epsilon}||^{2}_{D_{s}(\mathbb{R}^{N})}}{\int_{\mathbb{R}^{N}} |v_{\epsilon}|^{2_{s}^{\ast}}dx}\right)^{\frac{2}{2_{s}^{\ast}-2}}||v_{\epsilon}||^{2}_{D_{s}(\mathbb{R}^{N})}-\frac{1}{2_{s}^{\ast}}\left(\frac{||v_{\epsilon}||^{2}_{D_{s}(\mathbb{R}^{N})}}{\int_{\mathbb{R}^{N}} |v_{\epsilon}|^{2_{s}^{\ast}}dx}\right)^{\frac{2_{s}^{\ast}}{2_{s}^{\ast}-2}}\int_{\mathbb{R}^{N}} |v_{\epsilon}|^{2_{s}^{\ast}}dx\right]\\
 &= \frac{s}{N}\left(\frac{||v_{\epsilon}||^{2}_{D_{s}(\mathbb{R}^{N})}}{\left(\int_{\mathbb{R}^{N}} |v_{\epsilon}|^{2_{s}^{\ast}}dx\right)^{\frac{2}{2_{s}^{\ast}}}}\right)^{\frac{2_{s}^{\ast}}{2_{s}^{\ast}-2}}
 =\frac{s}{N}\left[\frac{S^{\frac{N}{2s}}_{s}+O(\epsilon^{N-2s})}{\left(S^{\frac{N}{2s}}_{s}+O(\epsilon^{N})\right)^{\frac{2}{2_{s}^{\ast}}}}\right]^{\frac{N}{2s}}=\frac{s}{N}S^{\frac{N}{2s}}_{s}+O(\epsilon^{N-2s})
 \end{align*}
 {\bf Step 2.} Estimate on $t_{v_{\epsilon}, \mu}$.  Since $$\Psi^{\mu}_{v_{\epsilon}}(t)=E_{\mu}(t\star v_{\epsilon})=\frac{e^{2st}}{2}||v_{\epsilon}||^{2}_{D_{s}(\mathbb{R}^{N})}-\mu\frac{e^{2st}}{\overline{p}}\int_{\mathbb{R}^{N}} |v_{\epsilon}|^{\overline{p}}dx-\frac{e^{2_{s}^{\ast}st}}{2_{s}^{\ast}}\int_{\mathbb{R}^{N}} |v_{\epsilon}|^{2_{s}^{\ast}}dx.$$ Let $t_{v_{\epsilon},\mu}$ be the unique maximum point of $\Psi^{\mu}_{v_{\epsilon}}(t)$, then by $(\Psi^{\mu}_{v_{\epsilon}})'(t)=P_{\mu}(t_{v_{\epsilon},\mu}\star v_{\epsilon})=0$ and fractional Gagliardo-Nirenberg-Sobolev inequality \eqref{U3}, we have
 $$e^{(2_{s}^{\ast}-2)st}=\frac{||v_{\epsilon}||^{2}_{D_{s}(\mathbb{R}^{N})}}{\int_{\mathbb{R}^{N}} |v_{\epsilon}|^{2_{s}^{\ast}}dx}-\frac{2\mu}{\overline{p}}\frac{\int_{\mathbb{R}^{N}} |v_{\epsilon}|^{\overline{p}}dx}{\int_{\mathbb{R}^{N}} |v_{\epsilon}|^{2_{s}^{\ast}}dx}\geq \left(1-\frac{2\mu}{\overline{p}}C^{\overline{p}}_{N,\overline{p},s}a^{\frac{4s}{N}}\right)\frac{||v_{\epsilon}||^{2}_{D_{s}(\mathbb{R}^{N})}}{\int_{\mathbb{R}^{N}} |v_{\epsilon}|^{2_{s}^{\ast}}dx}.$$
 {\bf Step 3.} Estimate on $\max_{t\in \mathbb{R}}\Psi^{\mu}_{v_{\epsilon}}(t)$. Since
 \begin{align*}
 &\max_{t\in \mathbb{R}}\Psi^{\mu}_{v_{\epsilon}}(t)=\Psi^{\mu}_{v_{\epsilon}}(t_{v_{\epsilon},\mu})=\Psi^{0}_{v_{\epsilon}}(t_{v_{\epsilon},\mu})-\mu\frac{e^{2st_{v_{\epsilon},\mu}}}{\overline{p}}\int_{\mathbb{R}^{N}} |v_{\epsilon}|^{\overline{p}}dx\\
 &\leq \sup_{\mathbb{R}}\Psi^{0}_{v_{\epsilon}}-\frac{\mu}{\overline{p}}\left(1-\frac{2\mu}{\overline{p}}C^{\overline{p}}_{N,\overline{p},s}a^{\frac{4s}{N}}\right)^{\frac{2}{2_{s}^{\ast}-2}}\left(\frac{||v_{\epsilon}||^{2}_{D_{s}(\mathbb{R}^{N})}}{\int_{\mathbb{R}^{N}} |v_{\epsilon}|^{2_{s}^{\ast}}dx}\right)^{\frac{2}{2_{s}^{\ast}-2}}\int_{\mathbb{R}^{N}} |v_{\epsilon}|^{\overline{p}}dx\\
 &\leq \frac{s}{N}S^{\frac{N}{2s}}_{s}+O(\epsilon^{N-2s})-\frac{\mu}{\overline{p}}\left(1-\frac{2\mu}{\overline{p}}C^{\overline{p}}_{N,\overline{p},s}a^{\frac{4s}{N}}\right)^{\frac{2}{2_{s}^{\ast}-2}}\frac{a^{\frac{4s}{N}}}{\|u_{\epsilon}\|^{\frac{4s}{N}}_{L^{2}(\mathbb{R}^{N})}}\frac{||u_{\epsilon}||^{\frac{4}{2_{s}^{\ast}-2}}_{D_{s}(\mathbb{R}^{N})}\int_{\mathbb{R}^{N}} |u_{\epsilon}|^{\overline{p}}dx}{\left(\int_{\mathbb{R}^{N}} |u_{\epsilon}|^{2_{s}^{\ast}}dx\right)^{\frac{2}{2_{s}^{\ast}-2}}}\\
 &\leq \frac{s}{N}S^{\frac{N}{2s}}_{s}+O(\epsilon^{N-2s})-C_{N,a,\mu} \frac{\int_{\mathbb{R}^{N}} |u_{\epsilon}|^{\overline{p}}dx}{\|u_{\epsilon}\|^{\frac{4s}{N}}_{L^{2}(\mathbb{R}^{N})}}
 \end{align*}
From \eqref{int7} and \eqref{int12}, we have the following estimate:
 \begin{align*}
&\frac{\int_{\mathbb{R}^{N}} |u_{\epsilon}|^{\overline{p}}dx}{\|u_{\epsilon}\|^{\frac{4s}{N}}_{L^{2}(\mathbb{R}^{N})}}
 = \left\{
\begin{aligned}
&C\epsilon^{N-\frac{N-2s}{2}\overline{p}-\frac{4s^{2}}{N}}=C,  &\text{if}\ &N>4s, \\
&C\epsilon^{4s-s\overline{p}-s}|\ln\epsilon|^{-\frac{1}{2}}=C|\ln \epsilon|^{-\frac{1}{2}}, &\text{if}\ &N=4s, \\
&C\epsilon^{N-\frac{N-2s}{2}\overline{p}-\frac{N-2s}{2}\frac{4s}{N}}=C\epsilon^{\frac{2s(4s-N)}{N}}, &\text{if} \  &\frac{\overline{p}}{\overline{p}-1}2s<N<4s,\\
&C\epsilon^{\frac{N}{2}-\frac{N-2s}{2}\frac{4s}{N}}|\ln \epsilon| , &\text{if} \   &N=\frac{\overline{p}}{\overline{p}-1}2s,\\
&C\epsilon^{\frac{N-2s}{2}\overline{p}-\frac{N-2s}{2}\frac{4s}{N}}=C\epsilon^{N-2s}, &\text{if} \   &2s<N<\frac{\overline{p}}{\overline{p}-1}2s.
\end{aligned}
\right.
\end{align*}
Thus, $$\max_{t\in \mathbb{R}}\Psi^{\mu}_{v_{\epsilon}}(t)\leq \frac{s}{N}S^{\frac{N}{2s}}_{s}.$$
The proof is thus finished.
\end{proof}
\section{$L^{2}$-supercritical perturbation}\label{sec4}
In this section, we consider $N>2s$  and $\overline{p}<q<2_{s}^{\ast}$. We recall the decomposition of $$\mathcal{P}_{a,\mu}=\mathcal{P}^{+}_{a,\mu}\cup \mathcal{P}^{0}_{a,\mu}\cup \mathcal{P}^{-}_{a,\mu}.$$
\begin{lemma}\label{Lem12}
$\mathcal{P}^{0}_{a,\mu}=\emptyset$ and $\mathcal{P}_{a,\mu}$ is a smooth manifold of codimension 1 in $S_{a}$.
\end{lemma}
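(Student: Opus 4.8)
The plan is to follow the same scheme as in the proof of Lemma \ref{Lem3}, the difference being that in the $L^2$-supercritical regime the sign condition $q\gamma_{q,s}>2$ recorded in \eqref{int45} makes the emptiness of $\mathcal{P}^{0}_{a,\mu}$ hold \emph{unconditionally}, i.e. without any smallness assumption on $\mu a^{q(1-\gamma_{q,s})}$.

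First I would show $\mathcal{P}^{0}_{a,\mu}=\emptyset$. Arguing by contradiction, suppose $u\in\mathcal{P}^{0}_{a,\mu}$. Then $P_{\mu}(u)=0$ gives
$$\|u\|^{2}_{D_{s}(\mathbb{R}^{N})}=\mu\gamma_{q,s}\int_{\mathbb{R}^{N}}|u|^{q}dx+\int_{\mathbb{R}^{N}}|u|^{2_{s}^{\ast}}dx,$$
while $(\Psi^{\mu}_{u})''(0)=0$ gives
$$2\|u\|^{2}_{D_{s}(\mathbb{R}^{N})}=\mu q\gamma^{2}_{q,s}\int_{\mathbb{R}^{N}}|u|^{q}dx+2_{s}^{\ast}\int_{\mathbb{R}^{N}}|u|^{2_{s}^{\ast}}dx.$$
Multiplying the first identity by $2$ and subtracting it from the second yields
$$0=\mu\gamma_{q,s}(q\gamma_{q,s}-2)\int_{\mathbb{R}^{N}}|u|^{q}dx+(2_{s}^{\ast}-2)\int_{\mathbb{R}^{N}}|u|^{2_{s}^{\ast}}dx.$$
Since $\overline{p}<q<2_{s}^{\ast}$, by \eqref{int45} we have $q\gamma_{q,s}>2$, and $2_{s}^{\ast}>2$; moreover $\mu,\gamma_{q,s}>0$ and both integrals are strictly positive because $u\in S_{a}$ is nontrivial. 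Hence the right-hand side is strictly positive, a contradiction. Therefore $\mathcal{P}^{0}_{a,\mu}=\emptyset$.

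Next, to prove that $\mathcal{P}_{a,\mu}$ is a smooth manifold of codimension $1$ in $S_{a}$ I would argue exactly as in Lemma \ref{Lem3}. Write $\mathcal{P}_{a,\mu}$ as the set of $u\in H^{s}(\mathbb{R}^{N})$ satisfying $P_{\mu}(u)=0$ and $G(u)=\int_{\mathbb{R}^{N}}|u|^{2}dx-a^{2}=0$; both $P_{\mu}$ and $G$ are of class $C^{1}$, so it suffices to check that $d(P_{\mu},G)(u)\colon H^{s}(\mathbb{R}^{N})\to\mathbb{R}^{2}$ is surjective for every $u\in\mathcal{P}_{a,\mu}$. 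If it failed, $dP_{\mu}(u)$ would be linearly dependent on $dG(u)$, so there would exist $\nu\in\mathbb{R}$ with
$$2s(-\Delta)^{s}u=\nu u+\mu q\gamma_{q,s}s\,|u|^{q-2}u+2_{s}^{\ast}s\,|u|^{2_{s}^{\ast}-2}u\quad\text{in }\mathbb{R}^{N}.$$
Applying Proposition \ref{pro} to this equation gives $2s^{2}\|u\|^{2}_{D_{s}(\mathbb{R}^{N})}=\mu q\gamma^{2}_{q,s}s^{2}\int_{\mathbb{R}^{N}}|u|^{q}dx+2_{s}^{\ast}s^{2}\int_{\mathbb{R}^{N}}|u|^{2_{s}^{\ast}}dx$, that is $u\in\mathcal{P}^{0}_{a,\mu}$, contradicting the first part. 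Hence $d(P_{\mu},G)(u)$ is surjective and $\mathcal{P}_{a,\mu}$ is a natural constraint, a smooth manifold of codimension $1$ in $S_{a}$.

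I do not expect a serious obstacle here: the argument is a direct adaptation of Lemma \ref{Lem3}, and the computations are routine. The only point worth emphasizing is that in the $L^{2}$-supercritical case the coefficient $q\gamma_{q,s}-2$ is positive, so the above linear combination of the two Pohozaev-type identities is unconditionally positive; this is precisely why Theorem \ref{TH3} needs no upper bound on $\mu a^{q(1-\gamma_{q,s})}$, in contrast with Theorems \ref{TH1}--\ref{TH2}.
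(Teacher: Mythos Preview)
Your proof is correct and follows essentially the same approach as the paper: the paper derives the identity $(2-q\gamma_{q,s})\mu\gamma_{q,s}\int_{\mathbb{R}^{N}}|u|^{q}dx=(2_{s}^{\ast}-2)\int_{\mathbb{R}^{N}}|u|^{2_{s}^{\ast}}dx$ and observes that the left side is nonpositive while the right side is nonnegative, forcing $u=0$, which is the same contradiction you obtain by writing the combination as a sum of two strictly positive terms. For the manifold part the paper likewise refers back to the argument of Lemma~\ref{Lem3}, exactly as you do.
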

\begin{proof}
Assume by contradiction that  there exists a $u\in \mathcal{P}^{0}_{a,\mu}$, then
\begin{align}\label{U22}
||u||^{2}_{D_{s}(\mathbb{R}^{N})}-\mu\gamma_{q,s}\int_{\mathbb{R}^{N}} |u|^{q}dx-\int_{\mathbb{R}^{N}} |u|^{2_{s}^{\ast}}dx=0,
\end{align}
and
\begin{align}\label{U23}
2||u||^{2}_{D_{s}(\mathbb{R}^{N})} =\mu q\gamma^{2}_{q,s}\int_{\mathbb{R}^{N}} |u|^{q}dx+2_{s}^{\ast}\int_{\mathbb{R}^{N}} |u|^{2_{s}^{\ast}}dx.
\end{align}
Thus, from \eqref{U22} and \eqref{U23}, we have $$(2-q\gamma_{q,s})\mu \gamma_{q,s}\int_{\mathbb{R}^{N}} |u|^{q}dx=(2_{s}^{\ast}-2)\int_{\mathbb{R}^{N}} |u|^{2_{s}^{\ast}}dx=0.$$
Since $2-q\gamma_{q,s}<0, 2_{s}^{\ast}-2>0$, we have $u=0,$ which  is not possible, thanks to $u\in S_{a}.$ The rest of the proof is similar to the one of Lemma \ref{Lem3}, so we omit the details here.
\end{proof}
\begin{lemma}\label{Lem13}
For every $u\in S_{a}$, there is a unique $t_{u}\in \mathbb{R}$ such that $t_{u}\star u \in \mathcal{P}_{a,\mu},$  where $t_{u}$ is the unique critical point of the function of $\Psi^{\mu}_{u}$ and is a strict maximum point at positive level, moreover,
\begin{enumerate}
 \item[$(1)$] $\mathcal{P}_{a,\mu}=\mathcal{P}^{-}_{a,\mu}$.

 \item[$(2)$] $\Psi^{\mu}_{u}(t)$ is strictly decreasing and concave on $(t_{u}, +\infty)$ and $t_{u}<0$ implies that $P_{\mu}(u)<0.$

 \item[$(3)$] The map $u\in S_{a}: t_{u}\in \mathbb{R}$ os of class $C^{1}$.

 \item[$(4)$] If $P_{\mu}(u)<0$, then $t_{u}<0$.
\end{enumerate}
\end{lemma}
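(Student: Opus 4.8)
The structure here is identical to that of Lemma \ref{Lem8}, with the only difference being that the perturbation $\mu|u|^{q-2}u$ is $L^2$-supercritical, i.e. $q\gamma_{q,s}>2$, rather than $L^2$-critical. The plan is to study the fiber map
$$
\Psi^{\mu}_{u}(t)=E_{\mu}(t\star u)=\frac{e^{2st}}{2}\|u\|^{2}_{D_{s}(\mathbb{R}^{N})}-\mu\frac{e^{q\gamma_{q,s}st}}{q}\int_{\mathbb{R}^{N}}|u|^{q}dx-\frac{e^{2_{s}^{\ast}st}}{2_{s}^{\ast}}\int_{\mathbb{R}^{N}}|u|^{2_{s}^{\ast}}dx,
$$
and to exploit the ordering of exponents $2<q\gamma_{q,s}<2_{s}^{\ast}$ (the first inequality comes from $\overline{p}<q$ via \eqref{int45}, the second from $\gamma_{q,s}<1$). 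First I would observe that $\Psi^{\mu}_{u}(-\infty)=0^{-}$: indeed, as $t\to-\infty$ the leading term is $\frac{1}{2}e^{2st}\|u\|^{2}_{D_{s}(\mathbb{R}^{N})}>0$ since $2$ is the smallest exponent. Next, $\Psi^{\mu}_{u}(+\infty)=-\infty$ since $2_{s}^{\ast}$ is the largest exponent and its coefficient is negative. Hence $\Psi^{\mu}_{u}$ has at least one maximum point at positive level.

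The uniqueness of the critical point is the key computational step. Setting $(\Psi^{\mu}_{u})'(t)=0$ and dividing by $se^{2st}>0$, one is led to the equation
$$
\|u\|^{2}_{D_{s}(\mathbb{R}^{N})}=\mu\gamma_{q,s}e^{(q\gamma_{q,s}-2)st}\int_{\mathbb{R}^{N}}|u|^{q}dx+e^{(2_{s}^{\ast}-2)st}\int_{\mathbb{R}^{N}}|u|^{2_{s}^{\ast}}dx.
$$
Since $q\gamma_{q,s}-2>0$ and $2_{s}^{\ast}-2>0$, the right-hand side is a strictly increasing function of $t$ going from $0$ to $+\infty$, so this equation has exactly one solution $t_u$. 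Therefore $\Psi^{\mu}_{u}$ has a unique critical point, which must be the global maximum point found above, and it lies at positive level. This immediately gives that $t_{u}\star u\in\mathcal{P}_{a,\mu}$ is the unique element of the fiber lying on the Pohozaev set.

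For the remaining assertions: (1) follows because at a strict maximum point $(\Psi^{\mu}_{u})''(t_u)\le 0$, hence $t_u\star u\in\mathcal{P}^{-}_{a,\mu}\cup\mathcal{P}^{0}_{a,\mu}$, and since $\mathcal{P}^{0}_{a,\mu}=\emptyset$ by Lemma \ref{Lem12}, in fact $t_u\star u\in\mathcal{P}^{-}_{a,\mu}$; since $\mathcal{P}_{a,\mu}$ consists precisely of the points $t_u\star u$ over $u\in S_a$, we get $\mathcal{P}_{a,\mu}=\mathcal{P}^{-}_{a,\mu}$. For (2), since $t_u$ is the only critical point and a maximum, $(\Psi^{\mu}_{u})'(t)<0$ for $t>t_u$, giving strict decrease; concavity on $(t_u,+\infty)$ follows because $(\Psi^{\mu}_{u})''$ can change sign at most once after $t_u$ (the same monotonicity argument applied to $(\Psi^{\mu}_{u})''$ written as a sum of exponentials), and since it is already $\le 0$ at $t_u$ it stays negative; and if $t_u<0$ then $P_{\mu}(u)=(\Psi^{\mu}_{u})'(0)<0$ since $0>t_u$. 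Assertion (4) is the converse: if $P_{\mu}(u)=(\Psi^{\mu}_{u})'(0)<0$, then since $(\Psi^{\mu}_{u})'$ is positive on $(-\infty,t_u)$ and negative on $(t_u,+\infty)$, the sign at $0$ forces $0>t_u$. Finally (3), the $C^1$ dependence $u\mapsto t_u$, follows from the implicit function theorem applied to $\Phi(t,u)=(\Psi^{\mu}_{u})'(t)$ at $(t_u,u)$, using $\partial_t\Phi(t_u,u)=(\Psi^{\mu}_{u})''(t_u)<0$ (strict, again because $\mathcal{P}^{0}_{a,\mu}=\emptyset$), exactly as in Lemma \ref{Lem4}. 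The main obstacle is really just the bookkeeping of exponents in the uniqueness-of-critical-point argument; once the ordering $2<q\gamma_{q,s}<2_{s}^{\ast}$ is in hand, everything else is a routine adaptation of Lemma \ref{Lem8}.
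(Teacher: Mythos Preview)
Your argument is correct and follows essentially the same route as the paper: both rewrite $(\Psi^{\mu}_{u})'(t)=0$ as $\|u\|^{2}_{D_{s}(\mathbb{R}^{N})}=\mu\gamma_{q,s}e^{(q\gamma_{q,s}-2)st}\int|u|^{q}+e^{(2_{s}^{\ast}-2)st}\int|u|^{2_{s}^{\ast}}$, observe that the right-hand side is strictly increasing from $0$ to $+\infty$ (since both exponents are positive in the supercritical regime), deduce a unique critical point which must be the strict maximum, and then derive (1)--(4) exactly as you do. One typographical slip: you write $\Psi^{\mu}_{u}(-\infty)=0^{-}$ but your own reasoning (the leading term $\tfrac{1}{2}e^{2st}\|u\|^{2}_{D_{s}(\mathbb{R}^{N})}>0$ dominates) shows it is $0^{+}$, consistent with the existence of a maximum at positive level; the paper states $0^{+}$ as well.
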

\begin{proof}
Since
\begin{equation*}
 \Psi^{\mu}_{u}(t)=E_{\mu}(t\star u)=\frac{e^{2st}}{2}||u||^{2}_{D_{s}(\mathbb{R}^{N})}-\mu\frac{e^{q\gamma_{q,s}st}}{q}\int_{\mathbb{R}^{N}} |u|^{q}dx-\frac{e^{2_{s}^{\ast}st}}{2_{s}^{\ast}}\int_{\mathbb{R}^{N}} |u|^{2_{s}^{\ast}}dx,
\end{equation*}
  and $$(\Psi^{\mu}_{u})'(t)=se^{2st}||u||^{2}_{D_{s}(\mathbb{R}^{N})}-\mu\gamma_{q,s}se^{q\gamma_{q,s}st}\int_{\mathbb{R}^{N}} |u|^{q}dx-se^{2_{s}^{\ast}st} \int_{\mathbb{R}^{N}} |u|^{2_{s}^{\ast}}dx,$$ it follows that $(\Psi^{\mu}_{u})'(t)=0$ if and only if $$||u||^{2}_{D_{s}(\mathbb{R}^{N})}= f(t):=\mu\gamma_{q,s}e^{(q\gamma_{q,s}-2)st}\int_{\mathbb{R}^{N}} |u|^{q}dx+e^{(2_{s}^{\ast}-2)st} \int_{\mathbb{R}^{N}} |u|^{2_{s}^{\ast}}dx.$$
It is easy to see that $f(t)$ is positive , continuous, monotone increasing and $f(t)\rightarrow 0^{+}$ as $t\rightarrow -\infty$ and $f(t)\rightarrow +\infty$ as $t\rightarrow +\infty$. Thus, there exists a unique point $t_{u,s}$ such that $f(t)=||u||^{2}_{D_{s}(\mathbb{R}^{N})}$. Since $\Psi^{\mu}_{u}\rightarrow 0^{+}$ as $s\rightarrow -\infty$ and  $\Psi^{\mu}_{u}\rightarrow -\infty$ as $s\rightarrow +\infty$, we know that there is a unique $t_{u}\in \mathbb{R}$ such that $t_{u}\star u \in \mathcal{P}_{a,\mu},$  where $t_{u}$ is the unique critical point of the function of $\Psi^{\mu}_{u}$ and is a strict maximum point at positive level. Since $t_{u}$ is a strict maximum point, we know that $(\Psi^{\mu}_{u})''(t_{u})\leq 0.$  Because $\mathcal{P}^{0}_{a,\mu}=\emptyset,$ we have $(\Psi^{\mu}_{u})''(t_{u})\neq 0,$ which implies that $t_{u}\star u \in \mathcal{P}^{-}_{a,\mu}$, since $\Psi^{\mu}_{u}(t)$ has exactly one maximum point, so $\mathcal{P}_{a,\mu}=\mathcal{P}^{-}_{a,\mu}.$ To prove that the map $u\in S_{a}: t_{u}\in \mathbb{R}$ os of class $C^{1}$, we can apply the implicit function theorem as Lemma \ref{Lem4}. Finally, since $(\Psi^{\mu}_{u})'(t)<0$ if and only if $t>t_{u}$, so $P_{\mu}(u)=(\Psi^{\mu}_{u})'(0)<0$ if and only if $t_{u}<0$.
\end{proof}
\begin{lemma}\label{Lem14} There holds
$$m(a,\mu)=\inf_{\mathcal{P}_{a,\mu}}E_{\mu}>0.$$
\end{lemma}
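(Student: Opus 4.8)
\emph{Proof proposal.} The plan is to argue directly on the Pohozaev manifold, exploiting the two strict inequalities $2<q\gamma_{q,s}<2_{s}^{\ast}$ that hold precisely in the $L^{2}$-supercritical regime $\overline{p}<q<2_{s}^{\ast}$: the first is \eqref{int45}, and the second holds because $q\mapsto q\gamma_{q,s}=\tfrac{N(q-2)}{2s}$ is increasing in $q$ and equals $2_{s}^{\ast}$ at $q=2_{s}^{\ast}$. By Lemma \ref{Lem13}, for every $u\in S_{a}$ there is $t_{u}\in\mathbb{R}$ with $t_{u}\star u\in\mathcal{P}_{a,\mu}$, so $\mathcal{P}_{a,\mu}\neq\emptyset$ and $\inf_{\mathcal{P}_{a,\mu}}E_{\mu}$ is meaningful; the whole content is the strict positivity.

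The first step is a uniform lower bound on $\|u\|_{D_{s}(\mathbb{R}^{N})}$ over $\mathcal{P}_{a,\mu}$. If $u\in\mathcal{P}_{a,\mu}$ then $P_{\mu}(u)=0$, i.e.
$$\|u\|^{2}_{D_{s}(\mathbb{R}^{N})}=\mu\gamma_{q,s}\int_{\mathbb{R}^{N}}|u|^{q}\,dx+\int_{\mathbb{R}^{N}}|u|^{2_{s}^{\ast}}\,dx.$$
Bounding $\int_{\mathbb{R}^{N}}|u|^{q}\,dx$ by the fractional Gagliardo--Nirenberg--Sobolev inequality \eqref{U3} (with $\|u\|_{L^{2}}=a$) and $\int_{\mathbb{R}^{N}}|u|^{2_{s}^{\ast}}\,dx$ by the Sobolev inequality \eqref{int2}, then dividing by $\|u\|^{2}_{D_{s}(\mathbb{R}^{N})}>0$ (positive since $u\in S_{a}$ forces $u\not\equiv0$), I obtain
$$1\le \mu\gamma_{q,s}C^{q}_{N,q,s}a^{q(1-\gamma_{q,s})}\|u\|^{q\gamma_{q,s}-2}_{D_{s}(\mathbb{R}^{N})}+S_{s}^{-2_{s}^{\ast}/2}\|u\|^{2_{s}^{\ast}-2}_{D_{s}(\mathbb{R}^{N})}.$$
Both exponents $q\gamma_{q,s}-2$ and $2_{s}^{\ast}-2$ are positive, so the right-hand side tends to $0$ as $\|u\|_{D_{s}(\mathbb{R}^{N})}\to0$; hence there is $\delta=\delta(N,s,q,a,\mu)>0$ with $\|u\|^{2}_{D_{s}(\mathbb{R}^{N})}\ge\delta$ for every $u\in\mathcal{P}_{a,\mu}$.

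The second step is to rewrite $E_{\mu}$ on $\mathcal{P}_{a,\mu}$ as a sum of manifestly positive terms. Eliminating $\int_{\mathbb{R}^{N}}|u|^{q}\,dx$ through $\mu\gamma_{q,s}\int_{\mathbb{R}^{N}}|u|^{q}\,dx=\|u\|^{2}_{D_{s}(\mathbb{R}^{N})}-\int_{\mathbb{R}^{N}}|u|^{2_{s}^{\ast}}\,dx$ in the expression \eqref{eq1.2} of $E_{\mu}$ gives
$$E_{\mu}(u)=\left(\frac12-\frac1{q\gamma_{q,s}}\right)\|u\|^{2}_{D_{s}(\mathbb{R}^{N})}+\left(\frac1{q\gamma_{q,s}}-\frac1{2_{s}^{\ast}}\right)\int_{\mathbb{R}^{N}}|u|^{2_{s}^{\ast}}\,dx.$$
Here $\tfrac12-\tfrac1{q\gamma_{q,s}}>0$ because $q\gamma_{q,s}>2$, and $\tfrac1{q\gamma_{q,s}}-\tfrac1{2_{s}^{\ast}}>0$ because $q\gamma_{q,s}<2_{s}^{\ast}$, so combining with the first step,
$$E_{\mu}(u)\ge\left(\frac12-\frac1{q\gamma_{q,s}}\right)\|u\|^{2}_{D_{s}(\mathbb{R}^{N})}\ge\left(\frac12-\frac1{q\gamma_{q,s}}\right)\delta>0\qquad\text{for all }u\in\mathcal{P}_{a,\mu},$$
whence $m(a,\mu)=\inf_{\mathcal{P}_{a,\mu}}E_{\mu}\ge\left(\tfrac12-\tfrac1{q\gamma_{q,s}}\right)\delta>0$.

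I do not anticipate a genuine obstacle: this is the fractional counterpart of the critical case (Lemma \ref{Lem10}), the only subtlety being that both exponent inequalities are used simultaneously — $q\gamma_{q,s}>2$ to bound $\|u\|_{D_{s}(\mathbb{R}^{N})}$ away from $0$ and to keep the kinetic coefficient positive, $q\gamma_{q,s}<2_{s}^{\ast}$ to keep the $L^{2_{s}^{\ast}}$ coefficient positive. Consistently with the statement, no smallness assumption on $\mu a^{q(1-\gamma_{q,s})}$ is needed in this regime.
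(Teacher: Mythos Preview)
Your proof is correct and follows essentially the same approach as the paper: use $P_{\mu}(u)=0$ together with the Gagliardo--Nirenberg--Sobolev and Sobolev inequalities to obtain a uniform positive lower bound on $\|u\|_{D_{s}(\mathbb{R}^{N})}$ over $\mathcal{P}_{a,\mu}$, then rewrite $E_{\mu}$ on $\mathcal{P}_{a,\mu}$ as a positive combination. The only cosmetic difference is that the paper eliminates $\|u\|^{2}_{D_{s}(\mathbb{R}^{N})}$ to write $E_{\mu}(u)=\tfrac{\mu}{q}\bigl(\tfrac{q\gamma_{q,s}}{2}-1\bigr)\int|u|^{q}+\tfrac{s}{N}\int|u|^{2_{s}^{\ast}}$, whereas you eliminate $\int|u|^{q}$; your choice makes the final step slightly more direct since the lower bound from Step~1 is precisely on $\|u\|^{2}_{D_{s}(\mathbb{R}^{N})}$.
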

\begin{proof}
If $u\in \mathcal{P}_{a,\mu},$ then $P_{\mu}(u)=0$, then by fractional Gagliardo-Nirenberg-Sobolev inequality \eqref{U3} and Sobolev inequality \eqref{int2}, we have
\begin{align*}
||u||^{2}_{D_{s}(\mathbb{R}^{N})}&=\mu\gamma_{q,s}\int_{\mathbb{R}^{N}} |u|^{q}dx+\int_{\mathbb{R}^{N}} |u|^{2_{s}^{\ast}}dx\\
&\leq \mu\gamma_{q,s}C^{q}_{N,q,s}a^{(1-\gamma_{q,s})q}||u||^{q\gamma_{q,s}}_{D_{s}(\mathbb{R}^{N})}+S^{-\frac{2_{s}^{\ast}}{2}}_{s}||u||^{2_{s}^{\ast}}_{D_{s}(\mathbb{R}^{N})}
\end{align*}
Thus, from above inequality and $||u||^{2}_{D_{s}(\mathbb{R}^{N})}\neq 0$ (since $u\in S_{a}$), we have
\begin{align*}
\mu\gamma_{q,s}C^{q}_{N,q,s}a^{(1-\gamma_{q,s})q}||u||^{q\gamma_{q,s}-2}_{D_{s}(\mathbb{R}^{N})}+S^{-\frac{2_{s}^{\ast}}{2}}_{s}||u||^{2_{s}^{\ast}-2}_{D_{s}(\mathbb{R}^{N})}\geq 1, \ \forall \ u\in \mathcal{P}_{a,\mu},
\end{align*}
which  implies that $\inf_{u\in \mathcal{P}_{a,\mu}}||u||_{D_{s}(\mathbb{R}^{N})}>0$. Since $$\mu\gamma_{q,s}\int_{\mathbb{R}^{N}} |u|^{q}dx+\int_{\mathbb{R}^{N}} |u|^{2_{s}^{\ast}}dx=||u||^{2}_{D_{s}(\mathbb{R}^{N})},$$ we have $$\inf_{u\in \mathcal{P}_{a,\mu}}\left[\mu\gamma_{q,s}\int_{\mathbb{R}^{N}} |u|^{q}dx+\int_{\mathbb{R}^{N}} |u|^{2_{s}^{\ast}}dx\right]>0.$$
Thus, from $P_{\mu}(u)=0$ and above inequality, we have
\begin{align*}
\inf_{u\in \mathcal{P}_{a,\mu}}E_{\mu}(u)&=\inf_{u\in \mathcal{P}_{a,\mu}}\left[\frac{1}{2}||u||^{2}_{D_{s}(\mathbb{R}^{N})}
-\frac{\mu}{q}\int_{\mathbb{R}^{N}}|u|^{q}dx-\frac{1}{2_{s}^{\ast}}\int_{\mathbb{R}^{N}}|u|^{2_{s}^{\ast}}dx\right]\\
&=\inf_{u\in \mathcal{P}_{a,\mu}}\left[\frac{\mu}{q}\left(\frac{q\gamma_{q,s}}{2}-1\right)\int_{\mathbb{R}^{N}}|u|^{q}dx+\frac{s}{N}\int_{\mathbb{R}^{N}}|u|^{2_{s}^{\ast}}dx\right]> 0.
\end{align*}
Therefore, $$m(a,\mu)=\inf_{\mathcal{P}_{a,\mu}}E_{\mu}>0.$$
This finishes the proof.
\end{proof}
\begin{lemma}\label{Lem15}
There exists $k>0$ sufficiently small such that $$0<\sup_{\overline{A_{k}}}E_{\mu}<m(a,\mu)\ \text{and}\ u\in \overline{A_{k}}\Rightarrow E_{\mu}(u), P_{\mu}(u)>0,$$
where $ A_{k}=\left\{u\in S_{a}: ||u||^{2}_{D_{s}(\mathbb{R}^{N})}<k\right\}.$
\end{lemma}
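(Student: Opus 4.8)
The plan is to reproduce verbatim the strategy of Lemma \ref{Lem11}, the only genuine change being that in the $L^{2}$-supercritical regime every power appearing below the quadratic term is now strictly larger than $2$. First I would apply the fractional Gagliardo-Nirenberg-Sobolev inequality \eqref{U3} (with $\|u\|_{L^{2}(\mathbb{R}^{N})}=a$) together with the Sobolev inequality \eqref{int2} to obtain, for every $u\in S_{a}$,
$$E_{\mu}(u)\ge\frac12\|u\|^{2}_{D_{s}(\mathbb{R}^{N})}-\frac{\mu}{q}C^{q}_{N,q,s}a^{q(1-\gamma_{q,s})}\|u\|^{q\gamma_{q,s}}_{D_{s}(\mathbb{R}^{N})}-\frac{1}{2_{s}^{\ast}}S_{s}^{-\frac{2_{s}^{\ast}}{2}}\|u\|^{2_{s}^{\ast}}_{D_{s}(\mathbb{R}^{N})}$$
and
$$P_{\mu}(u)\ge s\|u\|^{2}_{D_{s}(\mathbb{R}^{N})}-s\mu\gamma_{q,s}C^{q}_{N,q,s}a^{q(1-\gamma_{q,s})}\|u\|^{q\gamma_{q,s}}_{D_{s}(\mathbb{R}^{N})}-sS_{s}^{-\frac{2_{s}^{\ast}}{2}}\|u\|^{2_{s}^{\ast}}_{D_{s}(\mathbb{R}^{N})}.$$

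Next I would invoke \eqref{int45}: since $\overline{p}<q<2_{s}^{\ast}$ we have $q\gamma_{q,s}>2$, and of course $2_{s}^{\ast}>2$, so both right-hand sides have the shape $c_{0}t^{2}-c_{1}t^{q\gamma_{q,s}}-c_{2}t^{2_{s}^{\ast}}$ with $c_{0},c_{1},c_{2}>0$ and with the two subtracted powers strictly above $2$; hence each is strictly positive for $t=\|u\|_{D_{s}(\mathbb{R}^{N})}$ sufficiently small. This yields a radius $k_{1}>0$ such that $E_{\mu}(u)>0$ and $P_{\mu}(u)>0$ for all $u\in\overline{A_{k_{1}}}$. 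In particular $\sup_{\overline{A_{k_{1}}}}E_{\mu}>0$, since $\overline{A_{k_{1}}}\neq\emptyset$: for any fixed $u\in S_{a}$ one has $\|t\star u\|^{2}_{D_{s}(\mathbb{R}^{N})}=e^{2st}\|u\|^{2}_{D_{s}(\mathbb{R}^{N})}\to0$ as $t\to-\infty$, so $t\star u\in A_{k_{1}}$ for $t$ negative enough.

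Finally, because $\mu>0$ the two subtracted terms in $E_{\mu}$ are nonpositive, so $E_{\mu}(u)\le\frac12\|u\|^{2}_{D_{s}(\mathbb{R}^{N})}\le\frac{k}{2}$ on $\overline{A_{k}}$. Lemma \ref{Lem14} gives $m(a,\mu)>0$, hence choosing $k<\min\{k_{1},\,2m(a,\mu)\}$ simultaneously keeps $E_{\mu}$ and $P_{\mu}$ positive on $\overline{A_{k}}$ and forces $\sup_{\overline{A_{k}}}E_{\mu}\le k/2<m(a,\mu)$, which is exactly the claim. I do not expect any serious obstacle here; the lone point needing care is the verification that $q\gamma_{q,s}>2$, which is precisely what distinguishes this situation from the $L^{2}$-critical one treated in Lemma \ref{Lem11}, and after that the estimate and the choice of a small radius are entirely routine.
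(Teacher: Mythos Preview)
Your proposal is correct and follows essentially the same approach as the paper: apply the fractional Gagliardo--Nirenberg--Sobolev and Sobolev inequalities to bound $E_{\mu}$ and $P_{\mu}$ from below by expressions of the form $c_{0}t^{2}-c_{1}t^{q\gamma_{q,s}}-c_{2}t^{2_{s}^{\ast}}$, use $q\gamma_{q,s}>2$ to get positivity for small $\|u\|_{D_{s}(\mathbb{R}^{N})}$, then shrink $k$ further using $E_{\mu}(u)\le\frac12\|u\|^{2}_{D_{s}(\mathbb{R}^{N})}$ and Lemma~\ref{Lem14}. Your write-up is in fact slightly more careful than the paper's, as you spell out explicitly why $q\gamma_{q,s}>2$ matters and why $\overline{A_{k}}$ is nonempty.
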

\begin{proof}
By fractional Gagliardo-Nirenberg-Sobolev inequality \eqref{U3} and Sobolev inequality \eqref{int2}, we have
\begin{align*}
E_{\mu}(u)\geq \frac{1}{2}||u||^{2}_{D_{s}(\mathbb{R}^{N})}-\frac{\mu}{q}C^{q}_{N,q,s}a^{q(1-\gamma_{q,s})}||u||^{q\gamma_{q,s}}_{D_{s}(\mathbb{R}^{N})}-\frac{1}{2_{s}^{\ast}}S^{-\frac{2_{s}^{\ast}}{2}}_{s}||u||^{2_{s}^{\ast}}_{D_{s}(\mathbb{R}^{N})}>0,
\end{align*}
and
\begin{align*}
P_{\mu}(u)&=s||u||^{2}_{D_{s}(\mathbb{R}^{N})}-s\mu\gamma_{q,s}\int_{\mathbb{R}^{N}} |u|^{q}dx-s\int_{\mathbb{R}^{N}} |u|^{2_{s}^{\ast}}dx\\
& \geq s||u||^{2}_{D_{s}(\mathbb{R}^{N})}-s\mu\gamma_{q,s}C^{q}_{N,q,s}a^{q(1-\gamma_{q,s})}||u||^{q\gamma_{q,s}}_{D_{s}(\mathbb{R}^{N})}-sS^{-\frac{2_{s}^{\ast}}{2}}_{s}||u||^{2_{s}^{\ast}}_{D_{s}(\mathbb{R}^{N})}>0.
\end{align*}
If $u\in \overline{A_{k}}$ for $k$ small enough.  By Lemma \ref{Lem14}, we know that $m(a,\mu)>0$, thus if necessary replacing $k$ with smaller quantity, we also have $$E_{\mu}(u)\leq \frac{1}{2}||u||^{2}_{D_{s}(\mathbb{R}^{N})}< m(a,\mu).$$
This ends the proof.
\end{proof}
In order to apply Proposition \ref{LM5} and recover compactness, we need an estimate from above on $m_{r}(a,\mu)=\inf_{\mathcal{P}_{a,\mu}\bigcap S^{r}_{a}}E_{\mu}$, where $S^{r}_{a}$ is the subset of the radial functions in $S_{a}.$
\begin{lemma}\label{Lem17}
If one of following conditions holds:
\begin{enumerate}
 \item[$(1)$] $ N>4s\  \text{and}\  \mu a^{q(1-\gamma_{q,s})}<\frac{S^{\frac{N}{4s}q(1-\gamma_{q,s})}_{s}}{\gamma_{q,s}}$;

 \item[$(2)$] $ N=\frac{q}{q-1}2s\ \text{and}\ \mu a^{q(1-\gamma_{q,s})}<\frac{S^{\frac{N}{4s}q(1-\gamma_{q,s})}_{s}}{\gamma_{q,s}}$;

 \item[$(3)$] $N=4s \  \text{or}\ \frac{q}{q-1}2s<N<4s\  \text{or}\ 2s<N<\frac{q}{q-1}2s $,
 \end{enumerate}
 then we have $m_{r}(a,\mu)< \frac{s}{N}S^{\frac{N}{2s}}_{s}.$
\end{lemma}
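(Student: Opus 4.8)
The plan is to run the test-function argument of Lemma~\ref{Lem16}, now with $q$ in place of $\overline p$ and with an extra dimensional case split. First I would keep the concentrating family used there: with $U_0$ the radial Sobolev extremal normalized by $\|U_0\|^2_{D_s(\mathbb{R}^N)}=\int_{\mathbb{R}^N}|U_0|^{2_{s}^{\ast}}dx=S_s^{N/(2s)}$, set $U_\epsilon(x)=\epsilon^{-\frac{N-2s}{2}}U_0(x/\epsilon)$, $u_\epsilon=\eta U_\epsilon$ for a fixed radial cut-off $\eta$, and $v_\epsilon=a\,u_\epsilon/\|u_\epsilon\|_{L^2(\mathbb{R}^N)}\in S^{r}_{a}$. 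Besides \eqref{int6}, \eqref{int7} and \eqref{int9}, I need the exact analogue of \eqref{int12} for the power $q$: the asymptotics of $\int_{\mathbb{R}^N}|u_\epsilon|^q dx$ switch at $N=\frac{q}{q-1}2s$ (three sub-cases $\epsilon^{N-\frac{N-2s}{2}q}$, $\epsilon^{N/2}\log(1/\epsilon)$, $\epsilon^{\frac{N-2s}{2}q}$), while those of $\|u_\epsilon\|^2_{L^2}$ switch at $N=4s$; since $\frac{q}{q-1}2s\in(2s,4s)$, these two thresholds partition $(2s,+\infty)$ exactly into the dimension ranges in (1)--(3).

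Next, by Lemma~\ref{Lem13} the fiber map $\Psi^{\mu}_{v_\epsilon}$ has a unique maximum point $t_{v_\epsilon,\mu}$ with $t_{v_\epsilon,\mu}\star v_\epsilon\in\mathcal{P}_{a,\mu}$, so that
$$
m_{r}(a,\mu)\le E_\mu(t_{v_\epsilon,\mu}\star v_\epsilon)=\Psi^{0}_{v_\epsilon}(t_{v_\epsilon,\mu})-\frac{\mu}{q}e^{q\gamma_{q,s}st_{v_\epsilon,\mu}}\int_{\mathbb{R}^N}|v_\epsilon|^q dx .
$$
By Step~1 of the proof of Lemma~\ref{Lem16}, $\Psi^{0}_{v_\epsilon}(t_{v_\epsilon,\mu})\le\sup_{t\in\mathbb{R}}\Psi^{0}_{v_\epsilon}(t)=\frac{s}{N}S_s^{N/(2s)}+O(\epsilon^{N-2s})$, so the whole matter reduces to showing that the non-negative loss term $\frac{\mu}{q}e^{q\gamma_{q,s}st_{v_\epsilon,\mu}}\int_{\mathbb{R}^N}|v_\epsilon|^q dx$ strictly dominates this $O(\epsilon^{N-2s})$ remainder as $\epsilon\to0^{+}$.

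To get at $e^{st_{v_\epsilon,\mu}}$, I would differentiate $\Psi^{\mu}_{v_\epsilon}$ at its maximum: since $P_\mu(t_{v_\epsilon,\mu}\star v_\epsilon)=0$,
$$
\|v_\epsilon\|^2_{D_s(\mathbb{R}^N)}=\mu\gamma_{q,s}e^{(q\gamma_{q,s}-2)st_{v_\epsilon,\mu}}\int_{\mathbb{R}^N}|v_\epsilon|^q dx+e^{(2_{s}^{\ast}-2)st_{v_\epsilon,\mu}}\int_{\mathbb{R}^N}|v_\epsilon|^{2_{s}^{\ast}}dx ,
$$
and together with \eqref{U3} this yields a lower bound for $e^{st_{v_\epsilon,\mu}}$ in terms of the three integral quantities — it is here that the smallness of $\mu a^{q(1-\gamma_{q,s})}$ in cases (1)--(2) is invoked. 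Substituting the $\epsilon$-asymptotics of the first step and running the power count on $\epsilon$ separately in each dimension range then closes the argument: when $N>4s$ or $N=\frac{q}{q-1}2s$ the $L^2$-renormalization forces $\int_{\mathbb{R}^N}|v_\epsilon|^q dx\to+\infty$, and the smallness hypothesis keeps the ensuing constant favourable so the loss term still beats $O(\epsilon^{N-2s})$; when $N=4s$, or $\frac{q}{q-1}2s<N<4s$, or $2s<N<\frac{q}{q-1}2s$, the quantity $e^{q\gamma_{q,s}st_{v_\epsilon,\mu}}\int_{\mathbb{R}^N}|v_\epsilon|^q dx$ stays bounded below by a positive constant (up to a harmless logarithm), which dominates $O(\epsilon^{N-2s})$ automatically. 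Either way $m_{r}(a,\mu)<\frac{s}{N}S_s^{N/(2s)}$ for $\epsilon$ small, which is the claim.

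I expect the main obstacle to be precisely this case analysis rather than any single estimate. The two thresholds $N=4s$ and $N=\frac{q}{q-1}2s$ split $(2s,+\infty)$ into several regimes in which the maximizer $t_{v_\epsilon,\mu}$ behaves quite differently as $\epsilon\to0^{+}$ — it stays bounded in some of them and drifts to $-\infty$ in others — so the decisive strict gain below $\frac{s}{N}S_s^{N/(2s)}$ has to be extracted from a different term in each regime; organizing this bookkeeping is what forces the dimensional restrictions and the smallness of $\mu a^{q(1-\gamma_{q,s})}$ in the two borderline cases.
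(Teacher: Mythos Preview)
Your proposal is correct and follows essentially the same route as the paper: concentrating bubbles $v_\epsilon\in S_a^r$, the fiber-map splitting $\Psi^\mu_{v_\epsilon}(t_{v_\epsilon,\mu})=\Psi^0_{v_\epsilon}(t_{v_\epsilon,\mu})-\frac{\mu}{q}e^{q\gamma_{q,s}st_{v_\epsilon,\mu}}\int|v_\epsilon|^q$, the bound $\sup\Psi^0_{v_\epsilon}=\frac{s}{N}S_s^{N/(2s)}+O(\epsilon^{N-2s})$, a lower bound on $e^{st_{v_\epsilon,\mu}}$ from $P_\mu=0$, and the five-fold dimensional case split at $N=4s$ and $N=\frac{q}{q-1}2s$. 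One technical detail differs: for the lower bound on $e^{st_{v_\epsilon,\mu}}$ the paper does not invoke \eqref{U3} but instead first drops the $\mu$-term in $P_\mu=0$ to get the upper bound $e^{(2_s^\ast-2)st_{v_\epsilon,\mu}}\le \|v_\epsilon\|^2_{D_s}/\int|v_\epsilon|^{2_s^\ast}$, then feeds this back into the $e^{(q\gamma_{q,s}-2)st}$ factor (legitimate since $q\gamma_{q,s}>2$); in the two delicate cases $N>4s$ and $N=\frac{q}{q-1}2s$ it controls $\int|u_\epsilon|^q$ via $L^2$--$L^{2_s^\ast}$ interpolation, which is precisely where the hypothesis $\mu a^{q(1-\gamma_{q,s})}<S_s^{\frac{N}{4s}q(1-\gamma_{q,s})}/\gamma_{q,s}$ enters.
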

\begin{proof}
Let us recall the definition of $u_{\epsilon}$ and $v_{\epsilon}$ as Lemma \ref{Lem16}. It is easy to see that $u_{\epsilon}\in C^{\infty}_{0}(\mathbb{R}^{N},[0,1])$ and $v_{\epsilon}\in S_{a}^{r}.$ By Lemma \ref{Lem8}, we know that $$m_{r}(a,\mu)=\inf_{\mathcal{P}_{a,\mu}\bigcap S^{r}_{a}}E_{\mu}\leq E_{\mu}(t_{v_{\epsilon},\mu}\star v_{\epsilon})=\max_{t\in \mathbb{R}}E_{\mu}(t\star v_{\epsilon}).$$

By the same argument as step 1 in Lemma \ref{Lem16}, we have $$\Psi^{0}_{v_{\epsilon}}(t_{v_{\epsilon},0})=\frac{s}{N}S^{\frac{N}{2s}}_{s}+O(\epsilon^{N-2s}).$$
{\bf Step 1.} Estimate on $t_{v_{\epsilon}, \mu}$. Since  $$\Psi^{\mu}_{v_{\epsilon}}(t)=E_{\mu}(t\star v_{\epsilon})=\frac{e^{2st}}{2}||v_{\epsilon}||^{2}_{D_{s}(\mathbb{R}^{N})}-\mu\frac{e^{q\gamma_{q,s}st}}{q}\int_{\mathbb{R}^{N}} |v_{\epsilon}|^{q}dx-\frac{e^{2_{s}^{\ast}st}}{2_{s}^{\ast}}\int_{\mathbb{R}^{N}} |v_{\epsilon}|^{2_{s}^{\ast}}dx $$ and $t_{v_{\epsilon},\mu}$ be the unique maximum point of $\Psi^{\mu}_{v_{\epsilon}}(t)$, then by $(\Psi^{\mu}_{v_{\epsilon}})'(t)=P_{\mu}(t_{v_{\epsilon},\mu}\star v_{\epsilon})=0,$ we have$$e^{2_{s}^{\ast}st_{v_{\epsilon}, \mu}} \int_{\mathbb{R}^{N}} |v_{\epsilon}|^{2_{s}^{\ast}}dx=e^{2st_{v_{\epsilon}, \mu}}||v_{\epsilon}||^{2}_{D_{s}(\mathbb{R}^{N})}-\mu\gamma_{q,s}e^{q\gamma_{q,s}st_{v_{\epsilon}, \mu}}\int_{\mathbb{R}^{N}} |v_{\epsilon}|^{q}dx\leq e^{2st_{v_{\epsilon}, \mu}}||v_{\epsilon}||^{2}_{D_{s}(\mathbb{R}^{N})},$$ which means that
\begin{align}\label{U24}
e^{st_{v_{\epsilon}, \mu}}\leq \left(\frac{||v_{\epsilon}||^{2}_{D_{s}(\mathbb{R}^{N})}}{\int_{\mathbb{R}^{N}} |v_{\epsilon}|^{2_{s}^{\ast}}dx}\right)^{\frac{1}{2_{s}^{\ast}-2}}.
\end{align}
By \eqref{U24}, $q\gamma_{q,s}>2$ and $v_{\epsilon}=\frac{au_{\epsilon}}{\|u_{\epsilon}\|_{L^{2}(\mathbb{R}^{N})}}$, we have
\begin{align}\label{U25}
&e^{(2_{s}^{\ast}-2)st_{v_{\epsilon}, \mu}}\\\nonumber
&=\frac{||v_{\epsilon}||^{2}_{D_{s}(\mathbb{R}^{N})}}{\int_{\mathbb{R}^{N}} |v_{\epsilon}|^{2_{s}^{\ast}}dx}-\mu\gamma_{q,s}e^{(q\gamma_{q,s}-2)st_{v_{\epsilon}, \mu}}\frac{\int_{\mathbb{R}^{N}} |v_{\epsilon}|^{q}dx}{\int_{\mathbb{R}^{N}} |v_{\epsilon}|^{2_{s}^{\ast}}dx}\\\nonumber
&\geq \frac{||v_{\epsilon}||^{2}_{D_{s}(\mathbb{R}^{N})}}{\int_{\mathbb{R}^{N}} |v_{\epsilon}|^{2_{s}^{\ast}}dx}-\mu\gamma_{q,s}\frac{\int_{\mathbb{R}^{N}} |v_{\epsilon}|^{q}dx}{\int_{\mathbb{R}^{N}} |v_{\epsilon}|^{2_{s}^{\ast}}dx}\left(\frac{||v_{\epsilon}||^{2}_{D_{s}(\mathbb{R}^{N})}}{\int_{\mathbb{R}^{N}} |v_{\epsilon}|^{2_{s}^{\ast}}dx}\right)^{\frac{q\gamma_{q,s}-2}{2_{s}^{\ast}-2}}\\\nonumber
&\geq\frac{\|u_{\epsilon}\|^{2_{s}^{\ast}-2}_{L^{2}(\mathbb{R}^{N})}}{a^{2_{s}^{\ast}-2}} \frac{||u_{\epsilon}||^{2}_{D_{s}(\mathbb{R}^{N})}}{\int_{\mathbb{R}^{N}} |u_{\epsilon}|^{2_{s}^{\ast}}dx}-\mu\gamma_{q,s}\frac{\|u_{\epsilon}\|^{2_{s}^{\ast}-q}_{L^{2}(\mathbb{R}^{N})}}{a^{2_{s}^{\ast}-q}}\frac{\int_{\mathbb{R}^{N}} |u_{\epsilon}|^{q}dx}{\int_{\mathbb{R}^{N}} |u_{\epsilon}|^{2_{s}^{\ast}}dx}\left(\frac{\|u_{\epsilon}\|^{2_{s}^{\ast}-2}_{L^{2}(\mathbb{R}^{N})}}{a^{2_{s}^{\ast}-2}} \frac{||u_{\epsilon}||^{2}_{D_{s}(\mathbb{R}^{N})}}{\int_{\mathbb{R}^{N}} |u_{\epsilon}|^{2_{s}^{\ast}}dx}\right)^{\frac{q\gamma_{q,s}-2}{2_{s}^{\ast}-2}}\\\nonumber
&\geq\frac{\|u_{\epsilon}\|^{2_{s}^{\ast}-2}_{L^{2}(\mathbb{R}^{N})}}{a^{2_{s}^{\ast}-2}} \frac{\left(||u_{\epsilon}||^{2}_{D_{s}(\mathbb{R}^{N})}\right)^{\frac{q\gamma_{q,s}-2}{2_{s}^{\ast}-2}}}{\int_{\mathbb{R}^{N}} |u_{\epsilon}|^{2_{s}^{\ast}}dx}\left[\left(||u_{\epsilon}||^{2}_{D_{s}(\mathbb{R}^{N})}\right)^{\frac{2_{s}^{\ast}-q\gamma_{q,s}}{2_{s}^{\ast}-2}}-\frac{\mu\gamma_{q,s}a^{q(1-\gamma_{q,s})}\int_{\mathbb{R}^{N}} |u_{\epsilon}|^{q}dx}{\left(\int_{\mathbb{R}^{N}} |u_{\epsilon}|^{2_{s}^{\ast}}dx\right)^{\frac{q\gamma_{q,s}-2}{2_{s}^{\ast}-2}}\|u_{\epsilon}\|^{q(1-\gamma_{q,s})}_{L^{2}(\mathbb{R}^{N})}}\right].
\end{align}
By the estimates in \eqref{int6}, \eqref{int7}, \eqref{int9} and \eqref{int12}, we can infer that there exist $C_{1}, C_{2}, C_{3}>0$ (depending on $N,q$) such that

 \begin{align}\label{U26}
 \left(||u_{\epsilon}||^{2}_{D_{s}(\mathbb{R}^{N})}\right)^{\frac{2_{s}^{\ast}-q\gamma_{q,s}}{2_{s}^{\ast}-2}}\geq C_{1} \ \text{and}\ C_{2}\leq\left(\int_{\mathbb{R}^{N}} |u_{\epsilon}|^{2_{s}^{\ast}}dx\right)^{\frac{q\gamma_{q,s}-2}{2_{s}^{\ast}-2}}\leq \frac{1}{C_{2}}
 \end{align}
 and
 \begin{align}\label{U27}
&\frac{\int_{\mathbb{R}^{N}} |u_{\epsilon}|^{q}dx}{\|u_{\epsilon}\|^{q(1-q\gamma_{q,s})}_{L^{2}(\mathbb{R}^{N})}}
 \leq \left\{
\begin{aligned}
&C\epsilon^{N-\frac{N-2s}{2}q-q(1-\gamma_{q,s})},  &\text{if}\ &N>4s, \\
&C\epsilon^{N-\frac{N-2s}{2}q-q(1-\gamma_{q,s})}|\ln \epsilon|^{\frac{q(\gamma_{q,s}-1)}{2}}, &\text{if}\ &N=4s, \\
&C\epsilon^{N-\frac{N-2s}{2}q-\frac{N-2s}{2}q(1-\gamma_{q,s})}, &\text{if} \  &\frac{q}{q-1}2s<N<4s,\\
&C\epsilon^{\frac{N}{2}-\frac{N-2s}{2}q(1-\gamma_{q,s})}|\ln \epsilon| , &\text{if} \   &N=\frac{q}{q-1}2s,\\
&C\epsilon^{\frac{N-2s}{2}q-\frac{N-2s}{2}q(1-\gamma_{q,s})}, &\text{if} \   &2s<N<\frac{q}{q-1}2s.
\end{aligned}
\right.
\end{align}
Next, we claim that
\begin{align*}
e^{(2_{s}^{\ast}-2)st_{v_{\epsilon}, \mu}}\geq  C\frac{\|u_{\epsilon}\|^{2_{s}^{\ast}-2}_{L^{2}(\mathbb{R}^{N})}}{a^{2_{s}^{\ast}-2}},
\end{align*}
under suitable conditions.

\vspace{1mm}

{\bf Case 1:} $N>4s$. Since  $\overline{p}<q<2_{s}^{\ast}$, we can deduce that
\begin{align}\label{U30}
N-\frac{N-2s}{2}q-q(1-\gamma_{q,s})<0.
\end{align}
Indeed, since $\overline{p}<q<2_{s}^{\ast}$, we have $4s/N<q-2<4s/(N-2s)$, so $$N-\frac{N-2s}{2}q-q(1-\gamma_{q,s})=N-\frac{N-2s}{2}(q-2)-(N-2s)-(q-2)-2+\frac{N(q-2)}{2s}:=f(q-2),$$ it is easy to deduce that $f(q-2)$ is strictly increasing about $q-2$, since $f(\frac{4s}{N-2s})=0$, thus we obtain $$N-\frac{N-2s}{2}q-q(1-\gamma_{q,s})<0.$$ So we can not get
\begin{align*}
e^{(2_{s}^{\ast}-2)st_{v_{\epsilon}, \mu}}\geq C\frac{\|u_{\epsilon}\|^{2_{s}^{\ast}-2}_{L^{2}(\mathbb{R}^{N})}}{a^{2_{s}^{\ast}-2}} \left[C_{1}-\mu\gamma_{q,s}a^{q(1-\gamma_{q,s})}\frac{C_{3}}{C_{2}}o_{\epsilon}(1)\right]\geq  C\frac{\|u_{\epsilon}\|^{2_{s}^{\ast}-2}_{L^{2}(\mathbb{R}^{N})}}{a^{2_{s}^{\ast}-2}}
\end{align*}
for a positive constant $C=C(N,q,\mu,a)>0$  for every $\epsilon\in (0,\epsilon_{0})$ with $\epsilon_{0}$ sufficiently small.
Thus, we have to give a more precise estimate, let us recall the inequality about $e^{(2_{s}^{\ast}-2)st_{v_{\epsilon}, \mu}}$ in \eqref{U25}, by  well-known interpolation inequality, we have
\begin{align}\label{U28}
\frac{\int_{\mathbb{R}^{N}} |u_{\epsilon}|^{q}dx}{\left(\int_{\mathbb{R}^{N}} |u_{\epsilon}|^{2_{s}^{\ast}}dx\right)^{\frac{q\gamma_{q,s}-2}{2_{s}^{\ast}-2}}\|u_{\epsilon}\|^{q(1-\gamma_{q,s})}_{L^{2}(\mathbb{R}^{N})}}&\leq\frac{\left(\int_{\mathbb{R}^{N}} |u_{\epsilon}|^{2_{s}^{\ast}}dx\right)^{\frac{q-2}{2_{s}^{\ast}-2}}\left(\int_{\mathbb{R}^{N}} |u_{\epsilon}|^{2}dx\right)^{\frac{2_{s}^{\ast}-q}{2_{s}^{\ast}-2}}}{\left(\int_{\mathbb{R}^{N}} |u_{\epsilon}|^{2_{s}^{\ast}}dx\right)^{\frac{q\gamma_{q,s}-2}{2_{s}^{\ast}-2}}\|u_{\epsilon}\|^{q(1-\gamma_{q,s})}_{L^{2}(\mathbb{R}^{N})}}\\\nonumber
&\leq \left(\int_{\mathbb{R}^{N}} |u_{\epsilon}|^{2_{s}^{\ast}}dx\right)^{\frac{q(1-\gamma_{q,s})}{2_{s}^{\ast}-2}}=\left(\left(\int_{\mathbb{R}^{N}} |u_{\epsilon}|^{2_{s}^{\ast}}dx\right)^{\frac{2}{2_{s}^{\ast}}}\right)^{\frac{2_{s}^{\ast}-q\gamma_{q,s}}{2_{s}^{\ast}-2}}.
\end{align}
Therefore,  by \eqref{U25} and \eqref{U28},  we have
\begin{align}\label{U29}
&e^{(2_{s}^{\ast}-2)st_{v_{\epsilon}, \mu}}
=\frac{||v_{\epsilon}||^{2}_{D_{s}(\mathbb{R}^{N})}}{\int_{\mathbb{R}^{N}} |v_{\epsilon}|^{2_{s}^{\ast}}dx}-\mu\gamma_{q,s}e^{(q\gamma_{q,s}-2)st_{v_{\epsilon}, \mu}}\frac{\int_{\mathbb{R}^{N}} |v_{\epsilon}|^{q}dx}{\int_{\mathbb{R}^{N}} |v_{\epsilon}|^{2_{s}^{\ast}}dx}\\\nonumber
&\geq\frac{\|u_{\epsilon}\|^{2_{s}^{\ast}-2}_{L^{2}(\mathbb{R}^{N})}}{a^{2_{s}^{\ast}-2}} \frac{\left(||u_{\epsilon}||^{2}_{D_{s}(\mathbb{R}^{N})}\right)^{\frac{q\gamma_{q,s}-2}{2_{s}^{\ast}-2}}}{\int_{\mathbb{R}^{N}} |u_{\epsilon}|^{2_{s}^{\ast}}dx}\left[\left(||u_{\epsilon}||^{2}_{D_{s}(\mathbb{R}^{N})}\right)^{\frac{2_{s}^{\ast}-q\gamma_{q,s}}{2_{s}^{\ast}-2}}-\mu\gamma_{q,s}a^{q(1-\gamma_{q,s})}\left(\left(\int_{\mathbb{R}^{N}} |u_{\epsilon}|^{2_{s}^{\ast}}dx\right)^{\frac{2}{2_{s}^{\ast}}}\right)^{\frac{2_{s}^{\ast}-q\gamma_{q,s}}{2_{s}^{\ast}-2}}\right].
\end{align}
Thus, if the right hand of above is positive provided that $$\mu\gamma_{q,s}a^{q(1-\gamma_{q,s})}<\left(\frac{||u_{\epsilon}||^{2}_{D_{s}(\mathbb{R}^{N})}}{\left(\int_{\mathbb{R}^{N}} |u_{\epsilon}|^{2_{s}^{\ast}}dx\right)^{\frac{2}{2_{s}^{\ast}}}}\right)^{{\frac{2_{s}^{\ast}-q\gamma_{q,s}}{2_{s}^{\ast}-2}}}=S^{\frac{N}{4s}q(1-\gamma_{q,s})}_{s}+O(\epsilon^{N-2s})$$
 Thus, if $N>4s\ \text{and}\ \mu a^{q(1-\gamma_{q,s})}<\frac{S^{\frac{N}{4s}q(1-\gamma_{q,s})}_{s}}{\gamma_{q,s}}$, we have $$e^{(2_{s}^{\ast}-2)st_{v_{\epsilon}, \mu}}\geq \frac{C\|u_{\epsilon}\|^{2_{s}^{\ast}-2}_{L^{2}(\mathbb{R}^{N})}}{a^{2_{s}^{\ast}-2}} .$$

{\bf Case 2:}  $N=4s$.  Then we have $3<q<4$ and $|\ln \epsilon|\backsim \frac{1}{\epsilon}$ as $\epsilon\rightarrow 0.$ Thus   $$\epsilon^{N-\frac{N-2s}{2}q-q(1-\gamma_{q,s})}|\ln \epsilon|^{\frac{q(\gamma_{q,s}-1)}{2}}=\epsilon^{(4-q)(s-1)}|\ln \epsilon|^{q-4}\rightarrow 0\  \text{as} \ \epsilon\rightarrow 0.$$
Furthermore, $$\frac{\int_{\mathbb{R}^{N}} |u_{\epsilon}|^{q}dx}{\|u_{\epsilon}\|^{q(1-q\gamma_{q,s})}_{L^{2}(\mathbb{R}^{N})}}\leq C\epsilon^{N-\frac{N-2s}{2}q-q(1-\gamma_{q,s})}|\ln \epsilon|^{\frac{q(\gamma_{q,s}-1)}{2}}=o_{\epsilon}(1).$$
So, we have
\begin{align*}
e^{(2_{s}^{\ast}-2)st_{v_{\epsilon}, \mu}}\geq C\frac{\|u_{\epsilon}\|^{2_{s}^{\ast}-2}_{L^{2}(\mathbb{R}^{N})}}{a^{2_{s}^{\ast}-2}} \left[C_{1}-\mu\gamma_{q,s}a^{q(1-\gamma_{q,s})}\frac{C_{3}}{C_{2}}o_{\epsilon}(1)\right]\geq  C\frac{\|u_{\epsilon}\|^{2_{s}^{\ast}-2}_{L^{2}(\mathbb{R}^{N})}}{a^{2_{s}^{\ast}-2}}.
\end{align*}

{\bf Case 3:} $\frac{q}{q-1}2s<N<4s$. By the same arguments as \eqref{U30}, we have $$N-\frac{N-2s}{2}q-\frac{N-2s}{2}q(1-\gamma_{q,s})>0.$$  Thus, $$\epsilon^{N-\frac{N-2s}{2}q-\frac{N-2s}{2}q(1-\gamma_{q,s})} \rightarrow 0 \  \text{as} \ \epsilon\rightarrow 0.$$
Therefore, $$\frac{\int_{\mathbb{R}^{N}} |u_{\epsilon}|^{q}dx}{\|u_{\epsilon}\|^{q(1-q\gamma_{q,s})}_{L^{2}(\mathbb{R}^{N})}}\leq C\epsilon^{N-\frac{N-2s}{2}q-\frac{N-2s}{2}q(1-\gamma_{q,s})}=o_{\epsilon}(1).$$
So, we have
\begin{align*}
e^{(2_{s}^{\ast}-2)st_{v_{\epsilon}, \mu}}\geq C\frac{\|u_{\epsilon}\|^{2_{s}^{\ast}-2}_{L^{2}(\mathbb{R}^{N})}}{a^{2_{s}^{\ast}-2}} \left[C_{1}-\mu\gamma_{q,s}a^{q(1-\gamma_{q,s})}\frac{C_{3}}{C_{2}}o_{\epsilon}(1)\right]\geq  C\frac{\|u_{\epsilon}\|^{2_{s}^{\ast}-2}_{L^{2}(\mathbb{R}^{N})}}{a^{2_{s}^{\ast}-2}}.
\end{align*}
{\bf Case 4:} $N=\frac{q}{q-1}2s$. By the similar arguments as {\bf Case 1}, we get $$C\epsilon^{\frac{N}{2}-\frac{N-2s}{2}q(1-\gamma_{q,s})}|\ln \epsilon|\rightarrow +\infty  \  \text{as} \ \epsilon\rightarrow 0.$$ Thus, by the same argument as {\bf Case 1}, we know that if $N=\frac{q}{q-1}2s\ \text{and}\ \mu a^{q(1-\gamma_{q,s})}<\frac{S^{\frac{N}{4s}q(1-\gamma_{q,s})}_{s}}{\gamma_{q,s}}$, then we have $$e^{(2_{s}^{\ast}-2)st_{v_{\epsilon}, \mu}}\geq \frac{C\|u_{\epsilon}\|^{2_{s}^{\ast}-2}_{L^{2}(\mathbb{R}^{N})}}{a^{2_{s}^{\ast}-2}} .$$
{\bf Case 5:} $2s<N<\frac{q}{q-1}2s$. It is easy to see that $$\frac{\int_{\mathbb{R}^{N}} |u_{\epsilon}|^{q}dx}{\|u_{\epsilon}\|^{q(1-q\gamma_{q,s})}_{L^{2}(\mathbb{R}^{N})}}\leq C\epsilon^{\frac{N-2s}{2}q-\frac{N-2s}{2}q(1-\gamma_{q,s})}=o_{\epsilon}(1).$$
Then we have
\begin{align*}
e^{(2_{s}^{\ast}-2)st_{v_{\epsilon}, \mu}}\geq C\frac{\|u_{\epsilon}\|^{2_{s}^{\ast}-2}_{L^{2}(\mathbb{R}^{N})}}{a^{2_{s}^{\ast}-2}} \left[C_{1}-\mu\gamma_{q,s}a^{q(1-\gamma_{q,s})}\frac{C_{3}}{C_{2}}o_{\epsilon}(1)\right]\geq  C\frac{\|u_{\epsilon}\|^{2_{s}^{\ast}-2}_{L^{2}(\mathbb{R}^{N})}}{a^{2_{s}^{\ast}-2}}.
\end{align*}

{\bf Step 2.} Estimate on $\max_{t\in \mathbb{R}}\Psi^{\mu}_{v_{\epsilon}}(t)$.
 \begin{align*}
 &\max_{t\in \mathbb{R}}\Psi^{\mu}_{v_{\epsilon}}(t)=\Psi^{\mu}_{v_{\epsilon}}(t_{v_{\epsilon},\mu})=\Psi^{0}_{v_{\epsilon}}(t_{v_{\epsilon},\mu})-\mu\frac{e^{q \gamma_{q,s} st_{v_{\epsilon},\mu}}}{q}\int_{\mathbb{R}^{N}} |v_{\epsilon}|^{q}dx\\
 &\leq \sup_{\mathbb{R}}\Psi^{0}_{v_{\epsilon}}-\frac{\mu C}{q}\frac{\|u_{\epsilon}\|^{q\gamma_{q,s}}_{L^{2}(\mathbb{R}^{N})}}{a^{q\gamma_{q,s}}}\frac{a^{q}}{\|u_{\epsilon}\|^{q}_{L^{2}(\mathbb{R}^{N})}} \int_{\mathbb{R}^{N}} |u_{\epsilon}|^{q}dx\\
 &\leq \frac{s}{N}S^{\frac{N}{2s}}_{s}+O(\epsilon^{N-2s})-C\frac{\mu}{q}\gamma_{q,s}a^{q(1-\gamma_{q,s})}\frac{\int_{\mathbb{R}^{N}} |u_{\epsilon}|^{q}dx}{\|u_{\epsilon}\|^{q(1-q\gamma_{q,s})}_{L^{2}(\mathbb{R}^{N})}}.
 \end{align*}
 By \eqref{U27}, we know that $$\max_{t\in \mathbb{R}}\Psi^{\mu}_{v_{\epsilon}}(t)\leq \frac{s}{N}S^{\frac{N}{2s}}_{s},$$ for $\epsilon$ small enough. This completes the proof.
\end{proof}
\section{Proof of Theorem \ref{TH1}}\label{sec5}

 Let $\{v_{n}\}$  be a minimizing sequence for $\inf_{A_{R_{0}}}E_{\mu}(u)$. By Lemma \ref{Lem4}, for every $n$ we can take $t_{v_{n}}\star v_{n}\in \mathcal{P}^{+}_{a,\mu}$  such that $||t_{v_{n}}\star v_{n}||_{D_{s}(\mathbb{R}^{N})}\leq R_{0}$  and $$E_{\mu}(u)(t_{v_{n}}\star v_{n})=\min\{E_{\mu}(t\star v_{n}):t\in \mathbb{R}\ \text{and}\ ||t\star v_{n}||_{D_{s}(\mathbb{R}^{N})}< R_{0} \}\leq E_{\mu}(v_{n}).$$Thus, we obtain a new minimizing sequence $\{w_{n}=t_{v_{n}}\star v_{n}\}$ with $w_{n}\in S^{r}_{a}\cap\mathcal{P}^{+}_{a,\mu} $ radially decreasing for every $n$. By Lemma \ref{Lem7}, we have $||w_{n}||_{D_{s}(\mathbb{R}^{N})}<R_{0}-\rho$ for every $n$ and hence by Ekeland's variational principle in a standard way, we know the existence of a new minimizing sequence for $\{u_{n}\}\subset A_{R_{0}}$ for $m(a,\mu)$ with $\|u_{n}-w_{n}\|\rightarrow 0$ as $n\rightarrow +\infty $, which is also a Palais-Smale sequence for $E_{\mu}$ on $S_{a}$. By the boundedness of $\{w_{n}\}$, $\|u_{n}-w_{n}\|\rightarrow 0$,  Br\'ezis-Lieb lemma and Sobolev embedding theorem, we have $$\|u_{n}\|^{2}_{D_{s}(\mathbb{R}^{N})}=\|u_{n}-w_{n}\|^{2}_{D_{s}(\mathbb{R}^{N})}+\|w_{n}\|^{2}_{D_{s}(\mathbb{R}^{N})})+o_{n}(1)=\|w_{n}\|^{2}_{D_{s}(\mathbb{R}^{N})})+o_{n}(1),$$
$$\int_{\mathbb{R}^{N}}|u_{n}|^{p}dx=\int_{\mathbb{R}^{N}}|u_{n}-w_{n}|^{p}dx+\int_{\mathbb{R}^{N}}|w_{n}|^{p}+o_{n}(1)=\int_{\mathbb{R}^{N}}|w_{n}|^{p}+o_{n}(1), \ \text{for}\ \ p\in [2,2_{s}^{\ast}].$$
Thus, $$P_{\mu}(u_{n})=P_{\mu}(u_{n})+o_{n}(1)\rightarrow \ \text{as}\ n\rightarrow +\infty.$$
Therefore,  one of the alternatives in  Proposition \ref{LM5} holds. We prove that the second alternative in  Proposition \ref{LM5} occurs. Assume by contradiction that there exists a sequence $u_{n}\rightharpoonup u$ weakly in $H^{s}(\mathbb{R}^{N})$ but not strongly , where $u\not\equiv0$ is a solution of \eqref{int1} for some $\lambda<0$, and $$E_{\mu}(u)\leq m(a,\mu)- \frac{s}{N}S^{\frac{N}{2s}}_{s}.$$ Since $u$ is a solution  of\eqref{int1}, we have $P_{\mu}(u)=0,$ which implies that $$||u||^{2}_{D_{s}(\mathbb{R}^{N})}=\mu\gamma_{q,s}\int_{\mathbb{R}^{N}} |u|^{q}dx+\int_{\mathbb{R}^{N}} |u|^{2_{s}^{\ast}}dx. $$
Therefore
\begin{align*}
m(a,\mu)&\geq E_{\mu}(u)+ \frac{s}{N}S^{\frac{N}{2s}}_{s}= \frac{s}{N}S^{\frac{N}{2s}}_{s}+\frac{s}{N}||u||^{2}_{D_{s}(\mathbb{R}^{N})}
-\frac{\mu}{q}\left(1-\frac{q\gamma_{q,s}}{2_{s}^{\ast}}\right)\int_{\mathbb{R}^{N}}|u|^{q}dx\\
&\geq \frac{s}{N}S^{\frac{N}{2s}}_{s}+\frac{s}{N}||u||^{2}_{D_{s}(\mathbb{R}^{N})}
-\frac{\mu}{q}\left(1-\frac{q\gamma_{q,s}}{2_{s}^{\ast}}\right)C^{q}_{N,q,s}a^{q(1-\gamma_{q,s})}||u||^{q\gamma_{q,s}}_{D_{s}(\mathbb{R}^{N})}.
\end{align*}
Next, we show that the right hand side of above inequality is positive under suitable conditions, then we can get a contradiction with $m(a,\mu)<0$. Let $$\vartheta(t)=\frac{s}{N}t^{2}
-\frac{\mu}{q}\left(1-\frac{q\gamma_{q,s}}{2_{s}^{\ast}}\right)C^{q}_{N,q,s}a^{q(1-\gamma_{q,s})}t^{q\gamma_{q,s}}.$$
Then it is easy to see that the function $\vartheta(t)$ has a unique minimum point $\overline{t}$ and $$\vartheta(\overline{t})=-\frac{2-q\gamma_{q,s}}{q}\left[\frac{N\gamma_{q,s}}{s}\right]^{\frac{q\gamma_{q,s}}{2-q\gamma_{q,s}}}\left[\frac{2_{s}^{\star}-q\gamma_{q,s}}{22_{s}^{\star}}C^{q}_{N,q,s}\right]^{\frac{2}{2-q\gamma_{q,s}}}[\mu a^{q(1-\gamma_{q,s})}]^{\frac{2}{2-q\gamma_{q,s}}}<0.$$
If $$\vartheta(\overline{t})>-\frac{s}{N}S^{\frac{N}{2s}}_{s},$$ which yields that $$\mu a^{q(1-\gamma_{q,s})}<\frac{22_{s}^{\star}}{(2_{s}^{\star}-q\gamma_{q,s})C^{q}_{N,q,s}}\left(\frac{Nq\gamma^{2}_{q,s}S^{\frac{N}{2s}}_{s}}{(2-q\gamma_{q,s})s}\right)^{\frac{2-q\gamma_{q,s}}{2}},$$
then we have
\begin{align*}
m(a,\mu)\geq  \frac{s}{N}S^{\frac{N}{2s}}_{s}+\vartheta(||u||_{D_{s}(\mathbb{R}^{N})})\geq \frac{s}{N}S^{\frac{N}{2s}}_{s}+\vartheta(\overline{t})>0,
\end{align*}
which contradicts the fact that $m(a,\mu)<0$. Thus  $u_{n}\rightarrow u$ strongly in $H^{s}(\mathbb{R}^{N}),$ $E_{\mu}(u)=m(a,\mu)$ and $u$ solves \eqref{int1}--\eqref{in2} for some $\lambda<0.$ It remains to show that any ground state is a local minimizer for $E_{\mu}$ on $A_{R_{0}}$.  Since $E_{\mu}(u)=m(a,\mu)$, then $u\in \mathcal{P}_{a,\mu}$ and $E_{\mu}(u)<0$, so by Lemma \ref{Lem4} we have that $u\in \mathcal{P}^{+}_{a,\mu} \subset  A_{R_{0}}$ and $$E_{\mu}(u)=m(a,\mu)=\inf_{ A_{R_{0}}}E_{\mu}(u)\ \text{and}\ ||u||_{D_{s}(\mathbb{R}^{N})}<R_{0}.$$
Therefore, the proof of Theorem \ref{TH1} is complete.

\section{Proof of Theorems \ref{TH2}--\ref{TH3}}\label{sec6}
We first list some well-known results, which will be used to prove Theorems \ref{TH2}--\ref{TH3}. For this purpose, we give the following definition.
\begin{definition}{\rm (see \cite[Definition 3.1]{NGB})}
Let B be a closed subset of X. We shall say that a class $\mathcal{F}$ of compact subsets of X is a homotopy-stable family with boundary B provided that
 \begin{enumerate}
\item[$(a)$] every set in $\mathcal{F}$ contains B.
\item[$(b)$] for any set A in $\mathcal{F}$ and any $\eta\in([0,1]\times X;X)$ satisfying $\eta(t,x)=x$ for all $(t,x)\in ({0}\times X)\cup ([0,1]\times B)$, we have $\eta({1}\times A)\in \mathcal{F}.$
 \end{enumerate}
\end{definition}
\begin{theorem}\label{aa}{\rm (see \cite[Theorem 3.2]{NGB})}
Let $\varphi$ be a $C^{1}$ function on a complete connected $C^{1}$-Finsler manifold X (without boundary) and consider a homotopy-stable family $\mathcal{F}$ of compact subsets of X with a closed boundary B. Set $c=c(\varphi,\mathcal{F})=\inf\limits_{A\in \mathcal{F}}\max\limits_{x\in A}\varphi(x)$ and suppose that $$\sup\varphi(B)<c.$$
Then, for any sequence of sets $(A_{n})_{n}$ in $\mathcal{F}$ such that $\lim\limits_{n}\sup\limits_{A_{n}}\varphi=c,$ there exists a sequence $(x_{n})_{n}$ in X such that

${\rm (i)}$\ $\lim\limits_{n}\varphi(x_{n})=c \ \ ${\rm (ii)}$\ \lim\limits_{n}\|d\varphi(x_{n})\|=0 \ \ ${\rm (iii)}$\ \lim\limits_{n}{\rm dist}(x_{n}, A_{n})=0.$

\noindent Moreover, if $d\varphi$ is uniformly continuous, then $x_{n}$ can be chosen to be in $A_{n}$ for each n.
\end{theorem}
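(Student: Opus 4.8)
This is Ghoussoub's min--max principle, and the plan is to prove it by contradiction, through a quantitative deformation along a pseudo-gradient flow.

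First I would fix the sequence $(A_n)_n\subset\mathcal F$ with $\sup_{A_n}\varphi\to c$ and assume that no sequence $(x_n)_n$ in $X$ satisfies (i)--(iii) simultaneously. The first step is to convert this into a quantitative gradient bound: there are $\varepsilon>0$, $\rho>0$, $\delta>0$ and $n_0\in\mathbb N$ such that $\sup_B\varphi<c-\varepsilon$ and
\[
\|d\varphi(x)\|\ge\delta\qquad\text{for every }x\text{ with }|\varphi(x)-c|\le\varepsilon,\ \operatorname{dist}(x,A_n)\le\rho,\ n\ge n_0 .
\]
If this failed, then by a diagonal selection over shrinking values of the parameters one could extract a sequence that contradicts the hypothesis; keeping track of the \emph{varying} sets $A_n$ in this selection is the point that needs care.

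Next I would build the deformation on the $C^1$-Finsler manifold $X$. On $\{\|d\varphi\|>0\}$ there is a locally Lipschitz pseudo-gradient field $W$, obtained from a Lipschitz partition of unity, with $\|W\|\le2$ and $\langle d\varphi,W\rangle\ge\|d\varphi\|$ on the region above. I would pick a locally Lipschitz cut-off $\chi:X\to[0,1]$ equal to $1$ on $\{|\varphi-c|\le\varepsilon/2\}\cap\{\operatorname{dist}(\cdot,A_n)\le\rho/2\}$ and vanishing outside $\{|\varphi-c|<\varepsilon\}\cap\{\operatorname{dist}(\cdot,A_n)<\rho\}$. By completeness of $X$ the flow $\dot\eta=-\chi(\eta)\,W(\eta)$, $\eta(0,\cdot)=\operatorname{id}$, is globally defined; $\varphi\circ\eta$ is nonincreasing, decreases at rate $\ge\delta$ wherever $\chi>0$, and trajectories have speed $\le2$, so along a trajectory the spatial displacement is at most $2/\delta$ times the drop of $\varphi$. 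Choosing $\varepsilon$ so small that $2\varepsilon/\delta<\rho/2$, a trajectory started on $A_n$ cannot leave $\{\operatorname{dist}(\cdot,A_n)<\rho/2\}$ before $\varphi$ has fallen to $c-\varepsilon/2$; hence for a finite time $T$ the map $\eta(T,\cdot)$ sends $\{\varphi\le c+\varepsilon/2\}\cap\{\operatorname{dist}(\cdot,A_n)\le\rho/2\}$ into $\{\varphi\le c-\varepsilon/2\}$, fixes $\{\varphi\le c-\varepsilon\}$, and --- since $\sup_B\varphi<c-\varepsilon$ forces $\chi\equiv0$ near $B$ --- the whole homotopy is the identity on $[0,1]\times B$. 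Reparametrising $[0,T]$ to $[0,1]$ and taking $n$ large enough that $\max_{A_n}\varphi\le c+\varepsilon/2$, property (b) of the homotopy-stable family yields $\eta(1,A_n)\in\mathcal F$ with $\max_{\eta(1,A_n)}\varphi\le c-\varepsilon/2<c$, contradicting $c=\inf_{A\in\mathcal F}\max_A\varphi$. This produces the sequence $(x_n)_n$ with (i)--(iii).

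I expect the main obstacle to be exactly this calibration: the gradient bound holds only in a bounded collar around $A_n$, so one must make the descending flow travel far enough to drag $\max_{A_n}\varphi$ strictly below $c$ while never escaping that collar and while keeping $B$ fixed, which pins down the relation $\varepsilon\lesssim\rho\delta$ and is where the Finsler (rather than Hilbert) setting costs extra work in the pseudo-gradient construction and the arc-length estimate. For the refined statement when $d\varphi$ is uniformly continuous, I would start from the $x_n$ just obtained, which satisfy $\operatorname{dist}(x_n,A_n)\to0$, choose $\bar x_n\in A_n$ with $\operatorname{dist}(\bar x_n,x_n)\to0$, and use uniform continuity of $d\varphi$ to see that $\|d\varphi\|$ is uniformly small on a short path joining $x_n$ to $\bar x_n$; this gives $\|d\varphi(\bar x_n)\|\to0$ and, by integrating $d\varphi$ along that path, $|\varphi(\bar x_n)-\varphi(x_n)|\to0$, so $\varphi(\bar x_n)\to c$ and $(\bar x_n)_n\subset A_n$ still realises (i)--(iii).
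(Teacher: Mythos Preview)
The paper does not give its own proof of this theorem: it is simply quoted as \cite[Theorem 3.2]{NGB} and used as a black box in Section~\ref{sec6}, so there is no in-paper argument to compare your proposal against. Your sketch is the standard deformation/pseudo-gradient proof of Ghoussoub's min--max principle, and its overall architecture is correct.

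That said, one point in your write-up is not quite right and would need fixing if you were to turn this into a full proof. Your cut-off $\chi$ is defined using $\operatorname{dist}(\cdot,A_n)$, so the vector field and the flow $\eta$ depend on $n$; you should make this explicit, since for each large $n$ you are applying property~(b) of the homotopy-stable family to a \emph{different} homotopy $\eta_n$. Relatedly, the quantitative gradient bound you state has a single triple $(\varepsilon,\rho,\delta)$ valid uniformly for all $n\ge n_0$, and this is exactly what the contradiction hypothesis gives; but your subsequent calibration ``choosing $\varepsilon$ so small that $2\varepsilon/\delta<\rho/2$'' is circular, because $\delta$ and $\rho$ were produced \emph{from} $\varepsilon$ in the contradiction step and cannot be adjusted independently afterwards. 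The clean way is to run the diagonal argument with $\varepsilon_k,\rho_k,\delta_k\to 0$ along a subsequence, extract the contradictory sequence if the bound fails, and otherwise fix one triple for which the needed inequality $4\varepsilon<\rho\delta$ (or whatever constant your speed bound gives) already holds; alternatively, derive the gradient bound first and then shrink $\varepsilon$ alone, checking that the bound persists for smaller $\varepsilon$ since the region only shrinks. Finally, in the last paragraph you need a short path in $X$ joining $x_n$ to $\bar x_n$ whose length is comparable to $\operatorname{dist}(x_n,\bar x_n)$; on a general $C^1$-Finsler manifold this is part of the metric structure, but it should be invoked explicitly.
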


Now we are in a position to prove Theorems \ref{TH2}--\ref{TH3}.

\vspace{1mm}

{\bf Case 1: $L^{2}$-critical perturbation,} i.e.,  $q=\overline{p}$.   Let $k>0$ be defined by Lemma \ref{Lem11}, we follow the ideas  introduced in \cite{Jeanjean} and consider the following functional $\overline{E}_{\mu}(s,\mu): \mathbb{R}\times H^{s}(\mathbb{R}^{N})\rightarrow \mathbb{R}$:
\begin{align}\label{U32}
\widetilde{E}_{\mu}(t,\mu)=E_{\mu}(t\star u)=\left[\frac{1}{2}||u||^{2}_{D_{s}(\mathbb{R}^{N})}-\frac{\mu}{\overline{p}}\int_{\mathbb{R}^{N}} |u|^{\overline{p}}dx\right]e^{2st}-\frac{e^{2_{s}^{\ast}st}}{2_{s}^{\ast}}\int_{\mathbb{R}^{N}} |u|^{2_{s}^{\ast}}dx.
\end{align}
It is easy to see that $\overline{E}_{\mu}(t,\mu)$ is of class of $C^{1}$, since $\overline{E}_{\mu}(t,\mu)$ is invariant under rotations applied to $u$, a Palais-Smale sequence  for $\overline{E}_{\mu}(t,\mu)|_{\mathbb{R}\times S^{r}_{a}}$  is a Palais-Smale sequence  for $\overline{E}_{\mu}(t,\mu)|_{\mathbb{R}\times S_{a}}$. Let $E^{c}$ be the closed sublevel set $\{u\in S_{a}: E_{\mu}\leq c\},$ we introduce the minimax class
\begin{align}\label{U33}
\Gamma:=\{\gamma=(\alpha,\beta)\in C([0,1],\mathbb{R}\times S^{r}_{a}): \gamma(0)\in (0,\overline{A_{k}}),\gamma(1)\in (0,E^{0}) \}
\end{align}
with associated minimax level $$\sigma(a,\mu):=\inf_{\gamma\in \Gamma}\max_{(t,u)\in \gamma([0,1])}\widetilde{E}_{\mu}(t,\mu).$$
 Since $||t\star u||^{2}_{D_{s}(\mathbb{R}^{N})}\rightarrow 0^{+}$ as $t\rightarrow -\infty$ and $E_{\mu}(t\star u)\rightarrow -\infty$ as $t\rightarrow +\infty$. Let $u\in S^{r}_{a}$. There exist $t_{0}\ll -1$ and $t_{1}\gg 1$ such that
\begin{align}\label{U34}
\gamma_{u}: \tau\in [0,1]\rightarrow (0,((1- \tau)t_{0}+\tau t_{1})\star u)\in \mathbb{R}\times S^{r}_{a}
\end{align}
is a path in $\Gamma$. Then $\sigma(a,\mu)$ is a real number.  For any $\gamma=(\alpha,\beta)\in \Gamma$, let us consider the function $$P_{\gamma}:\tau\in [0,1]\rightarrow P(\alpha(\tau)\star \beta(\tau))\in \mathbb{R}.$$ We have $P_{\gamma}(0)=P(\beta(0))>0$, by Lemmas \ref{Lem10} and  \ref{Lem11}. Since $\Psi_{\beta(1)}(t)>0$ for every $t\in (-\infty, t_{\beta(1)})$ and $\Psi_{\beta(1)}(0)=E_{\mu}(\beta(1))\leq0,$ we have that $t_{\beta(1)}<0.$ Thus, by Lemma \ref{Lem8}, we have  $P_{\gamma}(1)=P(\beta(1))<0.$ Moreover, the map $\tau:\alpha(\tau)\star \beta (\tau)$ is continuous from $[0,1]$ to $H^{s}(\mathbb{R}^{N})$, hence we deduce that there exists  $\tau_{\gamma}\in (0,1)$ such that $P_{\gamma}(\tau_{\gamma})=0$, namely $\alpha(\tau_{\gamma})\star \beta(\tau_{\gamma})\in \mathcal{P}_{a,\mu} $, this implies that $$\max_{\gamma([0,1])}\widetilde{E}_{\mu}\geq \widetilde{E}_{\mu}(\gamma(\tau_{\gamma}))=E_{\mu}(\alpha(\tau_{\gamma})\star \beta(\tau_{\gamma}))\geq \inf_{\mathcal{P}_{a,\mu}\cap S^{r}_{a}}E_{\mu}=m_{r}(a,\mu).$$ Consequently, $\sigma(a,\mu)\geq m_{r}(a,\mu)$. On the other hand, if $u\in \mathcal{P}_{a,\mu} \cap S^{r}_{a}$, then $\gamma_{u}$ defined in \eqref{U34} ia s path in $\Gamma$ with $$E_{\mu}(u)=\max_{\gamma_{u}([0,1])}\widetilde{E}_{\mu}\geq \sigma(a,\mu),$$ which implies that $$m_{r}(a,\mu)\geq \sigma(a,\mu).$$ Combining  this  with Lemmas \ref{Lem10}--\ref{Lem11}, we have that $$\sigma(a,\mu)=m_{r}(a,\mu)>\sup_{(\overline{A_{k}}\cup E^{0})\cap S^{r}_{a}}E_{\mu}=\sup_{((0,\overline{A_{k}})\cup (0,E^{0}))\cap (\mathbb{R}\times S^{r}_{a})}\widetilde{E}_{\mu}.$$
By Theorem \ref{aa}, we know that $\{\gamma([0,1]): \gamma\in \Gamma\}$ is a homotopy stable family of compact subsets of $\mathbb{R}\times S^{r}_{a}$ with closed boundary $(0, \overline{A}_{k})\cup (0,E^{0})$ and the superlevel set $\{\widetilde{E}\geq \sigma(a,\mu)\}$ is a dual set for $\Gamma$. By Theorem \ref{aa} we can taking any minimizing sequence $\{\gamma_{n}=(\alpha_{n},\beta_{n})\}\subset \Gamma_{n}$ for $\sigma(a,\mu)$ with the property  that $\alpha_{n}=0$ and $\beta_{n}(\tau)\geq 0$ a.e in $\mathbb{R}^{N}$, there exists a Palais-Smale sequence $\{(t_{n},w_{n})\}\subset \mathbb{R}\times S^{r}_{a}$ for $\widetilde{E}|_{\mathbb{R}\times S^{r}_{a}}$ at level $\sigma(a,\mu)$, that is
\begin{align}\label{U35}
\partial_{t}\widetilde{E}_{\mu} (t_{n},w_{n}) \rightarrow 0\  \text{and}\ \ \|\partial_{u}\widetilde{E}_{\mu} (t_{n},w_{n})\|\rightarrow 0 \ \text{as}\ \ n\rightarrow +\infty.
\end{align}
with the additional property that
\begin{align}\label{U36}
|t_{n}|+{\rm dist}_{H^{s}} (w_{n},\beta_{n}([0,1]))\rightarrow 0 \ \text{as}\ \ n\rightarrow +\infty.
\end{align}
By the definition of $\widetilde{E}_{\mu} (t_{n},w_{n})$ in \eqref{U32}, from \eqref{U35} we know that $P (t_{n},w_{n}) \rightarrow 0$,
that is
\begin{align}\label{U37}
dE_{\mu} (t_{n}\star w_{n})[t_{n}\star \varphi]=o(1) \|\varphi\|=o(1) \|t_{n}\star \varphi\| \ \text{as}\ \ n\rightarrow +\infty\ \text{for every} \ \varphi\in T_{w_{n}}S^{r}_{a}.
\end{align}
Let $u_{n}=t_{n}\star w_{n}$, by \eqref{U37}, we know that $\{u_{n}\}$  is a Palais-Smale sequence for $E_{\mu}|_{S^{r}_{a}}$ at the level $\sigma(a,\mu)=m_{r}(a,\mu)$ and $P(u_{n})\rightarrow 0$. Thus, by Lemmas \ref{Lem10}--\ref{Lem16}, we obtain that $m_{r}(a,\mu)\in (0, \frac{s}{N}S^{\frac{N}{2s}}_{s})$, so by Proposition \ref{LM5}, one of the alternatives occurs. Assume (i) occurs in Proposition \ref{LM5}, then up to a subsequence $u_{n}\rightharpoonup \widetilde{u}$ weakly in $H^{s}(\mathbb{R}^{N})$ but not strongly, where $\widetilde{u}\not\equiv0$ is a solution of \eqref{int1} for some $\lambda<0$, and $$E_{\mu}(\widetilde{u})\leq m(a,\mu)- \frac{s}{N}S^{\frac{N}{2s}}_{s}<0.$$ Hence by the Pohozaev identity,  $P(\widetilde{u})=0$ holds, which implies that $$||\widetilde{u}||^{2}_{D_{s}(\mathbb{R}^{N})}
-\frac{2\mu}{\overline{p}}\int_{\mathbb{R}^{N}}|\widetilde{u}|^{\overline{p}}dx-\int_{\mathbb{R}^{N}}|\widetilde{u}|^{2_{s}^{\ast}}dx=0.$$
Thus
\begin{align*}
E_{\mu}(u)=\frac{1}{2}||\widetilde{u}||^{2}_{D_{s}(\mathbb{R}^{N})}
-\frac{2\mu}{\overline{p}}\int_{\mathbb{R}^{N}}|\widetilde{u}|^{\overline{p}}dx-\frac{1}{2_{s}^{\ast}}\int_{\mathbb{R}^{N}}|\widetilde{u}|^{2_{s}^{\ast}}dx=\frac{s}{N}\int_{\mathbb{R}^{N}}|\widetilde{u}|^{2_{s}^{\ast}}dx>0,
\end{align*}
which contradicts the fact that $$E_{\mu}(\widetilde{u})\leq m(a,\mu)- \frac{s}{N}S^{\frac{N}{2s}}_{s}<0.$$
Therefore, the alternative (ii) in Proposition \ref{LM5} holds. There exists a  subsequence $u_{n}\rightarrow \widetilde{u}$ strongly in $H^{s}(\mathbb{R}^{N}),$ $E_{\mu}(\widetilde{u})=m(a,\mu)$ and $\widetilde{u}$ solves \eqref{int1}--\eqref{in2} for some $\lambda<0.$ By $\beta_{n}(\tau)\geq 0$ a.e in $\mathbb{R}^{N}$, \eqref{U36} and the convergence implies that $\widetilde{u}\geq 0$, by the strong maximum principle for the fractional Laplacian, see Proposition 2.17 in \cite{LS}, we have $\overline{u}$  is positive. Finally, we prove that $\widetilde{u}$ is a ground state solution. Since any normalized solutions in $\mathcal{P}_{a,\mu}$ and $$E_{\mu}(\overline{u})=m_{r}(a,\mu)=\inf_{\mathcal{P}_{a,\mu}\cap S_{a}}E_{\mu}.$$ It is sufficient to show that $$\inf_{\mathcal{P}_{a,\mu}\cap S_{a}}E_{\mu}=\inf_{\mathcal{P}_{a,\mu}}E_{\mu}=m(a,\mu).$$Assume by contradiction that there is a $u\in \mathcal{P}_{a,\mu}\setminus S^{r}_{a}$  such that $E_{\mu}(u)<\inf_{\mathcal{P}_{a,\mu}\cap S_{a}}E_{\mu}$  and there exists a minimizer $u$, let $v=|u|^{\ast}$ the symmetric decreasing rearrangement of $u$. Then by the properties of symmetric decreasing rearrangement, we have $$||v||^{2}_{D_{s}(\mathbb{R}^{N})}\leq ||u||^{2}_{D_{s}(\mathbb{R}^{N})},\ E_{\mu}(v)\leq E_{\mu}(u)\ \text{and}\ P_{\mu}(v)\leq 0=P_{\mu}(u).$$If $P_{\mu}(v)=0,$ then $P_{\mu}(v)=P_{\mu}(v)=0,$ which is a contradiction with above inequalities.  If $P_{\mu}(v)<0,$ then by Lemma \ref{Lem8}, we know that $t_{v,\mu}<0$, thus $$E_{\mu}(u)\leq E_{\mu}(t_{v,\mu}\star u)=\frac{se^{2_{s}^{\star}st_{v,\mu}}}{N}\int_{\mathbb{R}^{N}}|v|^{2_{s}^{\ast}}dx= \frac{se^{2_{s}^{\star}st_{v,\mu}}}{N}\int_{\mathbb{R}^{N}}|v|^{2_{s}^{\ast}}dx=e^{2_{s}^{\star}st_{v,\mu}}E_{\mu}(u)<E_{\mu}(u),$$  which is a contraction. Thus $$m(a,\mu)=m_{r}(a,\mu)$$ and hence $\widetilde{u}$ is a ground state solution.

\vspace{1mm}

{\bf Case 2: $L^{2}$-supercritical perturbation,} i.e.,  $2+4s/N<q<2_{s}^{\ast}$.   Proceeding exactly as in the case $q=\overline{p}$, we obtain a Palais-Smale sequence $\{u_{n}\}\subset S^{r}_{a}$ for $E_{\mu}|_{S_{a}}$ at the level $\sigma(a,\mu)=m_{r}(a,\mu)$ and $P(u_{n})\rightarrow 0$. Thus, by Lemma \ref{Lem17}, we obtain that $m_{r}(a,\mu)\in (0, \frac{s}{N}S^{\frac{N}{2s}}_{s})$, so by Proposition \ref{LM5}, one of the alternatives occurs. Assume (i) occurs in Proposition \ref{LM5}, then up to a subsequence $u_{n}\rightharpoonup \widetilde{u}$ weakly in $H^{s}(\mathbb{R}^{N})$ but not strongly, where $\widetilde{u}\not\equiv0$ is a solution of \eqref{int1} for some $\lambda<0$, and $$E_{\mu}(\widetilde{u})\leq m(a,\mu)- \frac{s}{N}S^{\frac{N}{2s}}_{s}<0,$$ hence by the Pohozaev identity  $P(\widetilde{u})=0$ holds, which implies that $$||\widetilde{u}||^{2}_{D_{s}(\mathbb{R}^{N})}-\mu\gamma_{q,s}\int_{\mathbb{R}^{N}} |\widetilde{u}|^{q}dx-\int_{\mathbb{R}^{N}} |\widetilde{u}|^{2_{s}^{\ast}}dx=0,$$
thus, by $q\gamma_{q,s}>2$, we have
\begin{align*}
E_{\mu}(u)&=\frac{1}{2}||\widetilde{u}||^{2}_{D_{s}(\mathbb{R}^{N})}
-\frac{\mu}{q}\int_{\mathbb{R}^{N}}|\widetilde{u}|^{q}dx-\frac{1}{2_{s}^{\ast}}\int_{\mathbb{R}^{N}}|\widetilde{u}|^{2_{s}^{\ast}}dx\\
&=\frac{\mu}{q}\left(\frac{q\gamma_{q,s}}{2}-1\right)\int_{\mathbb{R}^{N}}|\widetilde{u}|^{q}dx+\frac{s}{N}\int_{\mathbb{R}^{N}}|\widetilde{u}|^{2_{s}^{\ast}}dx>0,
\end{align*}
which contradicts the fact that $$E_{\mu}(\widetilde{u})\leq m(a,\mu)- \frac{s}{N}S^{\frac{N}{2s}}_{s}<0.$$Therefore, the alternative (ii) in Proposition \ref{LM5} holds. There exists a  subsequence $u_{n}\rightarrow \widetilde{u}$ strongly in $H^{s}(\mathbb{R}^{N}),$ $E_{\mu}(\widetilde{u})=m(a,\mu)$ and $\widetilde{u}$ solves \eqref{int1}--\eqref{in2} for some $\lambda<0.$ By $\beta_{n}(\tau)\geq 0$ a.e in $\mathbb{R}^{N}$, \eqref{U36} and the convergence implies that $\widetilde{u}\geq 0$, by the strong maximum principle for fractional Laplacian
(see Proposition 2.17 in \cite{LS}), we have $\overline{u}$  is positive. The next arguments are the same as {\bf case 1}.
This completes the proof.
\section{Proof of Theorem \ref{Lem1}}\label{Lab5}
\begin{proof}[Proof of Theorem \ref{Lem1}]
If we focus on the case $\mu=0$, then
\begin{align*}
E_{0}(u)&=\frac{1}{2}||u||^{2}_{D_{s}(\mathbb{R}^{N})}
-\frac{1}{2_{s}^{\ast}}\int_{\mathbb{R}^{N}}|u|^{2_{s}^{\ast}}dx
\end{align*}
on $S_{a}$. The associated Pohozaev manifold is $$\mathcal{P}_{a,0}=\bigg\{u\in S_{a}:s||u||^{2}_{D_{s}(\mathbb{R}^{N})}=s\int_{\mathbb{R}^{N}} |u|^{2_{s}^{\ast}}dx\bigg\}=\bigg\{u\in S_{a}:(\Psi^{0}_{u})'(0) =0\bigg\},$$
where \begin{equation*}
 \Psi^{0}_{u}(t)=\frac{e^{2st}}{2}||u||^{2}_{D_{s}(\mathbb{R}^{N})}-\frac{e^{2_{s}^{\ast}st}}{2_{s}^{\ast}}\int_{\mathbb{R}^{N}} |u|^{2_{s}^{\ast}}dx.
\end{equation*}
Recall the decomposition $$\mathcal{P}_{a,0}=\mathcal{P}^{+}_{a,0}\cup \mathcal{P}^{0}_{a,0}\cup \mathcal{P}^{-}_{a,0}.$$
Since \begin{equation*}
 \Psi^{0}_{u}(t)=\frac{e^{2st}}{2}||u||^{2}_{D_{s}(\mathbb{R}^{N})}-\frac{e^{2_{s}^{\ast}st}}{2_{s}^{\ast}}\int_{\mathbb{R}^{N}} u^{2_{s}^{\ast}}dx.
\end{equation*}
It is easy to see that for every $u\in S_{a}$, the function $\Psi^{0}_{u}(t)$ has a unique critical point $t_{u,0}$, which is  a strict maximum point and is given by
\begin{align}\label{U1}
e^{st_{u,0}}=\left(\frac{||u||^{2}_{D_{s}(\mathbb{R}^{N})}}{\int_{\mathbb{R}^{N}} |u|^{2_{s}^{\ast}}dx}\right)^{\frac{1}{2_{s}^{\ast}-2}}.
\end{align}
By the definition of $\mathcal{P}^{+}_{a,0}$, we know that $\mathcal{P}^{+}_{a,0}=\emptyset$. If $u\in \mathcal{P}^{0}_{a,0}$, then $u\in \mathcal{P}_{a,0} $ and $ (\Psi^{\mu}_{u})''(0) =0$, which implies that $$2||u||^{2}_{D_{s}(\mathbb{R}^{N})}=2_{s}^{\ast}\int_{\mathbb{R}^{N}} |u|^{2_{s}^{\ast}}dx=2_{s}^{\ast}||u||^{2}_{D_{s}(\mathbb{R}^{N})}\Rightarrow ||u||_{D_{s}(\mathbb{R}^{N})}=0,$$ which is not possible since $u\in S_{a}$. Then $\mathcal{P}_{a,0}=\mathcal{P}^{-}_{a,0}$.

Next, we show that $\mathcal{P}_{a,0}$ is a smooth manifold of codimension 1 on $S_{a}$. Since $$\mathcal{P}_{a,0}=\bigg\{u\in S_{a}:||u||^{2}_{D_{s}(\mathbb{R}^{N})}=\int_{\mathbb{R}^{N}} |u|^{2_{s}^{\ast}}dx\bigg\},$$ we know that $\mathcal{P}_{a,0}$ is defined by $P_{0}(u)=0$, $G(u)=0$, where $$P_{0}(u)=s||u||^{2}_{D_{s}(\mathbb{R}^{N})}-s\int_{\mathbb{R}^{N}} |u|^{2_{s}^{\ast}}dx \ \ \text{and}\ \ G(u)=\int_{\mathbb{R}^{N}} |u|^{2}dx=a^{2} .$$ Since $P_{0}(u)$ and $G(u)$ are class of $C^{1}$, we only need to check that $d(P_{0}(u),G(u))$: $H^{s}(\mathbb{R}^{N})\rightarrow \mathbb{R}^{2}$ is surjective. If this not true, $dP_{0}(u)$ has to be linearly dependent from $dG(u)$ i.e. there exist a $\nu\in \mathbb{R}$ such that $$2s\int_{\mathbb{R}^{N}} (-\Delta)^{\frac{s}{2}}u(-\Delta)^{\frac{s}{2}}\varphi dx-s2_{s}^{\ast}\int_{\mathbb{R}^{N}} |u|^{2_{s}^{\ast}-2}u\varphi dx=\nu\int_{\mathbb{R}^{N}} u\varphi dx\ \ \text{for every } \ \ \varphi\in H^{s}(\mathbb{R}^{N}),$$ which implies that $$2s(-\Delta)^{2}u=\nu u+2_{s}^{\ast}s u^{2_{s}^{\ast}-1}\ \ \text{in}\ \mathbb{R}^{N}.$$
By the Pohozaev identity for above equation, we know that $$2s||u||^{2}_{D_{s}(\mathbb{R}^{N})}=2_{s}^{\ast}s\int_{\mathbb{R}^{N}} |u|^{2_{s}^{\ast}}dx,$$
that is, $u\in \mathcal{P}^{+}_{a,0}$, a contradiction. Hence $\mathcal{P}_{a,0}$ is a natural constraint.

Indeed, if $u\in \mathcal{P}_{a,0}$ is a critical point of $E_{0}|_{\mathcal{P}_{a,0}}$, then $u$ is a critical point of $E_{0}|_{S_{a}}$.
 Thus, for every $u\in S_{a}$ there exist a unique $t_{u,0}\in \mathbb{R}$ such that $t_{u,0}\star u \in \mathcal{P}_{a,0} $  and $t_{u,0}$ is a strict maximum point of $\Psi^{0}_{u}(t)$, if $u\in \mathcal{P}_{a,0}$, we have that $t_{u,0}=0$ and $$E_{0}(u)=\max_{t\in \mathbb{R}}E_{0}(t\star u)\geq \inf_{v\in S_{a}} \max_{t\in \mathbb{R}}E_{0}(t\star u).$$ On the other hand, if $u\in S_{a},$ then $t_{u,0}\star u \in\mathcal{P}_{a,0} $, so $$\max_{t\in \mathbb{R}}E_{0}(t\star u)=E_{0}(t_{u,0}\star u)\geq \inf_{v\in \mathcal{P}_{a,0}} E_{0}(v).$$ Thus $$\inf_{u\in \mathcal{P}_{a,0}} E_{0}(u)=\inf_{u\in S_{a}} \max_{t\in \mathbb{R}}E_{0}(t\star u).$$
 Now, by \eqref{U1}, we have
 \begin{align*}
 \inf_{u\in \mathcal{P}_{a,0}} E_{0}(u)&=\inf_{u\in S_{a}} \max_{t\in \mathbb{R}}E_{0}(t\star u)\\
 &=\inf_{u\in S_{a}} \left[\frac{1}{2}\left(\frac{||u||^{2}_{D_{s}(\mathbb{R}^{N})}}{\int_{\mathbb{R}^{N}} |u|^{2_{s}^{\ast}}dx}\right)^{\frac{2}{2_{s}^{\ast}-2}}||u||^{2}_{D_{s}(\mathbb{R}^{N})}-\frac{1}{2_{s}^{\ast}}\left(\frac{||u||^{2}_{D_{s}(\mathbb{R}^{N})}}{\int_{\mathbb{R}^{N}} |u|^{2_{s}^{\ast}}dx}\right)^{\frac{2_{s}^{\ast}}{2_{s}^{\ast}-2}}\int_{\mathbb{R}^{N}} |u|^{2_{s}^{\ast}}dx\right]\\
 &=\inf_{u\in S_{a}} \frac{s}{N}\left(\frac{||u||^{2}_{D_{s}(\mathbb{R}^{N})}}{\left(\int_{\mathbb{R}^{N}} |u|^{2_{s}^{\ast}}dx\right)^{\frac{2}{2_{s}^{\ast}}}}\right)^{\frac{2_{s}^{\ast}}{2_{s}^{\ast}-2}}=\inf_{u\in H^{s}(\mathbb{R}^{N})\setminus \{0\}} \frac{s}{N}\left(\frac{||u||^{2}_{D_{s}(\mathbb{R}^{N})}}{\left(\int_{\mathbb{R}^{N}} |u|^{2_{s}^{\ast}}dx\right)^{\frac{2}{2_{s}^{\ast}}}}\right)^{\frac{N}{2s}}.
 \end{align*}
 So it follows that
  $$\inf_{u\in H^{s}(\mathbb{R}^{N})\setminus \{0\}} \frac{s}{N}\left(\frac{||u||^{2}_{D_{s}(\mathbb{R}^{N})}}{\left(\int_{\mathbb{R}^{N}} |u|^{2_{s}^{\ast}}dx\right)^{\frac{2}{2_{s}^{\ast}}}}\right)^{\frac{N}{2s}}=\frac{s}{N}S^{\frac{N}{2s}}_{s}$$
     and the infimum is achieved if and only if by the extremal functions $U_{\epsilon,y}$ defined in \eqref{G1} when $N>4s$ and stay in $L^{2}(\mathbb{R}^{N})$. If $2s<N\leq 4s$, we show that the infimum  of $E_{0}$ on $\mathcal{P}_{a,0}$ is not achieved. Assume by contradiction that there exists a minimizer $u$, let $v=|u|^{\ast}$ the symmetric decreasing rearrangement of $u$. Then by the properties of symmetric decreasing rearrangement, we have $$||v||^{2}_{D_{s}(\mathbb{R}^{N})}\leq ||u||^{2}_{D_{s}(\mathbb{R}^{N})},\ E_{0}(v)\leq E_{0}(u)\ \text{and}\ P_{0}(v)\leq 0=P_{0}(u).$$ If $P_{0}(v)<0,$ then by \eqref{U1}, we know that $t_{v,0}<0$, thus $$E_{0}(u)\leq E_{0}(t_{v,0}\star u)=\frac{se^{2st_{v,0}}}{N}||v||^{2}_{D_{s}(\mathbb{R}^{N})}\leq \frac{se^{2st_{v,0}}}{N}||u||^{2}_{D_{s}(\mathbb{R}^{N})}=e^{2st_{v,0}}E_{0}(u)<E_{0}(u),$$ which is a contradiction. Thus $P_{0}(v)=0\Rightarrow v\in \mathcal{P}_{a,0} $. Since $\mathcal{P}_{a,0}$ is a natural constraint, we obtain
 \begin{align}\label{U2}
 (-\Delta)^{s}v=\lambda v +v^{2_{s}^{\ast}-1}, \ v\geq 0\ \text{in}\ \mathbb{R}^{N},
 \end{align}
 for some $\lambda\in \mathbb{R}$. Since $P_{0}(v)=0$, which implies that $\lambda=0$. By the strong maximum principle, we have $v>0$ in $\mathbb{R}^{N}$. From \cite{JLX}, we know that $v=\alpha U_{\epsilon,0}$ for some $\alpha,\epsilon>0$, this is not possible, since $U_{\epsilon,0}\notin H^{s}(\mathbb{R}^{N})$ for $2s<N\leq 4s.$
The proof is thus complete.
\end{proof}
\section{Proof of Theorem \ref{TH4}}\label{sec7}
In this section, we prove Theorem \ref{TH4}. Before the proof, we give some lemmas.
\begin{lemma}\label{Lem18}
Let $a>0$ , $\mu\geq 0,\ \overline{p}\leq q< 2_{s}^{\ast}$ and  \eqref{equ4} holds. Then $$\inf_{u\in \mathcal{P}_{a,\mu} }E_{\mu}(u)=\inf_{u\in S_{a} }\max_{t\in \mathbb{R}}E_{\mu}(t\star u).$$
\end{lemma}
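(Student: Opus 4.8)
The plan is to prove the identity by establishing two opposite inequalities, each of which follows immediately from the structure of the scaling fiber map $\Psi^{\mu}_{u}(t)=E_{\mu}(t\star u)$ analysed in Sections \ref{sec3}--\ref{sec4}. Recall that in both regimes covered by the statement one has, for every $u\in S_{a}$, a unique $t_{u}\in\mathbb{R}$ with $t_{u}\star u\in\mathcal{P}_{a,\mu}$, and moreover $t_{u}$ is the unique critical point of $\Psi^{\mu}_{u}$ and is a strict global maximum point: this is Lemma \ref{Lem13} when $\overline{p}<q<2_{s}^{\ast}$, and Lemma \ref{Lem8} when $q=\overline{p}$ (the latter being applicable thanks to the smallness hypothesis of the present lemma, which forces $\frac{1}{2}\|u\|^{2}_{D_{s}(\mathbb{R}^{N})}-\frac{\mu}{\overline{p}}\int_{\mathbb{R}^{N}}|u|^{\overline{p}}dx>0$). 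For the borderline case $\mu=0$ the same facts hold with the explicit maximiser $e^{st_{u,0}}=\bigl(\|u\|^{2}_{D_{s}(\mathbb{R}^{N})}/\int_{\mathbb{R}^{N}}|u|^{2_{s}^{\ast}}dx\bigr)^{1/(2_{s}^{\ast}-2)}$, exactly as computed in the proof of Theorem \ref{Lem1}.

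First I would show $\inf_{\mathcal{P}_{a,\mu}}E_{\mu}\ \ge\ \inf_{S_{a}}\max_{t\in\mathbb{R}}E_{\mu}(t\star\cdot)$. Pick $u\in\mathcal{P}_{a,\mu}$. Since $(\Psi^{\mu}_{u})'(0)=P_{\mu}(u)=0$, the point $t=0$ is a critical point of $\Psi^{\mu}_{u}$; by uniqueness of the critical point this forces $t_{u}=0$, which is a strict global maximum of $\Psi^{\mu}_{u}$. Hence
$$E_{\mu}(u)=\Psi^{\mu}_{u}(0)=\max_{t\in\mathbb{R}}\Psi^{\mu}_{u}(t)=\max_{t\in\mathbb{R}}E_{\mu}(t\star u)\ \ge\ \inf_{v\in S_{a}}\max_{t\in\mathbb{R}}E_{\mu}(t\star v),$$
and taking the infimum over $u\in\mathcal{P}_{a,\mu}$ gives the inequality.

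For the reverse inequality, fix any $u\in S_{a}$. By the quoted lemmas there is $t_{u}\in\mathbb{R}$ with $t_{u}\star u\in\mathcal{P}_{a,\mu}$, and since $t_{u}$ is the strict global maximum point of $\Psi^{\mu}_{u}$,
$$\max_{t\in\mathbb{R}}E_{\mu}(t\star u)=\Psi^{\mu}_{u}(t_{u})=E_{\mu}(t_{u}\star u)\ \ge\ \inf_{v\in\mathcal{P}_{a,\mu}}E_{\mu}(v).$$
Taking the infimum over $u\in S_{a}$ and combining with the first step yields the claimed equality. I do not expect a genuine obstacle here: the statement is a soft consequence of the monotonicity and concavity properties of $\Psi^{\mu}_{u}$ already established, and the only thing to be careful about is checking that the dichotomy ``unique critical point, and it is a strict maximum'' is indeed available in each of the three sub-cases $q=\overline{p}$ with $\mu>0$, $\overline{p}<q<2_{s}^{\ast}$ with $\mu>0$, and $\mu=0$ — the first of these being exactly where the smallness of $\mu a^{4s/N}$ (which is implied by the hypothesis \eqref{equ4} when $q=\overline{p}$) is used.
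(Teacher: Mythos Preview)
Your proof is correct and follows essentially the same approach as the paper: both establish the equality via the two opposite inequalities, each obtained from the fact (Lemmas \ref{Lem8}, \ref{Lem13}, and the $\mu=0$ analysis in Theorem \ref{Lem1}) that for every $u\in S_{a}$ the fiber map $\Psi^{\mu}_{u}$ has a unique critical point, which is a strict global maximum and lands $t_{u}\star u$ in $\mathcal{P}_{a,\mu}$. Your write-up is in fact slightly more careful than the paper's in spelling out the three sub-cases and where the smallness hypothesis enters.
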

\begin{proof}
Since $\overline{p}\leq q< 2_{s}^{\ast}$ and $\mu\geq0$, by  Lemma \ref{Lem8} and Lemma \ref{Lem13}, we know that $\mathcal{P}_{a,\mu}=\mathcal{P}^{-}_{a,\mu}$, for every $u\in S_{a}$, there is a unique $t_{u,\mu}\in \mathbb{R}$ such that $t_{u,\mu}\star u \in \mathcal{P}_{a,\mu},$ where $t_{u,\mu}$ is the unique critical point of the function of $\Psi^{\mu}_{u}$ (see Proposition \ref{Lem1} for $\mu=0$). So, if $u\in \mathcal{P}_{a,\mu},$  we have that $t_{u,\mu}=0$ and $$E_{\mu}(u)=\max_{t\in \mathbb{R}}E_{\mu}(t\star u)\geq \inf_{v\in S_{a}}\max_{t\in \mathbb{R}}E_{\mu}(t\star v) .$$
On the other hand, if $u\in S_{a},$ then $t\star u \in \mathcal{P}_{a,\mu}$ and hence $$\max_{t\in \mathbb{R}}E_{\mu}(t\star u)=E_{\mu}(t_{u,\mu}\star u)\geq \inf_{v\in \mathcal{P}_{a,\mu}}E_{\mu}( v).$$
This ends the proof.
\end{proof}
\begin{lemma}\label{Lem19}
Let $a>0$, $\overline{p}\leq q< 2_{s}^{\ast}$,\ $\widetilde{\mu}\geq 0$ satisfy \eqref{equ4} holds. Then the function $\mu\in[0,\widetilde{\mu}]\rightarrow m(a,\mu)\in \mathbb{R}$ is monotone non-increasing.
\end{lemma}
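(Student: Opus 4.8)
The plan is to run everything through the variational characterization
$m(a,\mu)=\inf_{u\in S_a}\max_{t\in\mathbb{R}}E_\mu(t\star u)$ furnished by Lemma~\ref{Lem18}, combined with the elementary monotonicity of the map $\mu\mapsto E_\mu(v)$ at a fixed $v$. First I would observe that the left-hand side $\mu a^{q(1-\gamma_{q,s})}$ of \eqref{equ4} is increasing in $\mu$, so that if \eqref{equ4} holds at $\widetilde\mu$ it holds for every $\mu\in[0,\widetilde\mu]$; consequently all the structural facts proved in Sections~\ref{sec3}--\ref{sec4} (for $\mu>0$) and in the proof of Theorem~\ref{Lem1} (for $\mu=0$) are available at each such $\mu$. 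In particular, for every $u\in S_a$ and every $\mu\in[0,\widetilde\mu]$ the fiber map $\Psi^{\mu}_u$ has a unique critical point $t_{u,\mu}$, which is its strict global maximum, so $\max_{t\in\mathbb{R}}E_\mu(t\star u)=E_\mu(t_{u,\mu}\star u)$ is attained, $\mathcal{P}_{a,\mu}=\mathcal{P}^-_{a,\mu}\neq\emptyset$, and $m(a,\mu)=\inf_{\mathcal{P}_{a,\mu}}E_\mu$ is a well-defined real number (finiteness coming from Lemmas~\ref{Lem10} and~\ref{Lem14}, respectively from the computation in the proof of Theorem~\ref{Lem1} when $\mu=0$).

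Next I would fix $0\le\mu_1<\mu_2\le\widetilde\mu$ and carry out the comparison. The key pointwise observation is that for every $v\in H^s(\mathbb{R}^N)$,
\begin{equation*}
E_{\mu_2}(v)-E_{\mu_1}(v)=\frac{\mu_1-\mu_2}{q}\int_{\mathbb{R}^N}|v|^q\,dx\le 0,
\end{equation*}
since $\mu_1\le\mu_2$ and the integral is nonnegative. Now take an arbitrary $u\in S_a$, let $t_{u,\mu_2}$ be the maximum point of $\Psi^{\mu_2}_u$, and set $v_u:=t_{u,\mu_2}\star u\in\mathcal{P}_{a,\mu_2}\cap S_a$. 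Then
\begin{equation*}
m(a,\mu_2)\le E_{\mu_2}(v_u)\le E_{\mu_1}(v_u)\le\max_{t\in\mathbb{R}}E_{\mu_1}(t\star u),
\end{equation*}
where the first inequality uses $v_u\in\mathcal{P}_{a,\mu_2}$ together with Lemma~\ref{Lem18}, the second is the displayed monotonicity applied to $v=v_u$, and the last is immediate. Taking the infimum over $u\in S_a$ and invoking Lemma~\ref{Lem18} once more gives $m(a,\mu_2)\le m(a,\mu_1)$, which is precisely the asserted non-increasing behaviour.

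Since the argument is purely variational and uses only inequalities already at hand, no serious obstacle is expected; the only points deserving a word of care are (i) checking that \eqref{equ4} propagates to all smaller $\mu$, so that Lemma~\ref{Lem18} applies on the whole interval $[0,\widetilde\mu]$, and (ii) ensuring the suprema in $t$ are genuinely attained, i.e.\ that $\mathcal{P}_{a,\mu}=\mathcal{P}^-_{a,\mu}\neq\emptyset$ for each $\mu$ — both supplied by the lemmas of Sections~\ref{sec3}--\ref{sec4} and by Theorem~\ref{Lem1}. I would also note in passing that if $m(a,\mu_1)$ is attained by some $u_1\in\mathcal{P}_{a,\mu_1}$ (hence with $\int_{\mathbb{R}^N}|u_1|^q\,dx>0$ since $u_1\in S_a$ and, when $q>\overline p$, $\|u_1\|_{D_s}>0$), then running the same chain of inequalities with $u=u_1$ yields the strict inequality $m(a,\mu_2)<m(a,\mu_1)$; however, this refinement is not required for the statement.
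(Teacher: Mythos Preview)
Your proposal is correct and follows essentially the same route as the paper's own proof: both invoke the minimax characterization of Lemma~\ref{Lem18} and then use the pointwise inequality $E_{\mu_2}(v)\le E_{\mu_1}(v)$ (for $\mu_1\le\mu_2$) applied at $v=t_{u,\mu_2}\star u$, followed by taking the infimum over $u\in S_a$. Your write-up is in fact a bit more careful than the paper's, since you explicitly check that the structural hypothesis propagates to all $\mu\in[0,\widetilde\mu]$ and that the fiber maxima are attained.
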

\begin{proof}
Let $0\leq \mu_{1}\leq \mu_{2}\leq \widetilde{\mu}$, by Lemma \ref{Lem18}, we know that
\begin{align*}
m(a,\mu_{2})&=\inf_{u\in S_{a} }\max_{t\in \mathbb{R}}E_{\mu_{2}}(t\star u)=\inf_{u\in S_{a} }E_{\mu_{2}}(t_{u,\mu_{2}}\star u)\\
&=\inf_{u\in S_{a} }\left[E_{\mu_{1}}(t_{u,\mu_{2}}\star u)+(\mu_{1}-\mu_{2})\frac{e^{q\gamma_{q,s}st}}{q}\int_{\mathbb{R}^{N}} |u|^{q}dx\right]\\
&\leq \inf_{u\in S_{a} }\max_{t\in \mathbb{R}}E_{\mu_{1}}(t\star u)=m(a,\mu_{1}).
\end{align*}
As desired.
\end{proof}
\begin{proof}[Proof of Theorem \ref{TH4}]
We divide the proof into two cases.

\smallskip

{\bf Case 1:} $2<q<\overline{p}$. Since $u_{\mu}$ is a  positive ground state solution of $E_{\mu}(u)$ on $\{u\in S_{a}: ||u_{\mu}||^{2}_{D_{s}(\mathbb{R}^{N})}< R_{0}\}$, where $R_{0}(a,\mu)$ is defined by  Lemma \ref{Lem2},  since $R_{0}$ is defined by $h(R_{0})=0$, see $h$ in \eqref{equ5},  we can check that $R_{0}=R_{0}(a,\mu)\rightarrow 0$  as $\mu\rightarrow 0^{+}$, thus $||u_{\mu}||^{2}_{D_{s}(\mathbb{R}^{N})}< R_{0}\rightarrow 0$ as $\mu\rightarrow 0^{+}$.
Since for every $u\in S_{a}$, by fractional Gagliardo-Nirenberg-Sobolev inequality \eqref{U3} and Sobolev inequality \eqref{int2}
\begin{align*}
0>m(a,\mu)=E_{\mu}(u_{\mu})\geq\frac{1}{2}||u_{\mu}||^{2}_{D_{s}(\mathbb{R}^{N})}
-\frac{\mu}{q}C^{q}_{N,q,s}||u_{\mu}||^{q\gamma_{q,s}}_{D_{s}(\mathbb{R}^{N})}a^{q(1-\gamma_{p,s})}-\frac{1}{2_{s}^{\ast}}S^{-\frac{2_{s}^{\ast}}{2}}_{s}||u_{\mu}||^{2_{s}^{\ast}}_{D_{s}(\mathbb{R}^{N})}\rightarrow 0
\end{align*}
as $\mu\rightarrow 0^{+}$.

\smallskip

{\bf Case 2:} $\overline{p}\leq q<2_{s}^{\ast}$. Let $\widetilde{\mu}\geq 0$ and \eqref{equ4} holds. Firstly, we show that the family of positive radial ground states $\{u_{\mu}: 0<\mu<\widetilde{\mu}\}$ is a bounded in $H^{s}(\mathbb{R}^{N})$. If $q=\overline{p}=2+4s/N$, then by Lemma \ref{Lem19} and $P_{\mu}(u_{\mu})=0$, we have
\begin{align*}
m(a,0)\geq m(a,\mu)=E_{\mu}(u_{\mu})&=\frac{s}{N}\left(||u_{\mu}||^{2}_{D_{s}(\mathbb{R}^{N})}-\frac{2\mu}{\overline{p}}\int_{\mathbb{R}^{N}} |u|^{\overline{p}}dx\right)\\
&\geq \frac{s}{N}\left(1-\frac{2\mu}{\overline{p}}C^{\overline{p}}_{N,\overline{p},s}a^{\frac{4s}{N}}\right)||u||^{2}_{D_{s}(\mathbb{R}^{N})}.
\end{align*}
If $\overline{p}<q<2_{s}^{\ast}$, by the similar arguments as above, we have
\begin{align*}
m(a,0)\geq m(a,\mu)=E_{\mu}(u_{\mu})=\frac{s}{N}\int_{\mathbb{R}^{N}} |u_{\mu}|^{2_{s}^{\ast}}dx+\frac{\mu}{q}\left(\frac{q\gamma_{q,s}}{2}-1\right)\int_{\mathbb{R}^{N}} |u_{\mu}|^{q}dx.
\end{align*}
Thus, $\{u_{\mu}\}$ is bounded in $L^{q}(\mathbb{R}^{N})\cap L^{2_{s}^{\ast}}(\mathbb{R}^{N}) $. From $P_{\mu}(u_{\mu})=0,$ we also have $\{u_{\mu}\}$ is bounded in $H^{s}(\mathbb{R}^{N}).$ Since $$\widetilde{\lambda}_{\mu}a^{2}=||u_{\mu}||^{2}_{D_{s}(\mathbb{R}^{N})}-\mu\int_{\mathbb{R}^{N}} |u_{\mu}|^{q}dx-\int_{\mathbb{R}^{N}} |u_{\mu}|^{2_{s}^{\ast}}dx=\mu(\gamma_{q,s}-1)\int_{\mathbb{R}^{N}} |u_{\mu}|^{q}dx\rightarrow 0$$ as $\mu\rightarrow 0^{+}$. Therefore $u_{\mu}\rightharpoonup u$ weakly in $H^{s}(\mathbb{R}^{N}),\ D_{s}(\mathbb{R}^{N}),\ L^{2_{s}^{\ast}}(\mathbb{R}^{N})$ and $u_{\mu}\rightharpoonup u$ strongly in $L^{q}(\mathbb{R}^{N})$, $\widetilde{\lambda}_{\mu}\rightarrow 0$. Let $||u_{\mu}||^{2}_{D_{s}(\mathbb{R}^{N})}\rightarrow \ell\geq 0$, if $\ell=0$, then $u_{\mu}\rightarrow 0$ strongly in $D_{s}(\mathbb{R}^{N})$, so $E_{\mu}(u_{\mu})\rightarrow 0.$ However, by Lemma \ref{Lem19}, we know that $E_{\mu}(u_{\mu})\geq m(a,\widetilde{\mu})>0$ for every $0<\mu<\widetilde{\mu}$, a contradiction. Thus $\ell>0$. Since $P_{\mu}(u_{\mu})=0$, we have $$\int_{\mathbb{R}^{N}} |u_{\mu}|^{2_{s}^{\ast}}dx=||u_{\mu}||^{2}_{D_{s}(\mathbb{R}^{N})}-\mu\gamma_{q,s}\int_{\mathbb{R}^{N}} |u_{\mu}|^{q}dx\rightarrow \ell,\ \ \text{as}\ \mu\rightarrow 0^{+}.$$ Therefore, by the Sobolev embedding $\ell\geq S_{s}\ell^{\frac{2}{2_{s}^{\ast}}}$, which implies that $\ell\geq S^{\frac{N}{2s}}_{s}$. On the other hand, we have
$$\frac{\ell}{N}=\lim_{\mu\rightarrow 0^{+}}\left[\frac{s}{N}||u_{\mu}||^{2}_{D_{s}(\mathbb{R}^{N})}-\frac{\mu}{q}\left(1-\frac{q\gamma_{q,s}}{2_{s}^{\ast}}\right)\int_{\mathbb{R}^{N}} |u_{\mu}|^{q}dx\right]=\lim_{\mu\rightarrow 0^{+}}E_{\mu}(u)\leq m(a,0)=\frac{s}{N}S^{\frac{N}{2s}}_{s}.$$
Thus, $\ell=S^{\frac{N}{2s}}_{s}$ and the desired conclusion follows.
\end{proof}
\section*{Acknowledgements}
B. Zhang was supported by the
National Natural Science Foundation of China (No. 11871199), the Heilongjiang Province Postdoctoral
Startup Foundation, PR China (LBH-Q18109), and the Cultivation Project of Young and Innovative Talents
in Universities of Shandong Province.

\end{document}